\documentclass[a4paper, reqno, 11pt]{amsart}

\usepackage[english]{babel}
\usepackage{amsmath, amssymb, amsthm, amsfonts, mathtools} 
\usepackage{graphicx, caption, xcolor, placeins} 
\usepackage{enumerate}
\usepackage{ifthen}
\usepackage{bbm}
\usepackage{geometry}
\provideboolean{shownotes} 
\setboolean{shownotes}{true}

\usepackage{color}

\usepackage{tikz} %% 
\usetikzlibrary{arrows.meta} %% 
\tikzset{font=\small, 
point/.style={fill, circle, inner sep=1.2pt}, % definition of points 
>={Straight Barb[round,angle=60:1.2mm 1]} % define arrows
} 

\usepackage{hyperref}
\usepackage{setspace}
\usepackage{soul}

\usepackage{xpatch}
\makeatletter
\patchcmd{\@tocline}{\hfil}
{\nobreak\leaders\hbox{\ifnum#1<2\hfill\else$\m@th%
\mkern 4.5 mu\hbox{.}\mkern 4.5 mu$\fi}\hfill\nobreak}{}{}
\def\l@section{\@tocline{1}{10pt}{1pc}{}{\bfseries}}
\def\l@subsection{\@tocline{2}{0pt}{\dimexpr 1pc+2em}{}{}}
\makeatother
\newcommand{\hole}[1]{
\ifthenelse{\boolean{shownotes}}%
{\begin{center} \fbox{ \rule {.25cm}{0cm}
\rule[-.1cm]{0cm}{.4cm} \parbox{.85\textwidth}{\begin{center}
\texttt{#1}\end{center}} \rule {.25cm}{0cm}}\end{center}}
{}
}

\newtheorem{theorem}{Theorem}[section]
\newtheorem{proposition}[theorem]{Proposition}
\newtheorem{lemma}[theorem]{Lemma}

\newtheorem{definition}[theorem]{Definition}

\allowdisplaybreaks
\theoremstyle{remark}
\newtheorem{remark}[theorem]{Remark}

\newcommand{\R}{\mathbb{R}}
\newcommand{\T}{\mathbb{T}^d}

\newcommand{\N}{\mathbb{N}}
\newcommand{\Z}{\mathbb{Z}}
\newcommand{\cR}{\mathcal{R}}
\newcommand{\cS}{\mathcal{S}}

\newcommand{\dive}{\mathop{\mathrm {div}}}
\renewcommand{\div}{\mathrm {div}}
\newcommand{\tr}{\mathrm {tr}}
\newcommand{\curl}{\mathop{\mathrm {curl}}}

\newcommand{\sgn}{\mathrm {sgn}}

\newcommand{\charf}{{{\text{\rm 1}}\kern-.24em {\text{\rm l}}}}

\newcommand{\del}{\partial}
\newcommand{\eps}{\varepsilon}
\newcommand{\Tone}{\mathbb{T}^1}
\newcommand{\cE}{\mathcal{E}}
\newcommand{\supp}{{\rm supp\,}}
\newcommand{\disxtx}{{\mathcal{D}}^\prime_{t,x,\xi}}
\newcommand{\disxt}{{\mathcal{D}}^\prime_{t,x}}

\newcommand{\dis}{{\mathcal{D}}^\prime}

\newcommand{\RR}{\mathbb{R}}
\newcommand{\Rd}{\mathbb{R}^d}

\newcommand{\tcb}{\textcolor{blue}}

\numberwithin{equation}{section}

\begin{document}

\title[Oscillations and Homogenization]{
Derivation of effective kinetic equations
\\
describing oscillations in viscoelasticity 
\\
and in compressible Navier-Stokes
}

\author[A.E. Tzavaras]{Athanasios E. Tzavaras}
\address[Athanasios E. Tzavaras]{
\newline
Computer, Electrical and Mathematical Science and Engineering Division 
\newline
King Abdullah University of Science and Technology (KAUST)
\newline 
Thuwal 23955-6900,  Saudi Arabia
}
\email{athanasios.tzavaras@kaust.edu.sa}

\baselineskip=18pt

\begin{abstract}
These lecture notes are devoted to solutions of hyperbolic-parabolic systems with persistent oscillations. 
We consider two examples both from mechanics: (i)  The system of viscoelasticity of Kelvin-Voigt type with strain energies 
involving double well potentials, as employed in phase transitions. (ii)  The compressible Navier-Stokes equations for a barotropic 
gas. For each system we construct solutions with persistent oscillations. In a later part we consider the nonlinear homogenization
problem. For the systems of viscoelasticity in one-space dimension in Lagrangian coordinates, and for 
the compressible Navier-Stokes system  for barotropic fluids we  show how
ideas from the kinetic formulation of conservation laws can be used to derive effective equations. The effective equation consists
by a kinetic equation coupled with the macroscopic flow.
\end{abstract}

\maketitle

\tableofcontents

\section{Introduction}

Solutions of Partial Differential Equations with oscillatory structures appear in several problems including
homogenization, turbulence and phase transitions. The origin and provenance of oscillations might well differ in distinct contexts
and is not at present well understood from a mathematical perspective even in the most prominent application of
turbulence. In this review we summarize certain mathematical results on the effects of regularizing mechanisms 
on the persistence of oscillations and the related problem of homogenization.  To a large extent our perspective is
focussed on problems of phase transitions, be that in solids or in liquids, and manifested through the lack of
convexity in the strain energy or the internal energy function.

The effectiveness of dissipation on smoothing of discontinuities was proposed by Dafermos \cite{Dafermos81} as 
a way of classifying hyperbolic-parabolic systems of thermomechanics and was developed in a set
of unpublished Lecture Notes \cite{Dafermos85}. The situation ranges from the full absence of dissipative
mechanisms leading to formation of shocks, to full parabolic systems that instantaneously regularize solutions,
with hyperbolic-parabolic systems in the middle with some fields regularizing while others develop singularities.
The Green function for the linearized operator around a constant state was 
demonstrated,   in the penetrating studies of Liu and Zeng \cite{LZ97,Liu97}, to be
an object capturing the efficacy of dissipation around shocks.

The situation is less understood regarding propagation of oscillations.
It is well-known that linear hyperbolic systems propagate 
oscillations from the initial-data to solutions, and  propagation and/or cancellations of oscillations results as
a consequence of nonlinear response captured by entropy inequalities. 
On the other extreme, fully-parabolic systems instantaneously smoothen initial oscillations.
Less was known for the intermediate case of hyperbolic-parabolic systems.
Examples were recently produced in \cite{Tzavaras23} and will be extensively reviewed here both for the linear case
but also nonlinear cases of  hyperbolic-elliptic systems with partial dissipation.

The program will be developed using as paradigms two canonical systems.
The Cauchy problem for nonlinear viscoelastic materials of Kelvin-Voigt type
\begin{align}\label{eq:wave}
&\partial_{tt}y-\dive  \Big (  \frac{\del W}{\del F} (\nabla y) \Big ) = \Delta \partial_{t}y \, , 
\\
\nonumber
&y|_{t=0}=y_0,\quad  \partial_{t}y|_{t=0}=v_0 \, .
\end{align}
Introducing the velocity $v = \del_t y$ and the deformation gradient $F = \nabla y$,  the second order evolution \eqref{eq:wave} 
is expressed as a hyperbolic-parabolic system
\begin{equation}\label{eq:main}
\begin{aligned}
\partial_{t}v-\dive(S(F)) &= \Delta\,v \\
\partial_{t}F-\nabla\,v&=0\\
\curl\,F&=0\, .
\end{aligned}
\end{equation}

The second system of interest is the compressible Navier-Stokes system for a barotropic fluid in Eulerian coordinates,
\begin{equation}
\label{eq:compNS}
\begin{aligned}
\del_t \rho + \dive \rho u &= 0
\\
\del_t \rho u + \dive \rho u \otimes u  + \nabla p (\rho) &=  \dive \Big ( \mu (\nabla u + \nabla u^T) + \lambda (\dive u )  \, I \, \Big ) \, ,
\end{aligned}
\end{equation}
where $\rho$ and $u$ are the density and velocity of the fluid. The constants 
$\mu$ is the shear viscosity,  $\lambda$ is called second viscosity coefficient 
and is connected to the bulk viscosity $\zeta$ through $\zeta = \lambda + \tfrac{2}{3} \mu$. Sometimes 
$\lambda$, $\mu$ are called Lam\'e coefficients and may depend on the density (and on temperature when the
latter is taken into account). Here, it will be assumed that they are constants that satisfy $\mu > 0$, $\lambda + \tfrac{2}{3} \mu > 0$. 
This system has analogies to  viscoelasticity, both conceptually and in terms of mathematical properties.
In the one-dimensional case, the system of equations
\begin{equation}
\label{intro-vhcg}
\begin{aligned}
w_t - v_x &= 0 \, ,
\\
v_t - \sigma(w)_x &= ( \frac{\mu}{w} v_x )_x  \, ,
\end{aligned}
\end{equation}
describes both the equations for longitudinal motions of a (one-dimensional) viscoelastic bar
and the equations of a compressible viscous gas in Lagrangean coordinates.

The Lecture Notes are organized as follows. In Section \ref{sec:motivation} we survey the existence
theory of nonlinear viscoelasticity of Kelvin-Voigt type. Subsequently, we outline remarks 
on the existence theory of compressible Navier-Stokes
indicating why the existence of oscillatory solutions may be anticipated. 
In fact, the persistence of oscillations in compressible Navier-Stokes was conjectured by P.L. Lions \cite{b-Lions98}
in connection to the development of the existence theory of global weak solutions
for this system.

The general problem is placed within the context of
 oscillations in quasilinear hyperbolic-parabolic systems.
Starting in Section \ref{sec:linosc} oscillatory solutions for linear systems are constructed.
Then in Section \ref{sec:phasetransitions} we outline various examples of oscillatory solutions
for quasilinear viscoelastic systems with nonconvex strain energies, which are employed in
modeling phase transitions. Solutions with persistent oscillations have been observed
in phenomena like twinning of crystals and have been studied in a context of minimization problems
 by Ball and James \cite{James81}, \cite{BJ87}. We show that they are also a special class 
 of the dynamic oscillatory solutions pursued here.

In Section \ref{sec:comprns} we focus on the compressible Navier-Stokes system with
non-monotone pressure of the type appearing in the Van-der-Waals equation of state.
The function
\begin{equation}\label{intro-ps}
\begin{aligned}
\rho (t,y) &= 
\begin{cases} 
\; \frac{a}{t^d}  \quad & \qquad \quad k t < |y| < (k + \theta) t \\[5pt]
\; \frac{b}{t^d}  \quad  & \; (k + \theta) t < |y| < (k + 1) t \\
\end{cases}
\; , \qquad k \in \mathbb{N}_0 = \{ 0, 1, 2, ... \} \, ,
\\[5pt]
u(t, y) &= \frac{y}{t} \, .
\end{aligned}
\end{equation}
is shown to form an example of solution with persistent oscillations. It exhibits a smooth velocity field 
with jump discontinuities of the density occurring across characteristics.

The presence  of persistent oscillations raises the problem of deriving effective equations for the description 
of propagating oscillations. This problem goes under the name homogenization and in the context of
compressible Navier-Stokes has received  attention by Serre \cite{Serre91}, Hillairet \cite{Hillairet07}.
The homogenization problem is there addressed using Young measures for renormalized solutions. 
Young measures are not amenable to analysis of dynamics and this has presented an obstacle 
to the further development of the homogenization theory.  An interesting connection was developed in  \cite{Hillairet07}  
between a specific natural ansatz for the Young measure of oscillations in density
 to problems of multi-phase flows. 
 
 An alternate tool to study the dynamics was 
 proposed in \cite{Tzavaras24}. It employs ideas from the kinetic formulation for scalar
 conservation laws \cite{LPT94, b-Perthame02} introducing this approach to the 
 derivation of effective equations for hyperbolic-parabolic systems. In Section \ref{sec:kineticcl} we review
  how homogenization problems are addressed using the kinetic formulation in the context
 of zero-viscosity limits for scalar conservation laws
 \begin{equation}\label{intro-sccl}
 \del_t u^n + \del_x f(u^n) = \eps_n \del^2_x u^n \, .
\end{equation}
We explain the relation between Young measures and kinetic solutions and how the latter give rise
to a transport problem and the notion of generalized kinetic solutions. In Section \ref{sec:genkinproof}, 
we present an alternative proof  of Tartar's Theorem \cite{Tartar79} on cancellation of oscillations for scalar one-dimensional conservation laws.
The proof, due to \cite{PT00}, utilizes the correspondence  (for scalar oscillating functions)
between Young measures and their distribution functions, and used the distribution function to derive information on
the support of the Young measure.

Then we focus on the main paradigm, the problem of deriving an effective equation describing 
propagating oscillations (induced by oscillatory data) for solutions $(v^\eps, u^\eps)$ 
to the system
\begin{equation}
\label{eq:onedvisco}
\begin{aligned}
u_t &= v_x
\\
v_t &= \sigma(u)_x + v_{xx}\, ,
\end{aligned}
\end{equation}
when $\sigma(u)$ is possibly nonmonotone. It is known that for non-monotone stresses oscillations may develop and persist.
As in that case (due to the viscosity) oscillations are only expected in the scalar field of the strain, this problem is a good
paradigm to test the use of ideas from the kinetic formulation for that problem.
A key point is that the weak limits of a uniformly bounded family of functions
$\{ u^\eps \}$ may be represented by the Young measures, but also by the weak-limit of the kinetic function
$$
\charf_{u^\eps < \xi} \rightharpoonup F  \quad \mbox{wk-$\ast$ in $L^\infty$}.
$$
The function $F$ can also be understood as the distribution function of the Young measure $\nu_{t,x} (\xi)$, $\xi \in \R$. We prove in Theorem \ref{mainthm}
that oscillations in solutions $(u^\eps, v^\eps)$ of \eqref{eq:onedvisco} are described by the effective system
\begin{equation}\label{intro-effs}
\begin{aligned}
\del_t F &+ \del_\xi \Big ( \big ( v_x + \overline{\sigma(u)} - \sigma(\xi) \big ) F \Big ) + \sigma^\prime (\xi) F = 0   \, ,
\\[5pt]
\del_t v &= \del_{xx} v + \del_x  \left (\overline{\sigma(u)} \right )    \, ,
\\[5pt]
S &=  \overline{\sigma(u)}  + v_x   = \int \sigma(\xi) dF_{t,x} (\xi)  + v_x  \, .   \\[5pt]
\end{aligned}
\end{equation}
The propagation of $F$ is described by a kinetic equation depending on moments as is typical in the kinetic formulation of systems
of conservation laws, \cite{LPT94, PT00}.  We indicate by means of examples that the kinetic equation \eqref{intro-effs}$_1$ 
has the anticipated response of separating phases when data are posed crossing the unstable region. This leads
to anticipate that the model \eqref{intro-effs} will have efficacy in describing the homogenization of system \eqref{eq:onedvisco}.

In Section \ref{sec:homcomNS}, we take up the problem of deriving effective equations for oscillating solutions of
 the compressible Navier-Stokes system with non-monotone pressure. We first describe the  result
of Hillairet \cite{Hillairet07}  on homogenization  for \eqref{eq:compNS} using Young measures of renormalized solutions.
Because the density is the only variable that may exhibit propagating oscillations, 
 the Young measure can be substituted by its distribution function. 
This observation is motivated by the use of kinetic functions to describe oscillations in scalar conservation laws, 
its use for \eqref{eq:compNS} is suggested in \cite{b-PS12}.
We outline two methodological approaches to derive
kinetic equations that describe the evolution of approximate solutions and their effective limits, 
see Sections \ref{sec:effmethod1}, \ref{sec:effmethod2}.  We refer to \eqref{approxeqn} 
in Section \ref{sec:effmethod1}
for the form of an approximate kinetic equation and to \eqref{homoS2} in Section \ref{sec:effmethod2} for a rendition of the homogenized system.

%\vfil\eject

\section{Existence theory of exemplar hyperbolic-parabolic systems}\label{sec:motivation}

We review elements of the existence theory first for \eqref{eq:wave} and then for  \eqref{eq:compNS}
both viewed as important and representative examples of hyperbolic-parabolic systems with singular diffusion matrices. 
Various reasons lead to conjecture that oscillations in the data partly propagate to solutions of these systems.

\subsection{The system of viscoelasticity of Kelvin-Voigt type}

The system describing the equations of nonlinear viscoelasticity in Lagrangean coordinates is
\begin{equation}\label{verate}
 \frac{\del^2 y }{\del t^2} = \div  \frac{\del W}{\del F} (F) +  \mu \Delta \frac{\del y}{\del t} \, .
\end{equation}
Here  $y(x,t)$ is a function describing the motion of a reference configuration, 
$v = \frac{\del y}{\del t}$ is the velocity and $F = \nabla y$ the deformation gradient;  note that 
$\dot F = \nabla v$.
The system is obtained by combining the balance of momentum equation $\frac{\del^2 y }{\del t^2} = \div T$,
where $T$ describes the stress tensor, with the constitutive assumption of viscoelasticity of the rate type
\begin{equation}\label{veKV}
T  = \frac{\del W}{\del F} (F) +  \mu \dot F
\end{equation}
where $\mu > 0$ is the viscosity coefficient.
This leads to the equation \eqref{verate} of viscoelasticity of Kelvin-Voigt type.
The constitutive hypothesis \eqref{veKV} is not frame indifferent 
- which is considered a  deficiency from a perspective of theoretical mechanics - 
yet offers a good starting point to understand some basic mathematical features.

The limiting case $\mu = 0$, when rate-dependence is ignored, leads to $T = \frac{\del W}{\del F} (F) $ and is
called hyperelasticity. The elastic stresses are induced by a strain-energy function $W(F)$. The
system of hyperelasticity can be expressed as a first-order system of nonlinear partial differential equations,
written in coordinates as
\begin{align}
\label{esystem}
&\begin{aligned}
&\del_t F_{i \alpha} = \del_\alpha v_i
\\
&\del_t v_i = \del_\alpha \tfrac{\del W}{\del F_{i \alpha}} (F)
\end{aligned}
\\[5pt]
&\del_\alpha F_{i \beta} - \del_\beta F_{i \alpha} = 0 \, , 
\label{econstraint}
\end{align}
where the summation convention over repeated indices is adopted.
The equation \eqref{econstraint} is concisely expressed as $\curl F = 0$ and is a constraint expressing the requirement that $F$ be a gradient
 $F = \nabla y$. The constraint is an involution along the evolution,  that is it is propagated from the initial data to the solution 
 via the kinematic relation \eqref{esystem}$_1$.
 
Analysis of hyperbolicity for the elasticity system \eqref{esystem} in dimension $d=3$ indicates that hyperbolicity is equivalent to rank-1 convexity of the 
strain energy function:
\begin{align*}
\mbox{Hyperbolicity of \eqref{esystem} } \quad &\Longleftrightarrow \quad W(F) \quad \mbox{is rank-1 convex}
\\
&\Longleftrightarrow
\frac{\del^2 W}{\del F_{i \alpha} \del F_{j \beta}} (F) \; \xi_i \xi_j \nu_\alpha \nu_\beta > 0
\quad \forall \xi \ne 0 \, , \, \nu \in \cS^2
\end{align*}
There are twelve characteristic speeds of \eqref{esystem}:  the first six are 
$\lambda_1 = \dots  = \lambda_6 = 0$ while the last six are the positive and negative square roots of the
eigenvalues of the acoustic tensor
$$
Q_{i j} = \frac{\del^2 W}{\del F_{i \alpha} \del F_{j \beta}} \nu_\alpha \nu_\beta 
$$
The eigenvalues are real provided the stored energy function is rank-one convex.

\subsubsection{Existence Theory}
Henceforth we set the viscosity $\mu =1$ and consider \eqref{verate} expressed as a system
\begin{equation}\label{vesystem}
\begin{aligned}
\del_t v &= \div \Big (  \frac{\del W}{\del F} (F) \Big ) +   \Delta v
\\
\del_t F &= \nabla v
\\
&\curl F = 0
\end{aligned}
\end{equation}
The existence theory of \eqref{vesystem} utilizes the (formal) energy identity
\begin{equation}\label{veenergy}
\begin{aligned}
    \del_t \left ( \tfrac{1}{2} |v|^2 + W(F)  \right ) -  \div \left ( v \cdot  \Big ( \frac{\del W}{\del F} (F) +  \nabla v \Big ) \right )  +  |\nabla v|^2  = 0
\end{aligned}
\end{equation}

%\tcb{Viscoelasticity of rate-type $d=1$}  - shear motions  $u = \frac{\del y}{\del x} $ shear strain
%\begin{equation*}
%\begin{aligned}
%u_t &= v_x
%\\
%v_t &=  \del_x  \sigma(u)  +      \del_x  \left (  v_{x} \right )
%\end{aligned}
%\end{equation*}
%

We place hypotheses on the stored energy function $W(F)$,  designed to
allow for stored energies that exhibit double well potentials, associated with phase transitions. We assume that $W(F)$ is
a $C^2$-function that satisfies
\begin{align}
c |F|^p - \tilde c  \le W(F) &\le C (1 + |F|^p)  \qquad \mbox{for $ p \ge 2$}
\nonumber
\\[5pt]
|DW(F)|  &\le C ( 1 + |F|^{p-1} )
\label{hypsemiconvex}
\tag {H}
\\[5pt]
W(F)  \mbox{ is  semiconvex }   &\mbox{, i.e.  $W(F) + \frac{K}{2} |F |^2$ is convex for some $K>0$} \, .
\nonumber
\end{align}
The hypothesis $p \ge 2$ is for technical convenience, but plays an important role in technical aspects of the analysis.
More important, the smoothness hypothesis in \eqref{hypsemiconvex} notably excludes potentials that blow up as $\det F \to 0$;
the inclusion of such potentials is a minimum requirement to avoid interpenetration of matter. How to include such potentials 
and to avoid interpenetration of matter is an outstanding open problem for the dynamical theory.

Several existence theorems for global solvability  for the system \eqref{vesystem} of viscoelasticity of Kelvin-Voigt type and its variants are available
in various settings and boundary conditions, 
{\it e.g.} \cite{FD97,Rybka92,Demoulini00,KLST23} and references therein. 
In the following theorem, avoiding  technical details, we summarize 
results for  the initial value problem on the torus $\mathbb{T}^d$.

\begin{theorem}\label{veweak} Let $d > 1$.
 \begin{itemize}
 \item[(i)] \cite{Rybka92,FD97} If the initial data $v_0 \in L^2 \, , F_0 \in L^p$, $p \ge 2$, there exists a global weak solution
 $(v,F)$ of \eqref{vesystem} of class
 $$
 v \in L^\infty (L^2) \cap L^2(H^1) \, , \quad F \in L^\infty (L^p) \, ,
 $$
 which satisfies the energy inequality
\begin{equation}
\label{energybb}
\int  \Big (  \tfrac{1}{2} |v|^2 + W(F) \Big ) dx  + \int_0^t \int |\nabla v|^2  \le \int \tfrac{1}{2} |v_0|^2 + W(F_0) \, .
\end{equation}

 \item[(ii)] \cite{KLST23} If  $v_0 \in L^2 \, , F_0 \in L^p \cap H^1$, $p \ge 2$, then in addition $F \in L^\infty (H^1)$. The solution is here
 (almost) what is called a strong solution. In addition, the solution is unique for dimension $d=2$
and satisfies energy conservation
\begin{equation}
\label{energyiden}
\int  \tfrac{1}{2} |v|^2 + W(F) dx  + \int_0^t \int |\nabla v|^2  = \int \tfrac{1}{2} |v_0|^2 + W(F_0)
\end{equation}
under the restrictions
$$
\begin{cases}
2 \le p & d=2
\\
2 \le p \le 4 & d=3 \, .
\end{cases}
$$

\end{itemize}

\end{theorem}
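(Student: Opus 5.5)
The plan for part (i) is a Galerkin (or time-discretization) approximation combined with the energy identity \eqref{veenergy} and a compactness argument that uses semiconvexity to pass to the limit in the nonlinear stress. Write $F = \nabla y$ and solve the approximate problem for $y$ on $\mathbb{T}^d$, projecting onto finitely many Fourier modes; this is an ODE system with locally Lipschitz right-hand side, since $W\in C^2$. The discrete energy identity, together with the coercivity $W(F)\ge c|F|^p-\tilde c$, yields uniform bounds
\[
v^n \in L^\infty(L^2)\cap L^2(H^1), \qquad F^n \in L^\infty(L^p),
\]
so the approximate solutions exist globally. From the equations, $\partial_t F^n = \nabla v^n$ is bounded in $L^2(L^2)$. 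Using $|DW(F)|\le C(1+|F|^{p-1})$, $\partial_t v^n$ is bounded in $L^{p'}(W^{-1,p'})$. Aubin--Lions then gives $v^n\to v$ strongly in $L^2(L^2)$, while $F^n\rightharpoonup F$ only weakly-$\ast$.

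The main obstacle is identifying the weak limit of $DW(F^n)$, since $W$ is not convex and $F^n$ converges only weakly. The key step is a strong-convergence estimate for $F^n$, which exploits the structure $\partial_t F = \nabla v$ together with the parabolic regularization. Test the difference of the momentum equations for $v^n$ and $v^m$ against $\nabla^{-1}$-type potentials, or more directly, compare $y^n$ and $y^m$. Since $\partial_t y = v$, the equation reads $\partial_t(v - \Delta y) = \div DW(\nabla y)$. Pairing the difference with $y^n-y^m$ and using semiconvexity,
\[
\big(DW(F^n)-DW(F^m)\big):(F^n-F^m) \ge -K|F^n-F^m|^2 ,
\]
gives a Gronwall inequality for $\tfrac12\|F^n-F^m\|_{L^2}^2$. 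The remainder terms involve $\|v^n-v^m\|_{L^2(L^2)}$, which tends to zero by the compactness above. The resulting strong $L^2$ convergence of $F^n$, combined with the $L^\infty(L^p)$ bound, upgrades to strong convergence in $L^q$ for $q<p$, so that $DW(F^n)\to DW(F)$ in $L^1$. This lets us pass to the limit in the weak formulation. The energy inequality \eqref{energybb} then follows from weak lower semicontinuity of $\int|\nabla v|^2$ and Fatou's lemma applied to $W(F^n)\ge -\tilde c$.

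For part (ii), take $F_0\in H^1$ and derive a higher-order estimate by differentiating. Set $w = v - \div^{-1}$-free combination, or more concretely, use the quantity $\nabla F - \nabla^{-1}$-corrected velocity, as in the classical effective-flux trick: $\partial_t(\Delta y) - \Delta v=0$, so $\partial_t(v-\Delta y)=\div DW(\nabla y)$. Testing with $-\Delta y$ controls $\|\nabla F\|_{L^2}^2$. The resulting term $D^2W(F)\nabla F:\nabla F$ is bounded below by $-K|\nabla F|^2$ via semiconvexity, and the cross term $\int v\cdot\Delta y$ is handled by the energy bounds. Gronwall then gives $F\in L^\infty(H^1)$.

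Uniqueness in $d=2$ follows from the same relative estimate used for strong convergence in part (i), now applied to two solutions: with $F\in L^\infty(H^1)\subset L^\infty(L^q)$ for all $q<\infty$ in 2D, the growth bounds make $DW$ locally Lipschitz in the needed norms. For energy conservation \eqref{energyiden}, mollify in $x$ and check that the commutator of $\div DW(F)$ with the mollifier paired with $v$ vanishes. This requires $DW(F)\nabla v\in L^1$, which, using $F\in L^\infty(H^1)$, Sobolev embedding, and $|DW|\lesssim 1+|F|^{p-1}$, holds for all $p$ when $d=2$ and for $p\le4$ when $d=3$ ($H^1\subset L^6$, and $|F|^{3}\in L^2$). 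I expect the Gronwall strong-convergence step for $F$ to be the delicate point, especially justifying the pairing at the Galerkin level.
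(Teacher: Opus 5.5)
Your architecture matches the paper's sketch, except that you use Galerkin where the paper uses the time-discrete minimization scheme. In (i), compactness of $F^n$ comes from pairing with $y$, using semiconvexity and Gronwall; this is the propagation-of-compactness estimate the paper attributes to Friesecke--Dolzmann, and the computation behind \eqref{veweaklimit}. In (ii), testing $\partial_t(v-\Delta y)=\div DW(\nabla y)$ with $-\Delta y=-\div F$ is the transfer-of-dissipation identity \eqref{transdiss}--\eqref{unicab}. Your integrability count for energy conservation ($DW(F)\in L^2_{t,x}$, which forces $p\le 4$ when $d=3$) also matches the stated restrictions.

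\textbf{The gap: the Cauchy estimate fails at the Galerkin level.} The $n$-th Galerkin equation $\partial_t v^n=P_n\div DW(F^n)+\Delta v^n$ holds only against test functions in the span $V_n$ of the retained modes, and $y^m\notin V_n$ for $m>n$. Suppose you test both equations with the admissible function $y^n-P_ny^m$. Then the left side controls $\|F^n-P_nF^m\|_{L^2}$ rather than $\|F^n-F^m\|_{L^2}$. Besides the term $(DW(F^n)-DW(F^m)):(F^n-F^m)$ that semiconvexity handles, you also get the remainder $\int_0^t\!\int_{\T}\big(DW(F^n)-DW(F^m)\big):(I-P_n)F^m$. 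Nothing makes this small. Controlling $(I-P_n)F^m$ uniformly in $m$ means uniform decay of Fourier tails, which is exactly the $L^2$-precompactness of $\{F^m\}$ you are trying to prove. The identity you want needs an approximation in which $DW$ acts exactly in space, so that the approximate momentum equation can be tested with arbitrary functions. Spectral truncation of the nonlinearity destroys this.

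\textbf{Possible repairs.} (a) Use the time-discrete variational scheme described in the paper. For small $h$ each step is a convex minimization whose Euler--Lagrange equation involves $DW(F)$ itself, and this is where the propagation-of-compactness estimate quoted in the paper is proved. (b) Keep Galerkin, but compare each $y^n$ with the limit rather than with $y^m$. Test the $n$-th system only with $y^n\in V_n$ and the limit equation $y_{tt}=\div(\overline{DW(F)}+\nabla y_t)$ with $y$, then subtract after passing to the limit with Young measures. Treat the $O(|F|^p)$ term $DW(F^n):F^n$ by a defect-measure argument as in Lemma~\ref{replemma}, applied to $(DW(\lambda)-DW(F)):(\lambda-F)+K|\lambda-F|^2\ge 0$; Gronwall then gives $\nu_{t,x}=\delta_{F(t,x)}$. (c) Most economically, prove (ii) first. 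Your $H^1$ estimate is legitimate for Galerkin since $\Delta y^n\in V_n$. Together with $\partial_tF^n=\nabla v^n$ bounded in $L^2(L^2)$, Aubin--Lions then gives strong compactness of $F^n$ with no semiconvexity argument. For $F_0\in L^p$ only, approximate by $H^1$ data $F_0^k\to F_0$ in $L^p$ and run your Cauchy/Gronwall estimate on the resulting genuine solutions, where pairing with $y^k-y^l$ is allowed.

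\textbf{Uniqueness needs more than the same relative estimate.} For two solutions, that estimate bounds $\tfrac12\|\delta F(t)\|_{L^2}^2$ by $\int_0^t\|\delta v\|_{L^2}^2$, by $K\int_0^t\|\delta F\|_{L^2}^2$, and by the boundary term $\int_{\T}\delta v\cdot\delta y\,dx$ at time $t$. So you still need to control $\delta v$ by $\delta F$. An $L^2$ estimate for $\delta v$ would require $\|DW(F_1)-DW(F_2)\|_{L^2}\lesssim\|\delta F\|_{L^2}$, i.e.\ $|F|^{p-2}\in L^\infty$, which $H^1$ does not give in two dimensions. What closes is an $\dot H^{-1}$ estimate, obtained by testing with $(-\Delta)^{-1}\delta v$. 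It uses that $\nabla(-\Delta)^{-1}$ maps $L^2$ into $L^4$ in 2D, and it controls the boundary term via $\|\delta v\|_{\dot H^{-1}}\|\delta F\|_{L^2}$. It needs only $\|DW(F_1)-DW(F_2)\|_{L^{4/3}}\lesssim\|\delta F\|_{L^2}$. That bound follows from $H^1\subset L^r$ for all $r<\infty$, provided $|D^2W(F)|\le C(1+|F|^{p-2})$. This is a hypothesis beyond \eqref{hypsemiconvex}, and you should state it. The paper itself only cites the uniqueness and energy-conservation results, so there is no proof there to compare against.
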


The approach to the proof of  Friesecke-Dolzmann \cite{FD97} uses the natural energy bounds obtained from \eqref{veenergy} and the hypotheses
\eqref{hypsemiconvex} on $W$ but with one important caveat outlined below. They construct solutions by a time-stepping algorithm, iteratively solving a minimization problem
\begin{itemize}
\item[]
Given $(v^0 , F^0)$ find the minimizer of 
\begin{equation*}
\min \int \tfrac{1}{2} |v-v^0|^2 + \frac{1}{2h} |F - F^0|^2 + W(F) dx
\end{equation*}
subject to the affine constraint
\begin{equation*}
\frac{F-F^0}{h} = \nabla v 
\end{equation*}
\end{itemize}
This problem is convex for $h$ sufficiently small due to the semiconvexity hypothesis.
They then build approximate solutions $y^h (t,x)$, $F^h (t,x)$, $v^h (t,x)$ via piecewise affine 
and piecewise linear interpolations, and in a last important step they show a propagation of 
compactness result
$$
\begin{aligned}
&\limsup_{h \to 0} \int_0^t\int \big |F^h (t,x) - F(t,x) \big |^2 dx dt 
 \le \left ( \limsup_{h\to 0} \int |F^h_0 - F_0|^2 dx \right ) e^{Kt}
\end{aligned}
$$
that allows to prove compactness, see \cite{FD97} and \cite{Demoulini00} that develops
similar ideas in a more complex setup.

The approach of \cite{KLST23} is based on a transfer of dissipation identity that we outline at the level
of the system with variable viscosity $\mu$:
\begin{equation*}
\begin{aligned}
\del_t v -  \div S(F) - \mu  \Delta v &=0   \qquad \eps > 0
\\
\del_t F - \nabla v &= 0
\end{aligned}
\end{equation*}
One then has the energy identity 
\begin{equation}
\label{energy-1}
\frac{d}{dt}\int  \tfrac{1}{2} |v|^2 + W(F) dx  + \mu \int |\nabla v|^2  = 0
\end{equation}

It is possible to obtain an additional estimate for periodic solutions. This is effected
by combining the compensated compactness type of bracket
\begin{equation}
\begin{aligned}
\del_t \big ( v \cdot \div F - \mu |\div F|^2 \big ) - \div ( v \cdot \nabla v )
= \div S(F) \cdot \div F  -  \nabla v : \nabla v
\end{aligned}
\end{equation}
with a scaled version of  the energy identity \eqref{energy-1} 
leading to the identity
\begin{equation}\label{transdiss}
\frac{d}{dt}\int  \tfrac{1}{2} |v - \tfrac{\mu}{2}\div F|^2  + \tfrac{\mu^2}{4} | \div F|^2 + W(F)  dx  
+ \tcb{ \tfrac{\mu}{2} } \int D^2 W : ( \nabla F, \nabla F) + |\nabla v|^2  = 0 \, .
\end{equation}
The latter manifests transfer of dissipation from momentum to the strain under improved initial
regularity of the strain, see \cite{KLST23}. Clearly for $W(F)$ convex equation \eqref{transdiss} provides
a gradient estimate for the deformation gradient.

This estimate can also be achieved for semiconvex energies \eqref{hypsemiconvex} by a small adaptation. Setting
$$
\tilde W (F) = W(F) + \frac{K}{2} |F|^2
$$
we note $\tilde W (F)$ is convex and use $\mu = 2$ to conclude
\begin{equation}
\label{unicab}
\frac{d}{dt} \int   |  v -  \dive F|^2 +  |\nabla F |^2+  2 W(F) \, dx 
+  \int  \Big (  D^2 \tilde W : (\nabla F, \nabla F) +  |\nabla v|^2 \Big ) dx \le K \int |\nabla F|^2 dx.
\end{equation}
Using Gr\"onwall's lemma, \eqref{unicab} yields control of $\int |\nabla F|^2 dx$.

Summarizing, there is an available existence theory for initial data $F_0 \in L^p$ and a second theory
for data $F_0 \in L^p \cap H^1$. The former is based on the property that strong convergence of the
initial data $F_{0n} \to F_0 \; \mbox{ in $L^p$}$ implies strong convergence of the corresponding
solutions $F_n \to F \; \mbox{in $L^p_{t,x}$}$.
The latter assumes $F_0 \in H^1$ and obtains the same regularity $F \in H^1$ for the solution.
This raises the question: what happens if $F_{0n} \rightharpoonup F_0$ weakly in $L^p$. This 
question is undertaken in later sections.

\subsubsection{Relation between existence and strong convergence of the initial data}
We show next an argument that illustrates the role of compactness of initial data. Let $y_n$ be a family of periodic, approximate solutions generated
by the equations
\begin{equation}
\label{appreqn}
y^n_{t t} = div_x \left ( \frac{\del W}{\del F} (\nabla y^n) + \nabla y^n_t \right ) + f_n  \, ,  \qquad (t,x) \in (0,T) \times \T \, ,
\end{equation}
where $f_n \to f$ in $L^2_{t, x}$,  and suppose the initial data are uniformly bounded in energy,
$$
\int  \tfrac{1}{2} |v_0^n|^2 + W(F_0^n) dx  \le C \, .
$$
Assume that $W(F)$ is strictly convex and that $p > 2$.
The sequence  $(v^n , F^n)$ of approximate solutions are uniformly bounded in energy by \eqref{energybb} which together
with the Aubin-Lions lemma and the representation theorem via Young measures \cite{Ball89} implies that
along a subsequence $(v_n, F_n)$ satisfies
$$
\begin{aligned}
v^n \to v  \; \mbox{ in $L^2$}   \qquad &F^n \rightharpoonup F \;  \; \mbox{wk-$\ast$ in $L^\infty(L^p)$}
\\
g(F^n)  \rightharpoonup  \int g(\lambda_F) \,  d\nu_{t,x} (\lambda_F)  =:  \overline{g(\lambda_F)}  
\quad &\mbox{ for $g$ continuous s.t. $\lim_{|\xi|\to\infty} \frac{g(\xi)}{ 1 + |\xi|^p} = 0$}
\end{aligned}
$$
Here, we used the notation $\nu_{t,x}$ for the
parametrized family of probability measures (called Young measures) that represent the weak convergence
properties of (a subsequence of) $\{ F^n\}$.

We multiply the equation by $y^n$, integrate on the torus $\T$ and by parts to obtain
$$
\int_{\T} (v^n)^2 - \int_{\T} \left ( \frac{\del W}{\del F} (\nabla y^n) + \nabla y^n_t \right ) \cdot \nabla y^n + \int_{\T} f^n y^n = 
\del_t \int_{\T} y^n v^n
$$
which gives
\begin{equation}\label{form101}
\begin{aligned}
&\int_0^t \int_{\T} (v^n)^2  - \int_0^t \int_{\T}  \frac{\del W}{\del F} ( F^n)  \cdot F^n + \int_0^t \int_{\T} f^n y^n 
\\
&\quad = \int_{\T} \left [ \tfrac{1}{2} |F^n|^2 (x,t) + (y^n v^n)(x,t) \right ] dx - \int_{\T} \left [ \tfrac{1}{2} |F_0^n|^2 (x) + (y_0^n v_0^n)(x) \right ] \, dx 
\end{aligned}
\end{equation}
We want to pass to the limit $n \to \infty$. The difficulty arises that the function $\frac{\del W}{\del F}(F) \cdot F$, under the hypothesis 
\eqref{hypsemiconvex}, has growth of order $O(|F|^p)$ and its weak limit cannot be represented via the usual Young measure Theorem.

To bypass this difficulty we will use a variant of a representation result from \cite[App A.2]{DST12} that we state and prove below.
Let $F(t,x)$ be a given matrix valued function, $\lambda_F$ a matrix and define 
$$
\phi(\lambda_F, F) = \Big (\frac{\del W}{\del F} (\lambda_F) - \frac{\del W}{\del F} (F) \Big ) : (\lambda_F - F ) 
$$
Observe $\phi(\lambda_F, F) \ge 0$ and by \eqref{hypsemiconvex} we have $|\phi(\lambda_F, F)| \le C \big ( |\lambda_F|^p + |F|^p + 1)$.
We prove the following representation theorem.

\begin{lemma}\label{replemma}
There exists a positive measure $\mu(dx dt) \ge 0$ so that
$$
\phi (F^n , F) 
\rightharpoonup \int  \phi(\lambda_F, F) \,  d\nu_{t,x} (\lambda_F)  + d\mu (dx dt)
$$
in the sense of measures, where the meaning of the integral is defined in the proof.
\end{lemma}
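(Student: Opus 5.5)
The plan is a truncation argument. It splits $\phi(F^n,F)$ into a part with subcritical growth, to which the fundamental theorem on Young measures applies, and a tail that is nonnegative and uniformly bounded in $L^1$; the tail produces the defect measure $\mu$. Throughout, $F$ is a fixed function in $L^\infty(L^p)$ (the weak limit), so $\phi(\lambda_F,F(t,x))$ is a Carath\'eodory integrand in $(t,x,\lambda_F)$. It is nonnegative by the convexity assumed here (for merely semiconvex $W$ one would add $K|\lambda_F-F|^2$, which is handled the same way). It also satisfies $0\le \phi(\lambda_F,F)\le C(|\lambda_F|^p+|F|^p+1)$.

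\emph{Step 1 (truncation).} For $R>0$ let $\chi_R\in C_c(\R^{d\times d})$ with $0\le\chi_R\le 1$, $\chi_R=1$ on $\{|\lambda|\le R\}$ and $\chi_R=0$ on $\{|\lambda|\ge R+1\}$, chosen increasing in $R$. Write
$$
\phi(F^n,F)=\chi_R(F^n)\,\phi(F^n,F)+(1-\chi_R(F^n))\,\phi(F^n,F).
$$
The first term is a Carath\'eodory function of $F^n$ whose growth in $\lambda$ is compactly supported, with coefficients involving $F\in L^p$. By the Young measure representation recalled above (Ball's theorem, in its Carath\'eodory version, tested against $\psi\in C_c$), it converges weakly in $L^1_{loc}$ to $\int\chi_R(\lambda_F)\phi(\lambda_F,F)\,d\nu_{t,x}(\lambda_F)$. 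Some care is needed since $F$ is only $L^p$; I would use Scorza--Dragoni/Egorov on $F$ or approximate $F$ by continuous functions in $L^p$. The second term is nonnegative and bounded in $L^1_{t,x}$ by the energy bound \eqref{energybb}. Hence, along a diagonal subsequence, it converges weak-$\ast$ in measures to some $\mu_R\ge 0$.

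\emph{Step 2 ($R\to\infty$).} Since $\phi\ge0$ and $\chi_R\uparrow 1$, monotone convergence gives
$$
\int\chi_R(\lambda_F)\phi(\lambda_F,F)\,d\nu_{t,x}\uparrow\int\phi(\lambda_F,F)\,d\nu_{t,x}=:\overline{\phi}.
$$
This is the meaning of the integral, possibly $+\infty$ a priori. Fatou's lemma and the uniform $L^1$ bound show $\overline\phi\in L^1$: for $\psi\ge0$ in $C_c$, $\int\psi\,\overline\phi\le\liminf_n\int\psi\,\phi(F^n,F)\le C$. The weak-$\ast$ limit $\Phi$ of $\phi(F^n,F)$ itself is independent of $R$. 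Therefore $\mu_R=\Phi-\int\chi_R\phi\,d\nu$ is a nonincreasing family of nonnegative measures, and it converges to $\mu:=\Phi-\overline\phi\,dx\,dt\ge0$. This gives the claimed representation.

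The main obstacle is Step 1: justifying the Young measure representation when the integrand depends on $(t,x)$ through the merely $L^p$ function $F$, with growth up to $|F|^p$. I would handle this by expanding
$$
\phi(\lambda_F,F)=DW(\lambda_F):\lambda_F-DW(\lambda_F):F-DW(F):\lambda_F+DW(F):F.
$$
The last term does not depend on $n$. The middle terms are products of a fixed function in $L^{p}$ (respectively $L^{p/(p-1)}$) with $DW(F^n)$ (respectively $F^n$), which are bounded in $L^{p/(p-1)}$ (respectively $L^p$) and equi-integrable after truncation. Only $DW(F^n):F^n$ genuinely requires the truncation and concentration analysis above.
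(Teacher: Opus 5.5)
Your proof is correct and follows essentially the paper's route: truncate, represent the truncated part by the Young measure, define $\int\phi(\lambda_F,F)\,d\nu_{t,x}(\lambda_F)$ by monotone convergence, and get $\mu\ge 0$ from the nonnegativity of the discarded tail. The differences are minor. The paper caps the value, $\phi_R=\min(\phi,R)$, so the truncated sequence is bounded by $R$ and converges weak-$\ast$ in $L^\infty$, whereas your cutoff $\chi_R(\lambda_F)$ uses the domination by $C(R^p+|F|^p+1)$ together with the Carath\'eodory form of the Young measure theorem. You also prove explicitly, via Fatou, that $\int\phi\,d\nu_{t,x}\in L^1$; the paper needs this for its $L^1$ bound on $\phi(F^n,F)-\int\phi\,d\nu_{t,x}$ but obtains it only implicitly from its final chain of inequalities.
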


\begin{proof}
First, we define $\int  \phi(\lambda_F, F) \,  d\nu_{t,x} (\lambda_F)$. Let $R > 0$ and consider the truncated functions
$$
\phi_R (\lambda_F, F) = 
\begin{cases}
\phi(\lambda_F, F)  & \mbox{if}  \; \;  \phi(\lambda_F, F) < R \\
R  & \mbox{if}  \; \;  \phi(\lambda_F, F) \ge R  \\
\end{cases}
$$
Then $0 \le \phi_R (\lambda_F, F) \nearrow \phi(\lambda_F, F)$ a.e. on $Q_T := (0,T) \times \T$ and the monotone convergence Theorem 
implies
\begin{equation}\label{defnulimit}
\int  \phi(\lambda_F, F) \,  d\nu_{t,x} (\lambda_F)  = \lim_{R \nearrow \infty} \int \phi_R (\lambda_F, F) d\nu_{t,x} (\lambda_F)  \quad \mbox{a.e. $(t,x)$}.
\end{equation}
Observe that $0 \le \phi_R (F^n, F) \le R$ and the representation of weak limits via Young measures implies
that along a subsequence $\{ F^n \}$ we have 
$$
\phi_R (F^n, F) \rightharpoonup \int  \phi(\lambda_F, F) \,  d\nu_{t,x} (\lambda_F) \, ,  \quad \mbox{weak-$\ast$ in $L^\infty$}
$$
Formula \eqref{defnulimit} serves as the definition of the representation of the weak limit.

Consider next the sequence $\Big \{ \phi(F^n , F) - \int  \phi(\lambda_F, F) \,  d\nu_{t,x} (\lambda_F) \Big \}$ and observe that
$$
\sup_n \int_{Q_T} \Big | \phi(F^n , F) - \int  \phi(\lambda_F, F) \,  d\nu_{t,x} (\lambda_F) \Big | dt dx \le C
$$
Therefore, there exists a bounded measure $\mu (dx dt)$ such that
$$
\int_{Q_T} \Big (  \phi(F^n , F) - \int  \phi(\lambda_F, F) \,  d\nu_{t,x} (\lambda_F) \Big ) \theta (t, x)  dt dx  \to
\int_{Q_T} \theta(t,x) \mu (dt dx) 
$$
for all $\theta \in C (\overline{Q_T})$. Finally,  for $\theta(t,x) \ge 0$,  the chain of inequalities 
$$
\begin{aligned}
\int_{Q_T} \Big (  \int  &\phi(\lambda_F, F) \,  d\nu_{t,x} (\lambda_F) \Big ) \theta(t,x) dt dx 
\\
&=
\sup_{R>0} 
\int_{Q_T} \Big (  \int  \phi_R (\lambda_F, F) \,  d\nu_{t,x} (\lambda_F) \Big ) \theta(t,x) dt dx 
\\
&\le \sup_{R > 0} \lim_{n \to \infty} \int_{Q_T} \phi_R (F^n, F ) \theta (t, x) dt dx
\\
&\le \lim_{n \to \infty} \int_{Q_T} \phi ( F^n, F) \theta(t,x) dt dx
\end{aligned}
$$
implies that $\mu(dt dx) \ge 0$ as asserted.
\end{proof}

Using the representation lemma \ref{replemma} and letting $n \to \infty$ in \eqref{form101} gives
$$
\begin{aligned}
&\int_0^t \int_{\T} v^2  - \int_0^t \int_{\T}  \Big ( \overline{\big ( \frac{\del W}{\del F} (\lambda_F) - \frac{\del W}{\del F} (F) \big ) : (\lambda_F - F ) } 
 + \overline{\frac{\del W}{\del F} ( \lambda_F) } :  F \Big )  dx dt 
- \iint d\mu (dx dt) 
\\
&\quad + \int_0^t \int_{\T} f y  = \int_{\T} \left ( \overline{\tfrac{1}{2} |\lambda_F|^2 }  + (y v ) \right ) dx -  \limsup_{n \to \infty} \int_{\T}   \tfrac{1}{2} |F_0^n|^2 \, dx
- \int_{\T} y_0 v_0 dx 
\end{aligned}
$$

Consider next the weak form of the limiting equation to \eqref{appreqn}, this is what is often called measure-valued solution.
It has the form:
\begin{equation}
y_{t t} = \div_x \left ( \overline{\frac{\del W}{\del F} (\lambda_F) } + \nabla y_t \right ) + f  \, ,  \qquad (t,x) \in (0,T) \times \T \, .
\end{equation}
We multiply by $y$ and perform the analogous steps to obtain
$$
\begin{aligned}
&\int_0^t \int_{\T} v^2  - \int_0^t \int_{\T}  \overline{\frac{\del W}{\del F} ( \lambda_F) } : F dx dt + \int_0^t \int_{\T} f y 
\\
&\quad = \int_{\T} \left ( \tfrac{1}{2} |F|^2(x,t)  + (y v )(x,t) \right ) dx - \int_{\T} \left ( \tfrac{1}{2} |F_0|^2 + y_0 v_0 \right )dx 
\end{aligned}
$$

By comparing the identities obtained for the approximate family $(v^n, F^n)$ and for the measure valued solution $\big ( (v, F); \nu_{(t,x)} \big )$  
one obtains after some cancellations that
\begin{equation}
\label{veweaklimit}
\begin{aligned}
\int_0^t \int  \overline{\big ( \frac{\del W}{\del F} (\lambda_F) - \frac{\del W}{\del F} (F) \big ) : (\lambda_F - F ) }  \, dx \, dt
&+ \tfrac{1}{2} \int \overline{ |\lambda_F - F|^2 } dx + \iint \mu (dx dt)
\\
&=
\limsup_{n\to \infty} \int _{\T}   \tfrac{1}{2}  |F_0^n - F_0|^2 dx
\end{aligned}
\end{equation}
If the initial data  converge strongly $F_0^n \to F_0$ in $L^2$
then $\nu_{(0, x)} = \delta_{F_0(x)}$ and \eqref{veweaklimit} for  $W(F)$ strictly convex implies that $\nu_{(t,x)} = \delta_{F(t,x)}$ for a.e. $(t,x)$. 
The converse is also clear,
that is if $\nu_{(t,x)} = \delta_{F(t,x)}$ and the concentration measure $\mu (dx dt) = 0$ then the initial data converge strongly in $L^2$.

In fact, something even stronger is proved in  \cite{KLST23}: 
Even when $W(F)$ is semiconvex satisfying \eqref{hypsemiconvex}, if the initial data $F_0^n$ converge strongly to $F_0$ that
implies strong convergence of solutions.

\subsection{Finite-energy weak solutions for the compressible Navier-Stokes system}

The compressible Navier-Stokes system 
\begin{equation}
\label{eq:compNS1}
\begin{aligned}
\del_t \rho + \dive \rho u &= 0
\\
\del_t \rho u + \dive \rho u \otimes u  + \nabla p (\rho) &=  \dive \Big ( \mu (\nabla u + \nabla u^T) + \lambda (\dive u )  \, I \, \Big )
\end{aligned}
\end{equation}
describes the evolution of $(\rho, u)$,  where $\rho$ and $u$ are the density and velocity of the fluid,
of a compressible, barotropic gas with pressure $p (\rho)$. 
The pressure might be monotone, but in applications there is 
interest in non-monotone pressures like in case of a Van der Waals type pressure.

The parameters $\mu, \lambda$ are called shear and bulk viscosity. They satisfy conditions that ensure
the viscous stress which is isotropic is also dissipative. The viscous stress $S$ may be expressed via the symmetric part of the strain rate
$D = \tfrac{1}{2} (\nabla u + \nabla u^T )$ as 
$$
\begin{aligned}
S &= \mu (\nabla u + \nabla u^T ) + \lambda (\div u ) I
\\
&= 2 \mu D + \lambda (\tr D ) I
\end{aligned}
$$
Accordingly, the viscous dissipation reads
$$
\begin{aligned}
S : \nabla u = S : D &= 2 \mu |D|^2 + \lambda (\tr D)^2
\\
&= 2 \mu \big ( D - \tfrac{1}{3} (\tr D) I \big ) : \big ( D - \tfrac{1}{3} (\tr D) I \big ) + \big ( \lambda + \tfrac{2\mu}{3} \big ) (\tr D)^2  \, .
\end{aligned}
$$
Therefore, the viscous dissipation $S : D > 0$ for all $D \ne 0$ 
if and only if $\mu >0$ and $\lambda +  \tfrac{2\mu}{3} > 0$. (This condition is replaced by $\mu >0$ and $\lambda +  \tfrac{2\mu}{d} > 0$
for dimension $d$.)

The Cauchy problem is considered with either Dirichlet or periodic boundary conditions, and initial conditions for $x \in \Omega$,
\begin{equation}\label{ID}
\rho(0,x) = \rho_0 (x) \, , \quad u(0,x) = u_0(x) \, . 
\end{equation}
The solution $(\rho, u)$ is sought in $Q_T = (0,T) \times \Omega$. Local existence of classical solutions and 
global existence of small perturbations of constant states are classical results.
A global existence result for weak solutions for initial densities close to a constant was proved in \cite{Hoff95}.
Global weak solutions that dissipate the energy were constructed in the celebrated results of
\cite{b-Lions98}, \cite{b-Feireisl04}.

The equations \eqref{eq:compNS1} are equipped with the (formal) energy estimate
$$
\frac{d}{dt} E [\rho, \rho u] (t) + \int_\Omega \mu |\nabla u|^2 + (\lambda + \mu) (\div u )^2 dx = 0
$$
where the energy functional is given by
$$
E [\rho, \rho u] (t)  = \int_\Omega \rho |u|^2 + h (\rho) dx \, .
$$
The energy identity is obtained by multiplying respectively \eqref{eq:compNS1}$_1$ by $h^\prime (\rho)$ and \eqref{eq:compNS1}$_2$ by $u$ and adding the results.
The internal energy per unit volume $h(\rho)$ is connected to the pressure $p(\rho)$ by the thermodynamic relation
$$
p(\rho) = \rho h^\prime (\rho) - h (\rho) \, .
$$

The case of the isentropic gas law
\begin{equation}\label{isengas}
p(\rho) = \rho^\gamma \, , \quad h(\rho) = \tfrac{1}{\gamma -1} \rho^\gamma \, , \quad \mbox{ and $\gamma > 1$}.
\tag{IG}
\end{equation}
has provided a well studied  example for which there exists an existence theory of global weak solutions, usually called
the Lions-Feireisl theory. The theory was first developed for various initial-boundary value problems in the book of 
Lions \cite{b-Lions98} with significant improvements obtained by Feireisl-Novotn\'y{}-Petzeltov\'a \cite{FNP02} and Feireisl \cite{b-Feireisl04}.
Consider an isentropic gas law \eqref{isengas} with Dirichlet boundary conditions
$u = 0$ on $\del \Omega$, or periodic boundary conditions $\Omega = \T$.

\begin{definition}\label{def:fews}
A pair $(\rho, u)$ on $Q_T$ is called a finite energy weak solution if 
\begin{itemize}
\item [(i)] $\rho \in L^\infty \big ( (0,T) ; L^\gamma (\Omega) \big )$, $u \in L^2 \big ( (0,T) ; H_0^1 (\Omega) \big )$
\item [(ii)]  $(\rho, u)$ satisfies \eqref{eq:compNS1} in distributions
\item[(iii)] $\rho$ is a renormalized solution of the transport equation, that is 
$$
\del_t b(\rho) + \div_x ( b(\rho) u) + \big ( \rho b^\prime(\rho) - b(\rho) \big ) (div_x u ) = 0 \, , \quad \mbox{in $\mathcal{D}^\prime(Q_T)$}
$$
where $b \in C^1 ([0, \infty))$ such that $b^\prime(\xi) = 0$ for $\xi$ sufficiently large.

\item[(iv)]  $E [\rho, \rho u] (t)  \in L^1_{loc} (0,T)$ satisfies
$$
\frac{d}{dt} E [\rho, \rho u] (t) + \int_\Omega \mu |\nabla u|^2 + (\lambda + \mu) (\div u )^2 dx \le 0 \, , \quad \mbox{in $\mathcal{D}^\prime (0,T)$}.
$$
\end{itemize}
\end{definition}

The existence result in \cite{b-Lions98,FNP02,b-Feireisl04} is stated:

\begin{theorem} Let $p(\rho)$ satisfy \eqref{isengas} with $\gamma > \frac{3}{2}$ and assume $\rho^0 \in L^\gamma (\Omega)$ and $m^0$
satisfy $\rho^0 \ge 0$
$$
\begin{aligned}
m^0 (x) = 0 \; \; &\mbox{whenever} \; \; \rho^0 (x) = 0 
\\
\frac{|m^0|^2}{\rho^0} &\in L^1 (\Omega)
\end{aligned}
$$
Then there exists a global finite energy solution of \eqref{eq:compNS1}, \eqref{ID} with Dirichlet or periodic boundary conditions.
\end{theorem}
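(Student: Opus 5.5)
The plan follows the Lions--Feireisl approximation scheme: build smooth approximate solutions, derive uniform bounds, and pass to the limit. The only hard part is strong convergence of the density.

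\emph{Approximation and uniform bounds.} I would regularize \eqref{eq:compNS1} in three ways. Add artificial viscosity $\eps \Delta \rho$ to the continuity equation. Add an artificial pressure $\delta \rho^\beta$ with $\beta$ large. Truncate by Faedo--Galerkin in the velocity. For fixed parameters one solves the parabolic continuity equation with the velocity given and closes the problem by a fixed point argument. The energy inequality of Definition~\ref{def:fews}(iv) (with the extra $\delta$-terms) then gives bounds uniform in all parameters:
\[
\rho \in L^\infty(L^\gamma), \qquad \sqrt{\rho}\, u \in L^\infty(L^2), \qquad u \in L^2(H^1_0).
\]
Next I would improve integrability of the pressure. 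Test the momentum equation with $\mathcal{B}[\rho^\theta - \langle \rho^\theta\rangle]$, where $\mathcal{B}$ is the Bogovskii operator (or $\nabla\Delta^{-1}$ in the periodic case). This yields $\rho \in L^{\gamma+\theta}_{t,x}$ for some $\theta>0$, say $\theta = \tfrac{2}{3}\gamma - 1$ in $d=3$. This bound is what lets $p(\rho^n)$ converge weakly in $L^1$ rather than only as a measure. The condition $\gamma > 3/2$ is needed so that $\rho u\otimes u$ is equi-integrable and the convective term passes to the limit. Compactness in time of $\rho$ and $\rho u$ in weak topologies follows from the equations, using Aubin--Lions for $\rho u$ in $C(L^{2\gamma/(\gamma+1)}_{weak})$ and in $L^2(H^{-1})$. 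Together these give $\rho u\otimes u \rightharpoonup \rho u \otimes u$.

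\emph{Strong convergence of $\rho$ (the main obstacle).} The only unidentified limit is $\overline{p(\rho)}$, and this is where the oscillation discussion of this section becomes relevant. I would use the effective viscous flux identity. Test the approximate and limit momentum equations with $\nabla\Delta^{-1}[T_k(\rho)]$, where $T_k$ is a truncation. The div--curl lemma, in the form of commutator estimates for Riesz transforms, then gives
\[
\overline{\big(p(\rho) - (\lambda+2\mu)\div u\big) T_k(\rho)} = \big(\overline{p(\rho)} - (\lambda+2\mu)\div u\big)\overline{T_k(\rho)} .
\]
I would then use the renormalized equation (iii), which holds for the limit by DiPerna--Lions because $\rho\in L^2$ when $\gamma\ge 9/5$. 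For smaller $\gamma$ one needs Feireisl's oscillation defect measure bound
\[
\sup_k \limsup_n \|T_k(\rho^n)-T_k(\rho)\|_{L^{\gamma+1}} < \infty,
\]
which makes the renormalization available. Writing renormalized equations for $L_k(\rho)$ with $L_k'(z)z-L_k(z)=T_k(z)$, and comparing $\overline{L_k(\rho)}$ with $L_k(\rho)$, the flux identity combined with monotonicity of $p$ gives
\[
\int \big(\overline{L_k(\rho)}-L_k(\rho)\big)(t) \le \int \big(\overline{L_k(\rho_0)}-L_k(\rho_0)\big) + o_k(1) = o_k(1).
\]
The right-hand side vanishes as $k\to\infty$ because the initial data converge strongly. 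Letting $k\to\infty$, $\overline{\rho\log\rho}=\rho\log\rho$, and strict convexity forces $\rho^n\to\rho$ strongly.

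This is the analogue of \eqref{veweaklimit}: oscillations are not created when none are present initially. It works \emph{only} because $p$ is monotone, which is exactly what fails for Van der Waals pressures. Removing the parameters in the order $n\to\infty$, $\eps\to 0$, $\delta\to 0$, with the same argument at each stage, completes the construction.
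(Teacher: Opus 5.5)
Your outline is the standard Lions--Feireisl--Novotn\'y--Petzeltov\'a scheme, which is what the paper relies on: it gives no proof of its own and defers to \cite{b-Lions98,FNP02,b-Feireisl04} and \cite{b-NS04}. The essential steps are all correctly placed: energy bounds, the Bogovskii estimate with $\theta=\tfrac{2}{3}\gamma-1$, the effective viscous flux identity, the oscillation defect measure for $\gamma<9/5$, and the $L_k$/$\rho\log\rho$ argument using monotonicity of $p$ and strongly converging data.

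The one imprecision is the claim that each parameter limit uses ``the same argument''. At the Galerkin level, compactness of $\rho$ comes from the $\eps\Delta\rho$ regularization, and the momentum equation needs the extra term $\eps\nabla\rho\cdot\nabla u$ to keep the energy inequality. For $\eps\to0$, the artificial pressure already gives $\rho\in L^{\beta+1}$, so truncations and the defect-measure bound are needed only in the final limit $\delta\to0$.
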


The details of the Lions-Feireisl theory are not within the scope of this work, and, besides the aforementioned references, the reader is referred 
to the book by  Novotn\'y{} and Stra\v skraba \cite{b-NS04}. The work of Bresch-Jabin \cite{BJ18} contains a very nice and concise outline of the key aspects
of the Lions-Feireisl theory, and provides extensions and quantitative compactness estimates for general pressure laws
$p = p(\rho)$ including non-monotone pressures.

The Lions-Feireisl existence theory establishes compactness properties for finite energy approximate solutions
$(\rho^n , u^n)$. A key ingredient is a propagation of compactness result, initially formulated by Lions, that indicates that if the initial data for the density
converge strongly $\rho^n (x,0) \to \rho (x,0)$ then this 
property transfers to solutions $\rho^n (\cdot, t) \to \rho (\cdot, t)$; see \cite[Sec 5.2]{b-Lions98}.
It is also noted in Remark 5.9 of \cite[Sec 5.2]{b-Lions98} that (for $p(\rho) = \rho^\gamma$ in the whole space $\R^d$) if the weak limit $(\rho, u)$ is a solution of compressible Navier-Stokes then necessarily $\rho_0^n$ converges strongly to $\rho_0$. 

Lions raised the issue of propagation of oscillations and homogenization \cite[Rmk 5.8]{b-Lions98}. He constructs  an exact periodic solution of the 
problem
\begin{equation}
\begin{aligned}
\del_t \rho + \dive \rho u &= 0
\\
\dive \Big ( \mu (\nabla u + \nabla u^T) + \lambda (\dive u )  \, I \, \Big ) - \nabla \rho^\gamma &=0
\\
\int _{\T } \rho u dx = \int_{\T} m_0 dx
\end{aligned}
\end{equation}
in $\R^d \times [0,\infty)$ with $m_0$ given, and uses it to produce a sequence $(\rho^n, u^n)$ of periodic solutions
with persistent oscillations of a forced variant of \eqref{eq:compNS1} 
with $p(\rho) = \rho^\gamma$.

An example of a sequence $(\rho^n, u^n)$ with persistent oscillations for \eqref{eq:compNS1} with non-monotone pressure
is constructed in \cite{Tzavaras24}. The example consists of uniform shear flows that have continuous velocities 
but discontinuous densities, 
jumping between different phases across characteristic surfaces,  see Section \ref{subseq:CNS}.

%
%
%

%\vfil\eject

\section{Oscillations in Linear Hyperbolic-Parabolic Systems}\label{sec:linosc}

In this section we study oscillations in linear hyperbolic-parabolic systems. 
 The form of solutions is motivated by examples that arise
in the theory of thermoviscoelasticity and relate to the mechanical interpretation of such models.

\subsection{Linear hyperbolic systems}
Linear hyperbolic systems (in 1-d for simplicity)
$$
\del_t U + A \del_x U = 0
$$
admit oscillatory solutions of the form
$$
U(t,x) = \xi e^{ i n \big ( x - \lambda t \big ) }
$$
where $\lambda$ are the characteristic speeds, defined as the eigenvalues of the problem
$$
(A - \lambda I ) \xi = 0 \, .
$$
Oscillations persist for linear hyperbolic systems, they are induced by the initial data, and they travel on characteristic directions.

\subsection{Linear hyperbolic-parabolic systems}
By contrast, when the viscosity matrix $B > 0$ of a hyperbolic-parabolic system
\begin{equation}\label{hyperpara}
\del_t U + A \del_x U = B \del^2_x U
\end{equation}
then oscillations are suppressed.

Consider now the question:  What happens for the case of Hyperbolic-Parabolic Systems with $B$ singular?
We focus on examples of systems with singular diffusion matrices 
originating from thermomechanics.

\medskip
\subsubsection{An illustrating example} 
\label{sec:ex1d}
The special case of linear viscoelasticity in one-space dimension is the equation
\begin{equation}
\label{linearvisc}
u_{tt} = \lambda u_{xx} + \mu u_{t x x}  \qquad x \in (-\pi, \pi) \, , t > 0 \, ,
\end{equation}
where $u : (0,T)\times \Tone \to \R$. This equation is closely related to the so called telegraph equation.

We explore an ansatz of the form 
$$
u(t,x) = \frac{1}{n} \alpha(t) e^{ i n x} 
$$
and examine conditions for existence of such solutions with high frequency $n$.
The amplitude $\alpha (t)$ needs to satisfy
\begin{equation}
\label{exode1}
\frac{d^2\alpha}{d t^2} + \mu n^2 \frac{d \alpha}{d t} + n^2 \lambda \alpha = 0 \, ,
\end{equation}
where the associated characteristic polynomial  $\rho^2 + \mu n^2 + \lambda n^2 = 0$ has roots
$$
\rho_\pm = \frac{\mu n^2}{2} \left ( - 1 \pm \sqrt{ 1 - \frac{4 \lambda}{\mu^2 n^2}} \right ) \, .
$$
These eigenvalues are real for $n$ large, and enjoy the asymptotic expansions
$$
\begin{aligned}
\rho_-  &= - \mu n^2 + \frac{\lambda}{\mu} + \frac{\lambda^2}{\mu^3 n^2} + O \big ( \tfrac{1}{n^4} \big)
\\
\rho_+  &= - \frac{\lambda}{\mu} - \frac{\lambda^2}{\mu^3 n^2} + O \big ( \tfrac{1}{n^4} \big)
\end{aligned}
$$
They give rise to solutions $\alpha_\pm = e^{\rho_\pm t}$ for \eqref{exode1} and $u_{n , \pm} = \frac{1}{n}  e^{ i n x + \rho_\pm t}$ for \eqref{linearvisc}.
Of interest here is the solution associated to $\rho_+$ and we consider $(u_n, v_n)$ where 
\begin{equation}
\label{beh1}
u_{n}  (t,x) =  \frac{1}{n} \exp \left \{   i n x   - \frac{\lambda}{\mu} t  - \frac{\lambda^2}{\mu^3 n^2} t + O \big ( \tfrac{1}{n^4}   \big)  t   \right \}
\end{equation}
and $v_n = \frac{\del u_n }{\del t}$.   Note that $u_n \, ,  v_n \to 0$ as $n \to \infty$.  By contrast,
\begin{equation}
\label{beh2}
\begin{aligned}
\frac{\del u_n}{\del x}  &= i  \left ( 1 + O \big ( \frac{1}{n^2} \big ) \right ) \exp \left \{   i n x   - \frac{\lambda}{\mu} t   \right \}
\\
\frac{\del v_n}{\del x} &=  i  \left (  - \frac{\lambda}{\mu}  + O \big ( \frac{1}{n^2} \big ) \right ) \exp \left \{   i n x   - \frac{\lambda}{\mu} t   \right \}
\end{aligned}
\end{equation}
exhibit  persistent oscillations induced by the initial data.

To put this example into the perspective, note that \eqref{linearvisc} may be expressed as the hyperbolic-parabolic system,
\begin{equation}
\label{exode2}
\begin{aligned}
\frac{\del w}{\del t}   &= \; \frac{\del v}{\del x}
\\
\frac{\del v}{\del t} &= \lambda \frac{\del w}{\del x} +  \mu \frac{\del^2 v }{\del x^2} 
\end{aligned}
\end{equation}
with $w = \frac{\del u}{\del x}$. 
The oscillation in \eqref{beh2} reflects a balance between the terms $w$ and $\frac{\del v}{\del x}$
and is stationary in time.

It should be contrasted with the usual oscillatory solutions of hyperbolic systems. Recall, that the hyperbolic model
\begin{equation}
\label{exode4}
\begin{aligned}
\frac{\del w}{\del t}   &= \; \frac{\del v}{\del x}
\\[3pt]
\frac{\del v}{\del t} &= \lambda \frac{\del w}{\del x} 
\end{aligned}
\end{equation}
has oscillatory solutions propagating in the directions of the wave speeds
$$
\begin{bmatrix}
w  \\[5pt]  v  
\end{bmatrix} (t,x) 
= \xi_\pm  \exp  \big \{  i n (x - \pm{\sqrt{\lambda}} t  )  \big \}
$$
where $\xi_\pm$ are the eigenvectors associated to the wave speeds $A \xi_\pm = \pm \sqrt{\lambda} \xi_\pm$. These propagate 
with speeds $\pm \sqrt{\lambda}$ and are of different nature than \eqref{beh1}, \eqref{beh2} which are  stationary in time.

\subsubsection{1-d linear thermoviscoelasticity} \label{sec:osc1d}
The results of the example of Section \ref{sec:ex1d} are now extended to the system of 1-d thermoviscoelasticity
\begin{equation}
\label{lintve1}
\begin{aligned}
\frac{\del^2 u}{\del t^2} &= \lambda \frac{\del^2 u}{\del x^2} + m \frac{\del \theta}{\del x} + \mu \frac{\del^2}{\del x^2} \frac{\del u}{\del t}
\\[3pt]
\frac{\del \theta}{\del t} &= \kappa \frac{\del^2 \theta}{\del x^2} + m \frac{\del^2 u}{\del x \del t}
\end{aligned}
\end{equation}
Note that  $(u(t,x), \theta(t,x))$ describes a thermomechanical process consisting of the motion $u$ and the temperature $\theta$,
while $\lambda > 0$, $m \in \R$, $\mu > 0$, $\kappa > 0$ are parameters describing  the elasticity modulus, thermoelastic coupling, 
viscosity and heat diffusivity, respectively. The system \eqref{lintve1} is  expressed as a hyperbolic-parabolic system by introducing
the velocity  $v = \frac{\del u}{\del t}$ and the elastic strain $w = \frac{\del u}{\del x}$ and writing it as
\begin{equation}
\label{lintve2}
\begin{aligned}
\del_t  \begin{pmatrix} w \\ v \\ \theta \end{pmatrix}
= 
\begin{pmatrix} 0 & 1 & 0 \\ \lambda & 0 & m \\ 0 & m & 0 \end{pmatrix}
\del_x \begin{pmatrix} w \\ v \\  \theta \end{pmatrix} 
+
\begin{pmatrix} 0 & 0 & 0 \\  0 & \mu & 0 \\ 0 & 0 & \kappa \end{pmatrix}
\del_{x x} \begin{pmatrix} w \\ v \\ \theta \end{pmatrix}
\end{aligned}
\end{equation}
Solutions of \eqref{lintve1} satisfy the energy identity
\begin{equation}
\begin{aligned}
&\frac{\del}{\del t} \left ( \tfrac{1}{2} \Big | \frac{\del u}{\del t} \Big |^2 + 
 \tfrac{\lambda}{2} \Big | \frac{\del u}{\del x} \Big |^2  + \tfrac{1}{2} | \theta  |^2   \right )
+ \kappa \Big | \frac{\del \theta }{\del x} \Big |^2  + \mu \Big | \frac{\del^2 u}{\del x \del t}  \Big |^2 
\\
&\qquad = 
\frac{\del}{\del x} \left (  \frac{\del u}{\del t} \big (  \lambda  \frac{\del u}{\del x}  +  m  \theta + \mu  \frac{\del^2 u}{\del x \del t}  \big )  \right )
+ \frac{\del}{\del x} \left ( \kappa \theta \frac{\del \theta}{\del x} \right ) 
\end{aligned}
\end{equation}
Under appropriate boundary conditions (either periodic or no mechanical work and adiabatic at the boundaries) we obtain the energy
dissipation identity
\begin{equation}
\label{energy1}
\frac{d}{dt} \int \Big ( \tfrac{1}{2} \Big | \frac{\del u}{\del t} \Big |^2 +   \tfrac{\lambda}{2} \Big | \frac{\del u}{\del x} \Big |^2  + \tfrac{1}{2} | \theta  |^2  \Big )\, dx
+ \int \kappa \Big | \frac{\del \theta }{\del x} \Big |^2  + \mu \Big | \frac{\del^2 u}{\del x \del t}  \Big |^2  = 0 \, .
\end{equation}
which illustrates that the dissipation is partial arising from  a singular diffusion matrix.

Next, apply an {\it ansatz} of solutions of the form
\begin{equation}
\label{osc1}
\begin{aligned}
u(t,x) &= \frac{1}{n} a(t) e^{ i n x}
\\
\theta(t,x)  &=   - i b  (t) e^{ i n x}
\end{aligned}
\end{equation}
where $i$ is the imaginary unit, $n$ an integer, and $(a(t), - i b(t))$ are complex amplitude functions.
Introducing \eqref{osc1} to \eqref{lintve1} we deduce that 
 $(a(t), b(t))$ satisfies the system of ordinary differential equations
\begin{equation}
\label{ode1}
\begin{aligned}
\ddot{a} &= - \lambda n^2  a + m n^2 b - \mu n^2   \dot a
\\
\dot{b} &= - \kappa n^2 b  - m  \dot{a}
\end{aligned}
\end{equation}
The amplitude $(a, b)$ in \eqref{ode1} was introduced as complex-valued but it is also consistent to select it real-valued, and
we opt for that selection here.
For the {\it ansatz} \eqref{osc1} we have
$$
\begin{aligned}
\frac{\del u}{\del x} &= i a(t) e^{i n x}
\\
\frac{\del u}{\del t} &= \frac{1}{n} \dot{a}(t) e^{i n x}
\\
\frac{\del^2 u}{\del x \del t} &= i \dot{a}(t) e^{i n x}
\end{aligned}
$$
We expect  oscillations of $u(t,x)$ and  the velocity $v = \frac{\del u}{\del t}(t,x)$ to decay to zero (for $n$ large)
while oscillations of the strain $w = \frac{\del u}{\del x}(t,x)$, the strain rate $\frac{\del v}{\del x} = \frac{\del^2 u}{\del x \del t}(t,x)$ 
are expected to persist in time. This has to be validated by studying the behavior of solutions of \eqref{ode1}.

Setting $v = \dot{a}$, \eqref{ode1} is expressed as a first order system,
\begin{equation}
\label{ode2}
\frac{d}{dt} 
\begin{bmatrix}
a \\ v \\ b
\end{bmatrix}
=
\begin{bmatrix}
0 & 1 & 0 \\
- \lambda n^2 &  - \mu n^2   &   m n^2  \\
0 & - m & - \kappa n^2 
\end{bmatrix}
\begin{bmatrix}
a \\ v \\ b
\end{bmatrix}
\end{equation}
Let $A = A(n)$ be the coefficient matrix. The eigenvalues $\rho (n)$ of $A(n)$ are computed as 
roots of the cubic polynomial $\det (A - \rho I) = 0$ which reads
\begin{equation}
\label{algeq1}
\rho^3 + (\kappa + \mu) n^2 \rho^2 + (\kappa \mu n^4 + \lambda n^2 + m^2 n^2 ) \rho + \kappa \lambda n^4 = 0 \, .
\end{equation}

To proceed one considers the eigenvalue problem
$$
( A(n) - \rho(n) I) \xi(n) = 0
$$
and expands the eigenvalues and eigenvectors as functions of $\frac{1}{n}$ to study the behavior of high frequencies.
After a tedious but straightforward calculations (see \cite{Tzavaras23}) one arrives at an asymptotic expansion of
the oscillatory solutions for \eqref{lintve1} which read
\begin{equation}
\label{oscsol1}
\begin{bmatrix}
u \\[5pt]  \frac{\del u}{\del t}  \\[5pt] \theta
\end{bmatrix} (t,x)
=
\begin{bmatrix} \tfrac{1}{n} a(t)  \\  \tfrac{1}{n} v(t)   \\ - i b(t) \end{bmatrix} 
e^{i n x}  
=
\begin{bmatrix}   \frac{1}{n} \mu - \frac{\lambda}{\mu} \frac{1}{n^3}  +  O ( \frac{1}{n^5} )  \\[5pt] -\frac{1}{n} \lambda +  \frac{m^2 \lambda}{\kappa \mu} \frac{1}{n^3} +  O ( \frac{1}{n^5} ) \\[5pt]  
- i  \frac{m \lambda}{\kappa} \frac{1}{n^2} + O ( \frac{1}{n^4} ) \end{bmatrix} 
\exp \Big \{ i n x -\frac{\lambda}{\mu} t +O \big (  \frac{1}{n^2} \Big )  t )  \Big \}
\end{equation}
Considering \eqref{oscsol1} we see that $u$, $\frac{\del u}{\del t}$, $\theta$ and $\frac{\del \theta}{\del x}$ converge to zero for $n$ large, but oscillations persist
for the strain $w = \frac{\del u}{\del x}$, the strain rate $\frac{\del v}{\del x}$ and $\frac{\del^2 \theta}{\del x^2}$. To leading order these solutions 
behave like
\begin{equation}
\label{oscsol2}
\begin{bmatrix}
w  \\[5pt]  \frac{\del v}{ \del x}  \\[5pt] \frac{\del^2 \theta}{\del x^2}
\end{bmatrix} (t,x) 
=
\begin{bmatrix}
\frac{\del u}{\del x}  \\[5pt]  \frac{\del^2 u}{\del t \del x}  \\[5pt] \frac{\del^2 \theta}{\del x^2}
\end{bmatrix} (t,x) 
=
i \, \begin{bmatrix}   \mu + O ( \frac{1}{n^2} )  \\[5pt] -  \lambda +   O ( \frac{1}{n^2} ) \\[5pt]  
  \frac{m \lambda}{\kappa} + O ( \frac{1}{n^2} ) \end{bmatrix} 
\left ( 1 +O \big (  \frac{1}{n^2}   \big ) t  \right ) \exp \Big \{ i n x -\frac{\lambda}{\mu} t  \Big \}
\end{equation}

\begin{remark}
For the special case of \eqref{linate} with $\kappa = 0$, one may construct solutions of \eqref{lintve2} with $\kappa = 0$ for which 
oscillations appear to leading order for the vector function $(w, v_x, \theta)$. This should be contrasted to the behavior
of \eqref{lintve2} (which has $\kappa \ne 0$) where oscillations appear to  leading order for the vector function $(w, v_x, \theta_{xx})$.
\end{remark}

\subsection{Multi-dimensional linear thermoviscoelasticity} \label{sec:oscmd}
We devise next some analogous solutions for the multidimensional system of linear thermoviscoelasticity
\begin{equation}
\label{lintved}
\begin{aligned}
\frac{\del^2 u_k}{\del t^2} &= \frac{\del}{\del x_\alpha} \left ( A_{k l \alpha \beta} \frac{\del u_l }{\del x_\beta} + M_{k \alpha} \theta \right ) 
+ \mu \Delta \left ( \frac{\del u_k}{\del t} \right )
\\[5pt]
\frac{\del \theta}{\del t} &= \kappa  \Delta \theta + M_{k \alpha} \frac{\del}{\del x_\alpha} \left ( \frac{\del u_k}{\del t} \right )
\end{aligned}
\end{equation}
The system describes the evolution of a thermomechanical process $(u, \theta) : (0,\infty)\times \R^d \to \R^d \times \R^+$. Here, the summation convention is used, 
the parameters $\kappa > 0, \mu > 0$ describe the viscosity and thermal diffusivity, $A_{k l \alpha \beta}$ is a fourth order rank-1 convex tensor which
is symmetric in the sense $A_{k l  \alpha \beta} = A_{l k  \alpha \beta}$, $A_{k l \alpha \beta} = A_{k  l \beta \alpha}$ and describes the elastic response, 
while $M_{k \alpha}$ is a second order tensor describing the thermoelastic coupling.

Consider an {\it ansatz} of oscillating solutions of the form
\begin{equation}
\label{oscd}
\begin{aligned}
u_k (t, x) &= \frac{1}{n} a_k (t) e^{ i n \nu \cdot x}   \qquad k = 1, ... , d
\\
\theta(t,x) &= - i b (t) e^{ i  n \nu \cdot x}
\end{aligned}
\end{equation}
where $\nu = (\nu_1, ... , \nu_d)$ is a unit vector,  $| \nu | =1$, and n is a natural number. Introducing the ansatz \eqref{oscd}  to \eqref{lintved} we see
that  the amplitudes $a_k (t)$, $\beta (t)$ satisfy the system of ordinary differential equations
\begin{equation}
\label{oded}
\begin{aligned}
\ddot{a}_k  &= - n^2  (A_{k l \alpha \beta} \nu_\alpha \nu_\beta)  a_l + n^2 (M_{k \alpha} \nu_\alpha ) b- \mu n^2 \dot{a}_k
\\
\dot{b} &= - n^2 \kappa b - ( M_{k \alpha} \nu_\alpha ) \dot{a}_k
\end{aligned}
\end{equation}
Note that \eqref{oded} admits real valued solutions $(a_1, ... , a_d , b)$ which by taking real and imaginary parts in \eqref{oscd} give rise to 
real-valued oscillating solutions for \eqref{lintved}.

Recalling that the summation convention is used, we proceed to solve \eqref{oded}. As already noted,
the fourth order tensor $A_{k l  \alpha \beta}$ is assumed symmetric and rank-1 convex:
\begin{align}
\label{hypsym}
A_{k l  \alpha \beta} = A_{l k  \alpha \beta} \,  , \quad &A_{k l \alpha \beta} = A_{k  l \beta \alpha} \, ,
\tag{H$_1$}
\\
\label{rank1cvx}
A_{k l  \alpha \beta}  \nu_\alpha \nu_\beta \xi_k \xi_l > 0 \quad &\forall \xi \in \R^d - \{0\} \, ,  \nu \in S^{d-1} \, .
\tag{H$_2$}
\end{align}
Hypothesis \eqref{rank1cvx} implies that the acoustic tensor $Q_{k l} := A_{k l  \alpha \beta}  \nu_\alpha \nu_\beta $ is symmetric, 
positive definite. It thus has positive eigenvalues $\lambda^r > 0$, and a complete set of linearly independent eigenvectors $\xi^r$,
$r = 1, ... , d$, satisfying
\begin{equation}
\label{eigenv}
Q_{k l} \xi_l^r = \lambda^r \xi_k^r  \quad k = 1, ... , d \, \; \; \mbox{ normalized so that $|\xi^r| = 1$. }
\end{equation}
We now place an assumption on the thermoelastic interaction matrix $M_{k \alpha}$. Given $\xi^r$ we assume there is $\nu \in S^{d-1}$ 
such that for some $m \in \R$
\begin{equation}
\label{hypm}
M_{k \alpha } \nu_\alpha =  m \xi^r_k   \quad k = 1, ... d .
\tag{H$_3$}
\end{equation}
Note that $|\xi^r|=1$ implies $m^r = \xi_k M_{k \alpha} \nu_\alpha$.
Hypothesis \eqref{hypm} is automatically satisfied when $\text{rank} M = d$. Otherwise, it is an assumption connecting
pairs of eigenvectors $\xi^r$ to  associated propagation directions $\nu$.

With these in place, fix $r = 1, ... , d$ and proceed to solve \eqref{oded}. We make the ansatz
\begin{equation}
\label{anform}
a_k (t) = \alpha(t) \xi_k^r  \quad \mbox{ where  $\xi^r \in \R^d$, $|\xi^r | = 1$ and $\alpha(t)$ scalar valued}.
\end{equation}
Then  $(\alpha (t), b (t))$ satisfies the system of ordinary differential equations
\begin{equation}
\label{ode3}
\begin{aligned}
\ddot{\alpha} &=  - n^2 \lambda^r \alpha + n^2 m^r b - \mu n^2 \dot{\alpha}
\\
\dot{b} &= - \kappa n^2 b - m^r \dot{\alpha}
\end{aligned}
\end{equation}
where $m^r = \xi^r \cdot M \nu$, and $\lambda^r$,  $\xi^r$ are an eigenvalue and corresponding eigenvector of the acoustic tensor.
Conversely, \eqref{hypm}, \eqref{eigenv} imply solutions of \eqref{ode3} generate via \eqref{anform}  solutions of \eqref{oded}.

The system \eqref{ode3} is precisely \eqref{ode2} studied  in Section \ref{sec:osc1d}.
Combining the analysis of subsection \ref{sec:osc1d} with \eqref{anform} we conclude that if \eqref{hypsym}, \eqref{rank1cvx} and \eqref{hypm} are satisfied 
for an eigenvalue-eigenfunction pair $(\lambda^r, \xi^r)$ and associated direction $\nu$, there will be oscillating solutions for \eqref{lintved}
of the form
\begin{equation}
\label{oscsold}
\begin{aligned}
\begin{bmatrix}
u \\[5pt]  \frac{\del u}{\del t}  \\[5pt] \theta
\end{bmatrix} (t,x)
&=
\begin{bmatrix} \tfrac{1}{n} \alpha(t) \xi^r  \\[5pt]  \tfrac{1}{n} \dot{\alpha} (t) \xi^r   \\[5pt]  - i b (t) \end{bmatrix} 
e^{i n \nu \cdot x}  
=
\begin{bmatrix}   \left ( \frac{1}{n} \mu + O \big ( \frac{1}{n^3}  \big ) \right ) \xi^r  \\[8pt] 
\left ( -\frac{1}{n} \lambda^r +  O ( \frac{1}{n^3} )  \right ) \xi^r \\[8pt]  
- i \frac{1}{n^2} \frac{m^r  \lambda^r }{\kappa} + O ( \frac{1}{n^4} ) \end{bmatrix} 
\exp \Big \{ i n \nu \cdot x -\frac{\lambda^r}{\mu} t +O \big (  \frac{1}{n^2} \Big )  t )  \Big \}
\end{aligned}
\end{equation}
where $(\alpha(t), b(t))$ satisfy \eqref{ode3}.
These provide oscillating progressive waves in the direction $\nu$. Again $u$, $v = \frac{\del u}{\del t}$
and $\theta$ converge to zero, $\nabla u$, $\nabla v$ and $\Delta \theta$ have persistent oscillations
\begin{equation}
\label{behexa2}
\begin{aligned}
\nabla u &=  i \mu  ( \xi^r \otimes \nu )  \left ( 1 +  O \big (  \frac{1}{n^2} \Big )  t \right ) \exp \Big \{ i n \nu \cdot x -\frac{\lambda^r}{\mu} t  \Big \}
\\
\Delta \theta &=  i  \frac{m^r  \lambda^r }{\kappa}   \left ( 1 +  O \big (  \frac{1}{n^2} \Big )  t \right ) \exp \Big \{ i n \nu \cdot x -\frac{\lambda^r}{\mu} t  \Big \}
\end{aligned}
\end{equation}
induced by oscillations in the initial data.

\subsection{Linear adiabatic thermoviscoelasticity}
\label{sec:ex2}
Another system where oscillatory solutions may be constructed explicitly is the case of adiabatic thermoviscoelastcity, namely
the special case of \eqref{lintved} when $\kappa = 0$, which reads
\begin{equation}
\label{linate}
\begin{aligned}
\frac{\del^2 u_k}{\del t^2} &= \frac{\del}{\del x_\alpha} \left ( A_{k l \alpha \beta} \frac{\del u_l }{\del x_\beta} + M_{k \alpha} \theta \right ) 
+ \mu \frac{\del}{\del x_\alpha} \frac{\del}{\del x_\alpha} \left ( \frac{\del u_k}{\del t} \right )
\\[5pt]
\frac{\del \theta}{\del t} &=  M_{k \alpha} \frac{\del}{\del x_\alpha} \left ( \frac{\del u_k}{\del t} \right )
\end{aligned}
\end{equation}
Introducing \eqref{oscd} leads to $(a_1, ... a_d , b)$ solving the system of ordinary differential equations \eqref{oded}
with $\kappa = 0$, If  $a(t) = (a_1, ... a_d ) (t)$, $b(t)$ are selected by solving
\begin{align}
\label{odeds}
\ddot{a}_k  &= - n^2  (A_{k l \alpha \beta} \nu_\alpha \nu_\beta)  a_l -  n^2 (M_{k \alpha} \nu_\alpha ) ( M_{l \beta} \nu_\beta ) a_l  - \mu n^2 \dot{a}_k
\\
\label{eqb}
b &= - ( M_{k \alpha} \nu_\alpha ) a_k \, ,
\end{align}
then  $(a, b)$  produce  via \eqref{oscd} a solution of \eqref{linate}.

We impose the hypotheses \eqref{hypsym}, \eqref{rank1cvx} of rank-1 convexity for $A_{k l \alpha \beta}$ (but not hypothesis \eqref{hypm}).
Fix a direction $\nu \in S^{d-1}$ and consider the modified acoustic tensor
$$
Q^m_{kl} = A_{k l \alpha \beta} \nu_\alpha \nu_\beta + (M_{k \alpha} \nu_\alpha ) ( M_{l \beta} \nu_\beta )
$$
By \eqref{rank1cvx} the matrix $Q^m_{kl}$ is positive definite. Let $\sigma^{ r} > 0$ be an eigenvalue and $\zeta^r$ the corresponding eigenvector,
$$
Q^m \zeta^r = \big ( Q + ( M \nu ) \otimes (M \nu) \big ) \zeta^r = \sigma^r \zeta^r \, , \quad | \zeta^r| =1  \, .
$$
Set $a_k = \alpha (t) \zeta^r_k$, then \eqref{odeds} is equivalent to solving the equation for the amplitude $\alpha (t)$
\begin{equation}
\label{exode3}
\frac{d^2\alpha}{d t^2} + \mu n^2 \frac{d \alpha}{\del t} + n^2 \sigma^r \alpha = 0 \, ,
\end{equation}
which is precisely the example studied in Section \ref{sec:ex1d}.

The analysis in Section \ref{sec:ex1d} indicates there are two solutions of \eqref{exode3} one that decays fast, and
a second denoted there by  $\alpha_+ (t) = e^{\rho_+ t}$ which decays slowly. Utilizing \eqref{oscd}, \eqref{eqb} the slowly decaying solution we conclude that \eqref{linate}
has solutions of the form
\begin{align}
u_k = \frac{1}{n} \alpha (t) \zeta_k^r e^{i n \nu \cdot x}  &= \frac{1}{n}  \zeta_k^r  \exp \left \{ i n \nu \cdot x - \frac{\sigma^r}{\mu}t + O \big ( \frac{1}{n^2} \big ) t \right \}
\\
\nonumber
\theta = i ( M_{k \alpha} \nu_\alpha  \zeta^r_k ) \alpha (t) e^{i n \nu \cdot x}  &= i ( M_{k \alpha} \nu_\alpha  \zeta^r_k ) \exp \left \{ i n \nu \cdot x - \frac{\sigma^r}{\mu}t + O \big ( \frac{1}{n^2} \big ) t \right \}
\\
\label{behtheta}
&= i (\zeta^r \cdot M \nu ) \Big ( 1 + O \big ( \frac{1}{n^2} \big )  t \Big )  \exp  \left \{ i n \nu \cdot x - \frac{\sigma^r}{\mu}t  \right \}
\end{align}
Note that  $u_n , v_n \to 0$ as $n \to \infty$ but oscillations persist for $\theta$,  
\begin{equation}
F = \nabla u = i ( \zeta^r \otimes \nu)   \Big ( 1 + O \big ( \frac{1}{n^2} \big )  t \Big )   \,  \exp  \left \{ i n \nu \cdot x - \frac{\sigma^r}{\mu}t  \right \}
\end{equation}
and  $\nabla v$.

In the above example there are oscillations of $\theta$ to leading order, see \eqref{behtheta}. This behavior should be contrasted to the example of Section 
\ref{sec:oscmd} where, due to the presence of heat diffusion, oscillations first appear in $\Delta \theta$, see \eqref{behexa2}.

%\vfil\eject

\section{Quasi-linear dynamical models in the dynamics of phase transitions}\label{sec:phasetransitions}

In this section examples are recorded of solutions exhibiting sustained oscillations for 
various quasi-linear hyperbolic-parabolic systems. The first class of models concerns viscoelastic systems of
the type used in modeling of phase transitions.

\subsection{Longitudinal motions of a viscoelastic bar}\label{sec:1dnonmon}

We outline an oscillatory solution for 
the system 
\begin{equation}
\label{longonev}
\begin{aligned}
u_t &= v_x
\\
v_t &= \sigma(u)_x +  \del_x \left ( \frac{\mu}{u} v_{x} \right ) \, ,
\end{aligned}
\end{equation}
which describes longitudinal motions  $y(t,x) :(0,T) \times [0,1] \to \R$ for a one-dimensional viscoelastic bar.
In this context $u = y_x > 0$ is the longitudinal strain, $v= y_t$ the velocity, and the stress 
$$
S = \sigma (y_x) +    \frac{\mu }{y_x} y_{t x}
$$ 
has an elastic and a viscous component. The viscosity in Lagrangian coordinates  is expressed as $\frac{\mu}{u}$.
We take $\mu = 1$ and the function $\sigma (u)$ is smooth and  non-monotone. Such models have
been used for modeling phase transitions.

Assume two positive states $0 < a < b$ are fixed and suppose the stress function $\sigma (u)$
satisfies
\begin{equation}
\label{condnonm}
 \sigma (\tau  a) = \sigma ( \tau  b) \qquad \mbox{ for $\tau \in [1,2]$} \, .
\end{equation}
Clearly, \eqref{condnonm} requires that $\sigma(u)$ is nonmonotone.
%\begin{figure}[htbp]
%\begin{center}
%{\sc  Insert FIGURE  gr1.pdf}
%%\includegraphics[scale=0.3]{gr1.pdf}
%\caption{The block parts of the graph satisfy \eqref{condnonm} for $1\le t\le2$; the dotted points interpolate between these parts.}
%\label{figgraph}
%\end{center}
%\end{figure}
The example detailed below is based on two properties of the system \eqref{longonev}:
\begin{itemize}
\item[(i)]  It admits special solutions of the form
\begin{equation}\label{unifshear}
\bar u (t) = \kappa t \, , \quad \bar v (x) = \kappa x
\end{equation}
where $\kappa > 0$ is a rate of stretching. These solutions are universal in the sense that they satisfy
\eqref{longonev} for any function $\sigma (u)$.

\item[(ii)]  \eqref{longonev} admits piecewise smooth solutions that are 
continuous in $v$ but discontinuous in $u$ and $v_x$, see \cite{Hoff86}, provided the Rankine-Hugoniot conditions 
$$
\begin{aligned}
-s [u] &= [v]
\\
- s [v] &= \left [ \sigma (u) +\frac{v_x}{u} \right ]
\end{aligned}
$$
are satisfied, where $s$ is the shock speed and $[q] = q_+ - q_-$ the jump of the quantity $q$. As $v$ is continuous, 
the shocks are stationary $s=0$ and $[u] \ne 0$ has to satisfy
\begin{equation}\label{sec7RH}
s= 0 \, , \quad \left [ \sigma (u) + \frac{u_t }{u} \right ] = 0 \, .
\end{equation}
\end{itemize}

A family of solutions to \eqref{longonev} is constructed defined for $(t,x) \in [1,2]\times \R$ with $u$ and $v_x$ periodic. Fix states $a, b$ such that
$0 < a < 2a < b < 2b$ and $\sigma(u)$ so that \eqref{condnonm} is satisfied. We denote by $S(t)$ the common value
\begin{equation*}
S(t) := \frac{1}{t a} a  + \sigma (t a) = \frac{1}{t b} b  + \sigma (t b)  \, ,  \qquad 1 \le t \le 2 \, .
\end{equation*}
For $0 < \theta < 1$ define the periodic function
\begin{equation}\label{perU}
U(t,x)  := \begin{cases}
 a \,  t   &  \quad  \; \;  k <  x <  k + \theta
 \\
 b \,  t   &  k + \theta < x < k + 1
 \end{cases}
\quad \; k \in \mathbb{Z}  
\end{equation}
Next, let $c_\theta = \theta a + (1-\theta) b$ and set
\begin{equation}
\label{perY}
\begin{aligned}
Y(t,x) = \int_0^x U (t, y) dy &= 
\begin{cases}
k c_\theta t + (x-k) a t  & \quad  \; \;  k <  x <  k + \theta
\\
 k c_\theta t + \theta a t + \big ( x - k - \theta) b t   &  k + \theta < x < k + 1
\end{cases} \, , \quad k \in \mathbb{Z} \, ,
\\
V(t,x) = \del_t Y (t,x) &=
\begin{cases}
k c_\theta  + (x-k) a   & \quad  \; \;  k <  x <  k + \theta
\\
 k c_\theta  + \theta a  + \big ( x - k - \theta) b    &  k + \theta < x < k + 1
\end{cases} \, , \quad k \in \mathbb{Z} \, .
\end{aligned}
\end{equation}
Then $(U, V)$ is a weak solution of \eqref{longonev} on $ [1,2]\times\R$.
It satisfies the equations in a classical sense on $(1,2)\times (k, k + \theta)$, $(1,2)\times(k + \theta , k+1)$ and 
the Rankine-Hugoniot conditions \eqref{sec7RH} at the interfaces $x = k$ and  $x = k + \theta$ for $1 \le t \le 2$, $k \in \Z$.
%\begin{figure}
%\centering
%\vspace{-0.1cm}
%\includegraphics[scale=0.3]{gr1}
%\vspace{-0.1cm}
%\caption{The block parts of the graph satisfy \eqref{condnonm} for $1\le t\le2$; the dotted points interpolate between these parts}.
%\label{figgraph}
%\end{figure}

The function $Y(t,x)$ is then rescaled and restricted to the interval $(t,x) \in  Q = (1,2)\times (-1,1)$ to define
\begin{equation}
\label{exsoln3}
y_n (t,x) = \frac{1}{n} Y(t, nx)  \, \quad u_n = \del_x y_n = U  (t, nx)  \, \quad {v_n} = \del_t y_n  =  \frac{1}{n}  V (t, nx)
\end{equation}
One checks that $(u_n , v_n )$ is a weak solution of \eqref{longonev} which is stationary for the momentum equation.
Moreover, one easily computes the limits
$$
\begin{aligned}
u_n  &\rightharpoonup  (a\theta + b(1-\theta) ) t  \quad \mbox{ weakly-$\star$ in  $L^\infty \big ( Q \big )$ }
\\
\del_x v_n &\rightharpoonup  (a\theta + b(1-\theta) )  \quad \mbox{ weakly-$\star$ in  $L^\infty \big ( Q \big )$ }
\\
v_n &\to (a\theta + b(1-\theta)) x  \quad \mbox{ strongly in $L^2 \big ( Q \big )$ }
%\\
%&\sigma(u_n) +  \frac{1}{u_n} \partial_x v_n  = S(t)
\end{aligned}
$$
and 
$$
\sigma (u_n) \rightharpoonup  \theta \sigma ( at) + (1-\theta) \sigma(bt) \ne \sigma \big (  \theta a t + (1-\theta) b t) \, ,
$$
weak-$\star$ in $L^\infty(Q)$. The oscillations in solutions are characterized by the Young measure 
$$\nu = \theta \delta_{at} + (1-\theta) \delta_{b t}$$
and are induced by oscillations in the initial data $u_n (1,x)$ and $\del_x v_n (1, x)$.

\begin{remark}\label{rmk:oscshear}
For the system describing one-dimensional shear motions,
\begin{equation}
\label{onedvisco}
\begin{aligned}
u_t &= v_x
\\
v_t &= \sigma(u)_x + v_{xx} \, ,
\end{aligned}
\end{equation}
for a viscoelastic material of rate type, a similar periodic solution can be constructed.
This is achieved provided the function $\sigma (u)$ satisfies the condition  
\begin{equation}
\label{condnonmv}
a + \sigma (t a) = b + \sigma (t b) \, , 
\end{equation}
for two states $a < b$ and for $t \in [1,2]$. 
Here,  the states $a,b$ are fixed and $t$ is thought as a parameter. 
The condition \eqref{condnonmv} restricts the form of $\sigma(u)$. We give an example
to show that it can be fulfilled. For $a, b >0$ fixed so that $0 < a < 2a < b < 2b$
suppose the graph of $\sigma (u)$ is given and is strictly increasing for $u \in (b, 2b)$.
Then \eqref{condnonmv} fully determines the graph of $\sigma (u)$ for $u \in (a, 2a)$
from the graph in $(b, 2b)$.
The emerging graph is increasing in $(a,2a)$ but the full graph will be non-monotone.
Once \eqref{condnonmv} holds in an interval $t \in [1,2]$, a solution $(v,u)$  for \eqref{onedvisco} with $v$, $u_x$ periodic is constructed 
 for $t \in [1,2]$, $x \in \R$ similarly as \eqref{perY}. This gives rise to an example of solutions for \eqref{onedvisco} which exhibits sustained oscillations and whose weak limit is no longer  a solution of \eqref{onedvisco}.
%\begin{figure}[htbp]
%\begin{center}
%\includegraphics[scale=0.3]{gr1.pdf}
%\caption{The block parts of the graph satisfy \eqref{condnonm} for $1\le t\le2$; the dotted points interpolate between these parts.}
%\label{figgraph}
%\end{center}
%\end{figure}
\end{remark}

\begin{remark}
We refer to \cite[sec 4]{Tzavaras23} for a construction of periodic solutions and a discussion of
sustained oscillations for the system of das dynamics of viscous, heat conducting gases:
\begin{equation}
\label{vhcg}
\begin{aligned}
u_t - v_x &= 0
\\
v_t - \sigma(u,\theta)_x &= ( \frac{\mu}{u} v_x )_x 
\\
\big ( \tfrac{1}{2} v^2 + e(u, \theta) \big )_t - \Big ( \sigma(u,\theta) \, v \Big )_x 
&= \Big ( \frac{\mu}{u} \,  v \, v_x \Big )_x +  \Big ( \frac{\kappa}{u} \theta_x \Big )_x  \, .
\end{aligned}
\end{equation}
In this model $u$ stands for the specific volume (the inverse of the density), $v$ is the longitudinal velocity,
and $\theta$ the temperature,  while the internal energy $e$ and  the stress $\sigma$ are determined via 
appropriate constitutive relations satisfying the Maxwell relations;
in this interpretation $u > 0$,  $\theta > 0$, and the viscosity and heat conductivity $\mu, \kappa \ge 0$.  
Oscillating solutions are constructed for adiabatic gases, $\kappa = 0$, with viscosity $\mu > 0$ 
for specially designed pressure functions $\sigma(u,\theta)$ that are nonmonotone in $u$.
\end{remark}

\subsection{ Time-dependent twinning solutions in nonlinear viscoelasticity}\label{sec:twin}

In preparation, we mention a well known result from continuum mechanics. Let $\psi(x)$ be a displacement field defined on
a reference configuration $\cR$ and suppose that $\cR$ is split into two subdomains, $\cR = \cR^+ \cup \cR^-$, by a smooth 
nonsingular hypersurface $\cS$ defined by
$$
f(x) = 0  \quad \mbox{with $\nabla f \ne 0$}.
$$
Suppose that $\psi$ enjoys the regularity $\psi$ is continuous on $\cR$, $\psi \in C^1\big(\overline{\cR^-}\big)$, $\psi \in C^1\big( \overline{\cR^+}\big)$,
and the limits
$$
\lim_{ \scriptsize\begin{matrix} x\to x_0 \\ x> x_0 \end{matrix}} \nabla \psi(x) = F^+ \, , \quad \lim_{\scriptsize \begin{matrix} x\to x_0 \\ x< x_0 \end{matrix} }\nabla \psi(x) = F^- \, ,
$$
exist for $x_0 \in \cS$ and are finite. Then the deformation gradients satisfy across $\cS$ the jump conditions
$$
F^+ - F^- = a \otimes N
$$
where $N$ is the normal to the surface and $a$ is an amplitude capturing the jump of the gradient in the normal direction at $\cS$. This result is the backbone 
in the  construction of twinning solutions in elasticity, \cite{James81, BJ87}.

Consider the system of viscoelasticity of Kelvin-Voigt type,
\begin{align}\label{vekv}
&\partial_{tt}y-\dive  \Big (  \frac{\del W}{\del F} (\nabla y) + \nabla y_t  \Big ) = 0 \, .
\end{align}
It is written as a system of conservation laws expressed in coordinate form, for the quantities 
$v_i = \frac{\del y_i}{\del t}$, $F_{i \alpha} = \frac{\del y_i}{\del x_\alpha}$, by
\begin{equation}\label{vekv-coord}
\begin{aligned}
\del_t F_{i \alpha} &= \del_\alpha v_i
\\
\del_t v_i &= \del_\alpha \Big ( \frac{\del W}{\del F_{i \alpha}} (F) + \del_\alpha v_i \Big )
\\[3pt]
0 &= \del_\beta F_{i \alpha } - \del_\alpha F_{i \beta} \, .
\end{aligned}
\end{equation}
The last equation constrains $F$ to be a gradient. It is an involution, i.e. a constraint that propagates
from the initial data to solutions. 

The construction of oscillating solutions will result from two ingredients, a class of uniform shearing solutions
in several dimensions combined with conditions that guarantee joining these solutions across interfaces. This is similar in spirit as the one
dimensional example in Section \ref{sec:1dnonmon}, but the construction is more elaborate.

We start with the interface conditions.
Suppose that $\cS = \{ (t, x) : f(t,x) = 0\}$ is a space-time hypersurface that splits a space time domain $Q_T = \cR \times (0,T)$ into two parts, 
$Q_T = \cR^+ \cup \cR^-$. We assume that $\cS$ is smooth and depicts the motion of an interface $x = x(t)$ which is moving in the direction of the normal
$\nu = \frac{\nabla_x f}{| \nabla_x f |}$ with speed $\dot x = s \nu $.  Since the interface $x(t)$ moves on the surface, we have $f(x(t), t) = 0$, which
yields for the interfacial speed that $s = - \frac{f_t}{| \nabla_x f|}$.  Suppose a solution $(v, F)$ of \eqref{vekv} 
that has smoothness $v \in C(Q_T)$, and such that $F, \nabla v \in  C \big(\overline{\cR^-}\big)$,  $F, \nabla v \in C \big( \overline{\cR^+}\big)$
and the right and left limits of $F, \nabla v$ exist and are finite on both sides of the space-time surface $\cS$.  The limits will satisfy the Rankine-Hugoniot conditions
\begin{align}
- s [ F_{i \alpha}] &= \nu_\alpha [v_i]
\label{rh1}
\\
- s [v_i] &= \nu_\alpha \left [ \frac{\del W}{\del F_{i \alpha}} (F) + \del_\alpha v_i \right ]
\label{rh2}
\\[3pt]
0 &= \nu_\beta [F_{i \alpha } ] - \nu_\alpha [F_{i \beta}]
\label{rh3}
\end{align}
where $[ \cdot ]$ denotes the jump across the interface.
Condition \eqref{rh3} implies that $[F] \tau = 0 $ for any vector $\tau \perp \nu$ and thus $[F] = a \otimes \nu$. Together with \eqref{rh1} they 
lead to
\begin{equation}\label{kinemjump}
[F] = a \otimes \nu \, , \quad [v] = - s a \, .
\end{equation}
We are here interested in $[F] \ne 0$ and $[v] = 0$. 
The latter precludes $\nabla v$ from having delta masses and allows the jump to be computed via \eqref{rh2}.
This suggests to consider standing shocks $s=0$ in conformance with the considerations of the one-dimensional case studied in \cite{Hoff86}.
In summary,  $s=0$ and $[F] \ne 0$ have to satisfy
\begin{equation}\label{steadysh}
[F] = a \otimes \nu \, , \quad   \big [  T \nu \big ] = \Big [  \Big ( \frac{\del W}{\del F} (F) + \nabla v \Big ) \nu \Big ] = 0
\end{equation}
where $T =  \frac{\del W}{\del F} (F) + \nabla v $ is the total stress. Observe that \eqref{steadysh} requires that the steady interface is in equilibrium.

The second ingredient is special solutions of \eqref{vekv} of the particular form $y(t,x) = F(t) x$ with $F(t)\in \R^{d \times d} $ a time dependent matrix.
One checks that $F(t)$ satisfies $\ddot F = 0$, which implies that the solution of \eqref{vekv} must be of the form
\begin{equation}\label{usmd}
y(t,x) = ( t F_0 +  F_1) x  \, , \quad F_0, F_1 \in \R^{d \times d} \, ,
\end{equation}
where $F_0$, $F_1$ are constant matrices. They describe uniform shear and are
universal in the sense that they are independent of the form of the stored energy $W(F)$.

We use \eqref{kinemjump} and \eqref{steadysh} to construct some special solutions of \eqref{vekv-coord}, which are in turn used to construct 
oscillatory solutions for the same system.

\subsubsection{Steady solutions with jumps in the deformation gradient in elasticity} \label{sec:steadyjump}
The first solution is known from work of Ball and James \cite{BJ87} and
provides solutions with discontinuities in the deformation gradient for the equations of elasticity. The solutions are steady and solve \eqref{vekv-coord}.
Namely, let $F_-$, $ F_+$ be such that $F_+ - F_- = a \otimes \nu$ with $a \in \R^d$, $\nu \in \mathcal{S}^{d-1}$. The hyperplane $\nu \cdot x =0$ passes 
through the origin and is orthogonal to the unit vector $\nu$.  The function
\begin{equation}
y (t, x) = 
\begin{cases} 
F_-  x  & x \cdot \nu < 0 \\
F_+ x &  x \cdot \nu > 0 \\
\end{cases}
\end{equation}
satisfies $v(t,x) = 0$ and is a steady solution of 
\begin{equation}\label{vekv-coord-el}
\begin{aligned}
\del_t F_{i \alpha} &= \del_\alpha v_i
\\
\del_t v_i &= \del_\alpha \Big ( \frac{\del W}{\del F_{i \alpha}} (F) \Big )
\\[3pt]
0 &= \del_\beta F_{i \alpha } - \del_\alpha F_{i \beta}
\end{aligned}
\end{equation}
and also of \eqref{vekv-coord} provided
$$
F_+ - F_- = a \otimes \nu  \, , \qquad 
[  T \nu \big ] = \Big (  \frac{\del W}{\del F} (F_+) - \frac{\del W}{\del F} (F_-) \Big )   \nu = 0 \, .
$$
This solution  has discontinuous deformation gradient across an interface, and  is related to twinning \cite{James81}. The position of
the matrices $F_-$, $F_+$ can be interchanged and leads ato a different weak solution. Combining these together can lead to a persistent steady
oscillatory structure that is related to phase transitions, \cite{BJ87}.

\subsubsection{Dynamic solutions with jumps in deformation gradient and strain rate in viscoelasticity}\label{sec:vejump}
Next consider $F_0$ a baseline deformation gradient,  $F_- = F_0 + a \otimes \nu$, $F_+ = F_0 + b \otimes \nu$,
where $a$, $b \in \R^d$, $\nu \in \mathcal{S}^{d-1}$. Then $F_+ - F_- = (b-a) \otimes \nu$. The hyperplane $x \cdot \nu = 0$ passes through the origin.
Consider the motion
\begin{equation}\label{basicsol}
\begin{aligned}
y (t, x) &= 
\begin{cases} 
y_- (t,x)  =  t \big ( F_0 + a \otimes \nu \big ) x   & \quad  x \cdot \nu < 0 \\
y_+ (t,x)  =  t \big ( F_0 + b \otimes \nu \big ) x  &  \quad x \cdot \nu > 0 \\
\end{cases}
\\[5pt]
v (t, x) &= 
\begin{cases} 
 \big ( F_0 + a \otimes \nu \big ) x   &\quad  x \cdot \nu < 0 \\
 \big ( F_0 + b \otimes \nu \big ) x  &  \quad x \cdot \nu > 0 \\
\end{cases}
\end{aligned}
\end{equation}
Note that $y$ and $v$ are both continuous across the interface $x \cdot \nu = 0$  which is steady in time.  The deformation gradient $\nabla y$ and stretch tensor $\nabla v$
have discontinuities across that interface.  Moreover,  by virtue of \eqref{steadysh},  $y$ will be a weak solution of  \eqref{vekv-coord} on the domain 
$[1,2] \times \R^d$ provided
\begin{equation}\label{cond}
\Big (  \frac{\del W}{\del F} ( t F_- + t (b-a) \otimes \nu ) - \frac{\del W}{\del F} ( t F_-)  + (b - a) \otimes \nu \Big )   \nu = 0 \, ,
\quad \mbox{ for $t \in [1,2]$},
\tag{C}
\end{equation}
is satisfied. The reader should note that under \eqref{cond}  the role of $a$ and $b$ (or $F_-$ and $F_+$) can be interchanged 
so as to obtain a second solution with discontinuities in the deformation gradient $\nabla y$ and stretching $\nabla v$.
%   Figure of oscillating deformation gradient
%%%
\begin{figure}[htbp] 
\begin{tikzpicture}[scale=1.8]%[x=2cm,y=2cm] 
\def\ang{-10} % degrees 
\def\th{.4} % theta \in (0,1) 

\draw[->] (-2,0) -- (3.5,0) node[below, font=\normalsize] {$x_1$}; 
\draw[->] (0,-2) -- (0,3) node[right, font=\normalsize] {$x_2$}; 

\begin{scope}[rotate=\ang] 

\foreach \x in {-2,-1,...,2} { 
  \draw (\x,-1.5) -- (\x,0) node[point]{} -- (\x,2.5) 
    (\x+\th,-1.5) -- (\x+\th,0) node[point]{} -- (\x+\th,2.5); 
  \path (\x+\th/2,1) node{$tF_-$}  (\x+1/2+\th/2,1) node{$tF_+$}; 
  \path[every node/.style={below, fill=white, inner sep=2pt, outer sep=3pt, fill opacity=.85, text opacity=1.}] 
    (\x,0) node[rotate=\ang]{$\x\mathstrut$ } 
    (\x+\th,0) node[rotate=\ang]{\ifnum \x=0 $\theta\mathstrut$ \else $\x{+}\theta\mathstrut$ \fi}; 
}; 
\draw (3,-1.5) -- (3,0) node[point]{} 
  node[below, fill=white, inner sep=2pt, outer sep=3pt, fill opacity=.85, text opacity=1., rotate=\ang]{$3\mathstrut$} -- (3,2.5); 

\path[above] 
(0,2.5) node[yshift=1mm]{$x\cdot\nu=0$} 
(\th,2.5) node{$x\cdot\nu=\theta$} 
(1,2.5) node{$x\cdot\nu=1$} 
(2,2.5) node{$x\cdot\nu=2$}; 

\draw[->, very thick] (0,0) -- (1,0) node[above left]{$\nu$}; 
\draw[dashed] (-2.25,0) -- (3.5,0); 

\end{scope} 
\end{tikzpicture}  
\caption{Deformation gradient for oscillating solutions, $F_+ = F_0+a\otimes\nu$, $F_- = F_0+b\otimes\nu$.} \label{Figoscillation}
\end{figure}
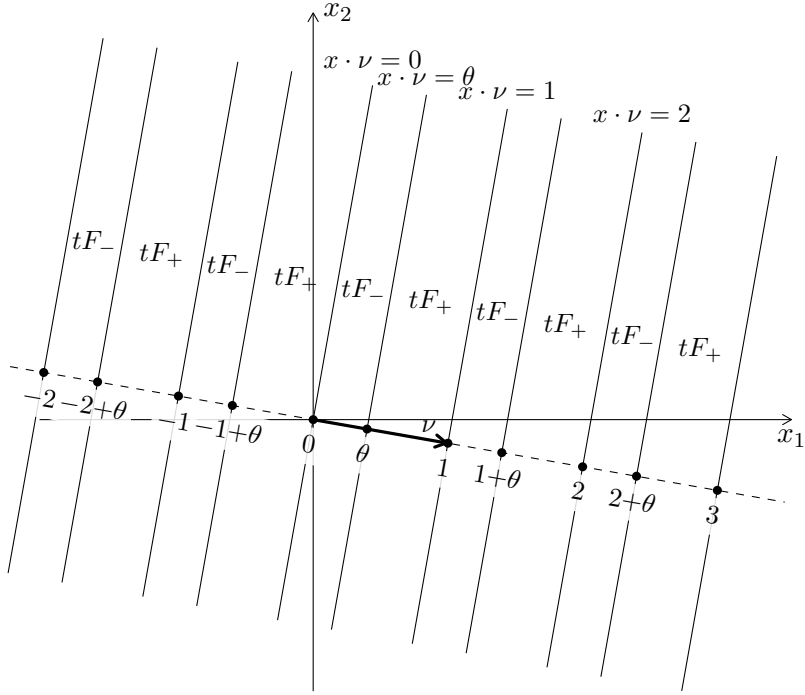 

Suppose that the condition \eqref{cond} is satisfied. By interlacing the solution \eqref{basicsol} with the corresponding solution with the position
of $F_-$ and $F_+$ interchanged, we construct an oscillating solution defined on $[1,2] \times \R^d$ that jumps across the steady interfaces 
$x \cdot \nu = k$ and $x \cdot \nu = k + \theta$, $k \in \mathbb{Z}$. The deformation gradient and stretching are given respectively by
\begin{equation}
\begin{aligned}
\nabla y (t,x) &= \begin{cases} t  ( F_0 + a \otimes \nu )   & \quad   k< x \cdot \nu < k + \theta  \\
                                              t ( F_0 + b \otimes \nu  )   &  \quad  k + \theta < x \cdot \nu < k + 1  \\
\end{cases}   \qquad k \in \mathbb{Z}
\\[5pt]
\nabla v (t,x) &= \begin{cases}  F_0 + a \otimes \nu    & \quad   k< x \cdot \nu < k + \theta  \\
                                                F_0 + b \otimes \nu   &  \quad  k + \theta < x \cdot \nu < k + 1  \\
\end{cases}   \qquad k \in \mathbb{Z}
\end{aligned}
\end{equation}
and the deformation gradient is presented in Figure \ref{Figoscillation}.

A commonly used notion in elasticity theory is rank-one convexity. Recall that $W(F)$ is rank-one convex on the domain $D$ if for any
$F \in D$ 
\begin{equation}\label{ROC}
\sum_{i,j, \alpha, \beta} \frac{\del^2 W}{\del F_{i \alpha} \del F_{j \beta}} (F) \;  \nu_\alpha \nu_\beta \; \xi_i \xi_j  > 0 \, ,
\quad   \xi \in \R^d  - \{0\} \, , \, \nu \in \mathcal{S}^{d-1}  \, .
\tag {ROC}
\end{equation}
Observe that :

\begin{lemma}
If condition \eqref{cond} is satisfied then \eqref{ROC} is violated
\end{lemma}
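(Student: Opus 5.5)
Condition \eqref{cond} states that for every $t \in [1,2]$
$$
\Big(\frac{\del W}{\del F}(tF_- + t(b-a)\otimes\nu) - \frac{\del W}{\del F}(tF_-)\Big)\nu = -(b-a)\otimes\nu\,\nu = -(b-a),
$$
using $|\nu|=1$. If $b=a$ there is no discontinuity and the construction is vacuous, so I assume $b \ne a$ and set $\xi := b-a \ne 0$. The plan is to contract this identity with $\xi$ and rewrite the left side as an integral of the second derivative of $W$ along the rank-one segment joining $tF_-$ to $tF_+$. By the fundamental theorem of calculus applied to $s \mapsto \frac{\del W}{\del F}(tF_- + s\,t\,\xi\otimes\nu)$, $s\in[0,1]$, the $i$-th component of the left side equals
$$
t\int_0^1 \frac{\del^2 W}{\del F_{i\alpha}\del F_{j\beta}}\big(tF_- + s t\,\xi\otimes\nu\big)\,\xi_j\nu_\beta\,\nu_\alpha\, ds .
$$

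Contracting with $\xi_i$ gives, for each $t\in[1,2]$,
$$
t\int_0^1 \frac{\del^2 W}{\del F_{i\alpha}\del F_{j\beta}}\big(tF_- + st\,\xi\otimes\nu\big)\,\nu_\alpha\nu_\beta\,\xi_i\xi_j\, ds = -|\xi|^2 < 0 .
$$
Since $t>0$, the integrand, which is continuous in $s$ because $W\in C^2$, has negative integral over $[0,1]$. It must therefore be negative at some $s_0\in[0,1]$, and by continuity on a subinterval. At the matrix $\bar F = tF_- + s_0 t\,\xi\otimes\nu$ the acoustic form $\frac{\del^2 W}{\del F_{i\alpha}\del F_{j\beta}}(\bar F)\nu_\alpha\nu_\beta\xi_i\xi_j$ is then strictly negative for the nonzero vector $\xi$ and the unit vector $\nu$, which contradicts \eqref{ROC} at $\bar F$. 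Hence \eqref{ROC} fails.

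This already yields a family of failure points $\bar F$, one for each $t\in[1,2]$, all lying on rank-one segments between $tF_-$ and $tF_+$. This is consistent with the phase-transition interpretation: $W$ is non-convex along the rank-one direction $\xi\otimes\nu$. The only real care needed is the bookkeeping of index contractions. The factor $\nu$ from the viscous term $(b-a)\otimes\nu$ contracts as $(\xi\otimes\nu)\nu = \xi$, and the derivative of $s\mapsto tF_-+st\,\xi\otimes\nu$ is $t\,\xi\otimes\nu$. Both facts are needed to produce the quadratic form appearing in \eqref{ROC}. The conclusion also requires that the domain $D$ in \eqref{ROC} be taken to contain these segments, which I would state explicitly: the lemma asserts failure on any domain containing the interpolating matrices.
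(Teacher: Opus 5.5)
Your proof is correct and is essentially the paper's argument. Both apply the fundamental theorem of calculus to $\frac{\partial W}{\partial F}$ along the rank-one segment $tF_- + s\,t\,(b-a)\otimes\nu$, $s\in[0,1]$. Both then contract with $\nu$ and with $b-a$, and conclude that the acoustic form cannot be positive along the whole segment, contrary to rank-one convexity on a domain containing it. The differences are minor. You exhibit a failure point $\bar F$ directly instead of arguing by contradiction. You also make explicit the assumption $b\neq a$, which the paper's contradiction ``$0 > |b-a|^2$'' silently requires as well.
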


\begin{proof}
Indeed, using the summation convention, we express \eqref{cond} as
$$
\begin{aligned}
0 &= \Big (  \frac{\del W}{\del F_{i \alpha}} ( t F_- + t (b-a) \otimes \nu ) - \frac{\del W}{\del F_{i \alpha}} ( t F_-)  + (b - a)_i \nu_\alpha  \Big )   \nu_\alpha 
\\
&=  \left ( \int_0^1 \frac{d}{ds} \frac{\del W}{\del F_{i \alpha}} ( t F_- + s  t (b-a) \otimes \nu ) \, ds \right ) \nu_\alpha  + (b-a)_i
\\
&= t  \left ( \int_0^1  \frac{\del^2 W}{\del F_{i \alpha} \del F_{j \beta} } ( t F_- + s  t (b-a) \otimes \nu ) \, ds \right ) (b-a)_j \nu_\alpha \nu_\beta + (b-a)_i
\end{aligned}
$$
IF $W(F)$ is rank-one convex on the domain $D$ containing $t F_- + s  t (b-a) \otimes \nu$  then  we obtain $0 > |b - a|^2$ leading to a contradiction.
\end{proof}

%\vfil\eject

\section{Compressible Navier-Stokes with non-monotone pressures}\label{sec:comprns}

\subsection{The compressible Navier-Stokes system in one space dimension}

Consider the one-dimensional version of the compressible Navier-Stokes system in Eulerian coordinates:
\begin{equation}
\label{compNS}
\begin{aligned}
\rho_t  + (\rho u)_y  &= 0
\\
(\rho u)_t  + ( \rho u^2  +  p (\rho) )_y  &=  \mu u_{y y}
\end{aligned}
\end{equation}
where $\rho (t,y)$ and $u(t,y)$ are the density and velocity of the fluid expressed in Eulerian coordinates $(t,y)$ while $\mu > 0 $ is the viscosity.

Consider a solution where $u$ is continuous but $u_y$ and $\rho$ may experience jump discontinuities on a moving interface.
The Rankine-Hugoniot jump conditions across a jump discontinuity that moves with speed $s$
have the form
\begin{equation}\label{RHgd}
\begin{aligned}
\big [ \rho (u-s) \big ] &= 0
\\
\big [ \rho u (u-s) \big ] &= \big [ - p(\rho) + \mu u_y \big ]
\end{aligned}
\end{equation}
It follows that there can be solutions that are continuous in velocity $[ u ] = 0$ but discontinuous in density, $[\rho] \ne 0$, 
across the interface moving with speed $u = s$. Such solutions will satisfy the conservation of mass on the interface \eqref{RHgd}$_1$.
The conservation of momentum reduces to the condition
\begin{equation}\label{RHmom}
\big [ - p(\rho) + \mu u_y \big ] = 0
\end{equation}
Observe that since $[ u ] = 0$ across the interface the jump $[u_y]$ is well defined.

\subsubsection{Langrangian to Eulerian}
The system \eqref{longonev} is expressing the one-dimensional gas dynamics equations in Lagrangian cooardinates.
 It is now written in the notation $w$ for the strain and $v$ for the velocity
to avoid confusion with $(\rho, u)$ in \eqref{compNS} 
and it now reads
\begin{equation}
\label{longonev1}
\begin{aligned}
w_t &= v_x
\\
v_t &= \sigma(w)_x +  \del_x \left ( \frac{\mu}{w} v_{x} \right )
\end{aligned}
\end{equation}
The equivalence is effected via the transformation
$$
w = \frac{\del y}{\del x} \, , \quad v = \frac{\del y}{\del t}
$$
and setting
\begin{equation}\label{transf}
\frac{\del y}{\del t} (t,x) = v (t, x) = u (t, y(t,x)) \, , \quad \rho (t, y(t,x))  = \frac{1}{\frac{\del y}{\del x} (t,x)} = \frac{1}{w(t,x)}
\end{equation}
A direct calculation shows that for smooth solutions \eqref{longonev1} transforms to \eqref{compNS}. 
The same result can be achieved by a transformation of the weak forms provided that $y(t,x)$ is assumed to be a bi-Lipschitz homeomorphism, 
\cite{b-dafermos16}.

\subsubsection{Transfering the periodic solution \eqref{perY} in Lagrangian coordinates to a periodic solution in Eulerian coordinates}

Consider the special periodic solution of \eqref{longonev} that has the form \eqref{perU}, \eqref{perY}. Recall, that $a< 2a< b$ and that
for this solution
\begin{equation}\label{eqny}
\begin{aligned}
y(t,x) &= 
\begin{cases}
k v_0(\theta)  t + (x-k) a t  & \quad  \; \;  k <  x <  k + \theta
\\
 k v_0(\theta)  t + \theta a t + \big ( x - k - \theta) b t   &  k + \theta < x < k + 1
\end{cases} \, , \quad k \in \mathbb{Z} \, ,
\\
v(t,x)  &=
\begin{cases}
k v_0(\theta)  + (x-k) a   & \quad  \; \;  k <  x <  k + \theta
\\
 k v_0(\theta)  + \theta a  + \big ( x - k - \theta) b    &  k + \theta < x < k + 1
\end{cases} \, , \quad k \in \mathbb{Z} \, .
\end{aligned}
\end{equation}
where $v_0 (\theta) = \theta a + (1-\theta) b$ and $0 < \theta < 1$. Also, recall that
\begin{equation}\label{eqnw}
\begin{aligned}
w(t,x) &= 
\begin{cases}
t a & \quad  \; \;  k <  x <  k + \theta
\\
 t b  &  k + \theta < x < k + 1
\end{cases} \, , \quad k \in \mathbb{Z} \, ,
\end{aligned}
\end{equation}

When we transform \eqref{eqny} and \eqref{eqnw} to Eulerian coordinates via the transformation \eqref{transf}
we obtain the following Eulerian form
\begin{align}
u(t,y) &= \frac{y}{t}
\label{eqnu}
\\
\rho(t,y) &= 
\begin{cases} \;  \frac{1}{ta}  \quad & \quad  0 < \frac{y}{t}  - k v_0 (\theta) <  a \theta   \\[5pt]
                      \;  \frac{1}{tb}  \quad &  \; \; a \theta < \frac{y}{t}  - k v_0 (\theta) < v_0 (\theta)
\end{cases}
\quad k \in \mathbb{Z}
\label{eqnrho}
\end{align}
The reader should note that 
$$
\bar \rho (t, y) = \frac{1}{ta} \, , \quad \bar u (t,y) = \frac{y}{t}   \, , \quad  a > 0
$$
is an exact solution of \eqref{compNS}

For \eqref{eqnu}-\eqref{eqnrho} note that : 
\\
(i) it has discontinuities in $\rho$ on the lines $\xi_k (t) = k t v_0 (\theta)$ and 
$\xi_{k, \theta} = k t v_0 (\theta) + t a \theta$. 
\\
(ii) On the line $\xi_k (t)$ we have
$$
\frac{d \xi_k}{dt} = k v_0(\theta) = u (t,y) \Big |_{y = \xi_k (t)}
$$
(iii) On the line $\xi_{k, \theta} (t)$ we have
$$
\frac{d \xi_k}{dt} = k v_0(\theta)  + a \theta = u (t,y) \Big |_{y = \xi_{k, \theta} (t)}
$$
(iv) Let us denote 
$$I_k = \{ (t, y) : 0 < y - k t v_0 (\theta) < t a \theta \} \, ,  \quad 
J_k = \{ (t, y) : t a \theta < y - k t v_0 (\theta) < t v_0 (\theta) \} \, .
$$ 
Then we compute on each domain $I_k$ and $J_k$ that
$$
\del_t \rho + \del_y (\rho u) = 0
$$
and 
$$
\begin{aligned}
\del_t (\rho u)  + \del_y (\rho u^2 ) &= 0 
\\
\del_y ( - p(\rho) + \mu \frac{\del u}{\del y} ) = 0
\end{aligned}
$$
that is the equations \eqref{compNS} are satisfied on each of $I_k$, $J_k$. 
\\
(v) Suppose that we impose the hypothesis on the pressure
\begin{equation}\label{hyppr}
p \left ( \frac{1}{at} \right ) = p \left ( \frac{1}{bt} \right )  \quad \mbox{for \;  $t \in [1,2]$}
\tag {AP}
\end{equation}
then on the interface $\xi_k (t) = k t v_0 (\theta)$ we have
$$
[ \rho u (u - \dot \xi_k) ] = 0 ,   [ - p (\rho) + \mu u_y  ] = 0
$$
The same relations also hold on the interface $\xi_{k,\theta} (t) = k t v_0 (\theta) + t a \theta$. We thus conclude

\begin{proposition}
Let the pressure satisfy \eqref{hyppr}. Then if $0< a < 2 a < b < 2b$ then \eqref{eqnu}-\eqref{eqnrho} define a weak solution 
of the compressible Navier-Stokes system \eqref{compNS}
defined for $y \in \R$,  $t \in [1,2]$.
\end{proposition}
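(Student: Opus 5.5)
The approach is to verify directly that the piecewise smooth pair \eqref{eqnu}--\eqref{eqnrho} satisfies the weak form of \eqref{compNS}. The standard reduction for piecewise smooth functions applies: a function that solves the equations classically on each open region and satisfies the Rankine--Hugoniot conditions \eqref{RHgd} on each interface is a weak solution.

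Fix a test function $\varphi \in C_c^\infty((1,2)\times\R)$. Its support meets only finitely many of the strips $I_k$, $J_k$, and these strips are bounded by the straight lines $\xi_k(t)=ktv_0(\theta)$ and $\xi_{k,\theta}(t)=ktv_0(\theta)+ta\theta$. I will write
\[
\iint \big(\rho\varphi_t+\rho u\varphi_y\big)\,dy\,dt=\sum_k\Big(\iint_{I_k}+\iint_{J_k}\Big),
\]
and similarly for the momentum equation, written as
\[
\iint \big(\rho u\varphi_t+(\rho u^2+p(\rho))\varphi_y-\mu u_y\varphi_y\big)\,dy\,dt.
\]
Note that $u=y/t$ is smooth everywhere, so $\mu u_{yy}$ needs no interface treatment beyond the jump of $u_y$, and that jump is zero. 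On each strip the fields are smooth up to the boundary, so integration by parts via the divergence theorem in $(t,y)$ leaves two kinds of terms. The interior terms vanish by item (iv). The boundary terms are integrals along each line, with integrand $\varphi$ times $[\rho(u-\dot\xi)]$ or $[\rho u(u-\dot\xi)+p(\rho)-\mu u_y]$; here I use the normal $(-\dot\xi,1)$ up to normalization.

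Next I check the interface terms. By items (ii) and (iii), each interface moves with the fluid, $\dot\xi=u$ on it. Hence $\rho(u-\dot\xi)=0$ on both sides, and the mass jump and the convective part of the momentum jump vanish identically, whatever the values of $\rho$. The velocity $u=y/t$ is continuous with $u_y=1/t$ on both sides, so $[\mu u_y]=0$. The remaining condition \eqref{RHmom} therefore reduces to $[p(\rho)]=0$, that is, $p(1/(ta))=p(1/(tb))$ for $t\in[1,2]$, which is exactly hypothesis \eqref{hyppr}.

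The step needing a little care is item (iv) itself. On $I_k$ one has $\rho=1/(ta)$, which is constant in $y$. Mass conservation then reads $-1/(t^2a)+\rho\,u_y=-1/(t^2a)+1/(t^2a)=0$. For momentum, $\rho u=y/(t^2a)$ gives $(\rho u)_t+(\rho u^2)_y=-2y/(t^3a)+2y/(t^3a)=0$, while $p(\rho)$ is constant in $y$ and $u_{yy}=0$. The same computation holds on $J_k$ with $b$ in place of $a$. The conditions $0<a<2a<b$ are not needed for the computation; they only ensure that the underlying Lagrangian construction (through \eqref{condnonm}) and the intervals are consistent and nondegenerate. Finally, since the test function has compact support in $t\in(1,2)$ and the strips have width proportional to $t$, the sums are finite and no issue arises at the point $y=0$, $t=0$, which lies outside the time range. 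This completes the plan; I expect no genuine obstacle beyond bookkeeping of the boundary orientations.
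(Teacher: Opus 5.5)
Your proposal is correct and follows the same route as the paper. You check the equations classically on each strip $I_k$, $J_k$, use that the interfaces $\xi_k(t)$, $\xi_{k,\theta}(t)$ move with the fluid velocity (so the mass jump and the convective part of the momentum jump vanish), and, since $u_y=1/t$ on both sides, reduce $[-p(\rho)+\mu u_y]=0$ to $[p(\rho)]=0$, i.e.\ to \eqref{hyppr}; your divergence-theorem bookkeeping just makes explicit the standard fact, used implicitly in the paper, that piecewise smooth functions satisfying the equations on each piece and the Rankine--Hugoniot conditions on the interfaces are weak solutions (allowing test functions supported up to $t=1$ only adds the initial-data term from the same computation).
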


For states $a$, $b$ satisfying $0< a < 2a < b < 2b$ the condition \eqref{hyppr} can be fulfilled and results 
to nonmonotone equations for the pressure. The construction produces a periodic solution $(\rho, u)$
of \eqref{compNS} defined for $(t,x) \in [1,2]\times \R$.

\subsubsection{ Sustained oscillations}
Define the sequence $(\rho_n , u_n )$ by
\begin{equation}\label{oscisol}
u_n (t, y) = \frac{1}{n} u (t, n y) \, , \quad \rho_n (t, y) = \rho (t, n y)   \qquad y \in (-1,1), \;  t \in [1,2] \, ,
\end{equation}
where $(\rho, u)$ is given by \eqref{eqnu}-\eqref{eqnrho}, $n \in \mathbb{N}$. Then we have
$$
u_n (t,y) = \frac{y}{t} 
$$
and
$$
\rho_n (t,y) = 
\begin{cases} \;  \frac{1}{ta}  \quad &  \qquad  \quad  \frac{k}{n} v_0 (\theta) < \frac{y}{t}  <  \frac{k}{n}  v_0 (\theta) + \frac{1}{n}  a \theta   \\[5pt]
                      \;  \frac{1}{tb}  \quad & \frac{k}{n} v_0 (\theta) + \frac{1}{n}  a \theta   < \frac{y}{t}   <\frac{k+ 1}{n}  v_0 (\theta) 
\end{cases}
\quad k \in \mathbb{Z}
$$

It is easy to check that $(\rho_n, u_n)$ is a weak solution of \eqref{compNS}. Indeed, since $(\rho, u)$ is a weak solution of \eqref{compNS}
on $[1,2] \times \R$, we have for $\phi \in C^\infty_c ([1,2)\times \R)$
\begin{equation}\label{weak1}
\begin{aligned}
\iint \rho(t,y) \del_t \phi (t,y) dt dy + \iint \rho(t,y) u (t,y)  \del_y \phi (t,y) dt dy + \int \rho (1,y) \phi(1,y) dy = 0 \, .
\end{aligned}
\end{equation}
Introduce the test function $\phi (t, y) = \frac{1}{n} \varphi (t, \frac{1}{n} y )$ in \eqref{weak1} and perform a change of variables $y' = \frac{1}{n} y$.
Using \eqref{oscisol} we obtain
\begin{equation}\label{newweak1}
\begin{aligned}
\iint \rho_n (t,y) \del_t \varphi (t,y) dt dy + \iint \rho_n (t,y) u_n (t,y)  \del_y \varphi (t,y) dt dy + \int \rho_n  (1,y) \varphi(1,y) dy = 0 \, ,
\end{aligned}
\end{equation}
that is $(\rho_n, u_n)$ satisfies the weak form of \eqref{compNS}$_1$. A similar calculation shows that $(\rho_n, u_n)$ also satisfies the weak form
of \eqref{compNS}$_2$.

If the sequence $(\rho_n, u_n)$ is restricted on $Q = [1,2]\times (-1,1)$ we see that
$$
\rho_n \rightharpoonup \rho = \theta \frac{1}{ta} + (1-\theta) \frac{1}{tb} \quad \mbox{ weakly-$\star$ in  $L^\infty \big ( Q \big )$ }
$$
Of course $u_n = \frac{y}{t}$ converges strongly.

\begin{theorem}
Under the hypothesis \eqref{hyppr} there is a sequence of weakly converging initial data so that the solution $(\rho_n, u_n)$ of
\eqref{compNS} has the property that $\rho_n \rightharpoonup \rho$ weakly while $u_n \to u$ strongly.
\end{theorem}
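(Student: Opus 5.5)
The plan is to take the explicit sequence $(\rho_n,u_n)$ of \eqref{oscisol} as the required family of solutions. Its initial data are $\rho_n(1,y)=\rho(1,ny)$ and $u_n(1,y)=y$. I will verify three things: that each $(\rho_n,u_n)$ is a weak solution of \eqref{compNS} on $Q=[1,2]\times(-1,1)$; that $\rho_n$ converges weakly but not strongly, both at $t=1$ and for $t\in[1,2]$; and that $u_n$ converges strongly. The proof is a matter of assembling facts already established above.

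\emph{Step 1 (weak solutions).} By the preceding Proposition, hypothesis \eqref{hyppr} together with $0<a<2a<b<2b$ makes \eqref{eqnu}-\eqref{eqnrho} a weak solution on $[1,2]\times\R$. The reason is that the equations hold classically on each $I_k$ and $J_k$, and the Rankine-Hugoniot relations \eqref{RHgd}, \eqref{RHmom} hold across the characteristic interfaces $\xi_k$ and $\xi_{k,\theta}$. I then use the rescaling argument. Insert $\phi(t,y)=\frac1n\varphi(t,y/n)$ into the weak form \eqref{weak1} and change variables; this gives \eqref{newweak1} for the mass equation. For the momentum equation I repeat the computation, checking that each term scales consistently: $\rho u$, $\rho u^2$, $p(\rho)$ and $\mu u_y$ all transform so that the identity holds for $(\rho_n,u_n)$. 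Here $u_n=\frac1n u(t,ny)=y/t$ and $\partial_y u_n=1/t$ are unchanged, which I expect to be the point needing the most care, since it is the term where the scalings of $\rho$ and $u$ interact.

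\emph{Step 2 (weak convergence of the density).} For fixed $t$, $\rho_n(t,\cdot)$ is a periodic function of $y/t$ with period $v_0(\theta)/n$. On each period it equals $\frac{1}{ta}$ on a fraction $a\theta/v_0(\theta)$ of its length and $\frac1{tb}$ on the complementary fraction. By the standard Riemann–Lebesgue lemma for periodic oscillations, $\rho_n$ converges weak-$\star$ in $L^\infty(Q)$ to the period average. This average is constant in $y$, with the proportions computed in Eulerian coordinates. I will double-check that this matches the value $\theta\frac1{ta}+(1-\theta)\frac1{tb}$ stated above, or correct the weights to $a\theta/v_0(\theta)$ and $(1-\theta)b/v_0(\theta)$. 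In either case the limit is a constant between the two phases. The same argument at $t=1$ shows that the initial data $\rho_n(1,\cdot)$ converge weakly.

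\emph{Step 3 (no strong convergence, strong convergence of the velocity).} The density does not converge strongly, because $\rho_n^2$ converges weakly to the average of the squares. By Jensen's inequality this strictly exceeds the square of the average, since $a\neq b$; the Young measure is a nontrivial combination of $\delta_{1/(ta)}$ and $\delta_{1/(tb)}$. So the oscillations in the initial density persist on $[1,2]$. Finally, $u_n=y/t$ is independent of $n$, and therefore converges strongly to $u=y/t$.
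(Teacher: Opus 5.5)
Your route is the paper's: take the rescaled family \eqref{oscisol}, transfer the weak formulation by rescaling the test function, average the periodic density, and note that $u_n=y/t$.

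\emph{The gap is in Step 1, in the momentum equation.} The terms do \emph{not} scale consistently. Take $\phi(t,y)=\varphi(t,y/n)$, substitute $y=ny'$, and use $u(t,ny')=n\,u_n(t,y')$ and $u_y(t,ny')=\partial_y u_n(t,y')$. This gives
\begin{align*}
\iint \rho u\,\partial_t\phi+\rho u^2\,\partial_y\phi\,dy\,dt &= n^2\iint \rho_n u_n\,\partial_t\varphi+\rho_n u_n^2\,\partial_y\varphi\,dy\,dt ,\\
\iint \big(p(\rho)-\mu u_y\big)\,\partial_y\phi\,dy\,dt &= \iint \big(p(\rho_n)-\mu\,\partial_y u_n\big)\,\partial_y\varphi\,dy\,dt .
\end{align*}
The initial-data term scales like the inertial terms. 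Write $\mathcal A_n$ for the inertial part (with the initial term) of the identity for $(\rho_n,u_n)$ and $\mathcal B_n$ for its pressure--viscous part. The momentum identity for $(\rho,u)$ then only yields $n^2\mathcal A_n+\mathcal B_n=0$, not $\mathcal A_n+\mathcal B_n=0$. System \eqref{compNS} is not invariant under this scaling, so Step 1 as you describe it would stall at this point. (The $u$--$\rho$ interaction you flagged is not the issue; $\partial_y u_n$ transforms harmlessly.)

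\emph{The missing idea is that $(\rho,u)$ satisfies the two groups separately.}
\begin{itemize}
\item It is a weak solution of the pressureless system $\rho_t+(\rho u)_y=0$, $(\rho u)_t+(\rho u^2)_y=0$. Inside each phase this is a direct computation, and the interfaces are particle paths ($s=u$), so $[\rho u(u-s)]=0$.
\item By \eqref{hyppr}, $-p(\rho)+\mu u_y=-p(1/(ta))+\mu/t$ is a function of $t$ alone, so $\iint(p(\rho)-\mu u_y)\,\partial_y\phi=0$.
\end{itemize}
Each of these identities is invariant under the rescaling, hence $\mathcal A_n=\mathcal B_n=0$. The paper makes this point explicitly in the multi-dimensional case. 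For 1-d it does so in the remark after the theorem: $(\rho_n,u_n)$ solve pressureless Euler, and the total stress depends only on time.

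Alternatively, skip the rescaling. $(\rho_n,u_n)$ itself has densities $1/(ta)$ and $1/(tb)$ separated by lines $y/t=\mathrm{const}$, which are particle paths of $u_n=y/t$. So the Proposition's interface argument applies to it directly.

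\emph{On Step 2, your suspicion is correct.} In Eulerian coordinates the $a$-phase occupies the fraction $a\theta/v_0(\theta)$ of each period. Hence
\[
\rho_n\rightharpoonup \frac{a\theta}{v_0}\,\frac{1}{ta}+\frac{(1-\theta)b}{v_0}\,\frac{1}{tb}=\frac{1}{t\,v_0(\theta)},
\]
with Young measure $\frac{a\theta}{v_0}\delta_{1/(ta)}+\frac{(1-\theta)b}{v_0}\delta_{1/(tb)}$. The value $\theta\frac{1}{ta}+(1-\theta)\frac{1}{tb}$ displayed in the paper is the Lagrangian average of $1/w_n$. This does not affect the theorem, which only needs a nontrivial two-point Young measure. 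Your Step 3 is fine.
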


Note that $(\rho_n, u_n)$ are also weak solutions of the pressureless Euler equations, and 
the assumption \eqref{hyppr} guarantees that the total stress is a function of time.

\subsection{Compressible Navier-Stokes in multi-d}\label{subseq:CNS}

Consider next the compressible Navier-Stokes system in several space dimensions
\begin{equation}
\label{compns}
\begin{aligned}
\del_t \rho + \dive \rho u &= 0
\\
\del_t \rho u + \dive \rho u \otimes u  + \nabla p (\rho) &=  \dive \Big ( \mu (\nabla u + \nabla u^T) + \lambda (\dive u )  \, I \, \Big )
\end{aligned}
\end{equation}
where $\lambda$, $\mu$ are constants with $3 \lambda + 2 \mu > 0$, $\mu > 0$. 
For the purposes of this section the pressure $p(\rho)$ will be nonmonotone.

\subsubsection{ Solutions with discontinuous density and continuous velocity fields}
We write the Rankine-Hugoniot conditions for a shock solution across a jump discontinuity
with normal vector $n$ which  moves with speed $s$; they are
$$
\begin{aligned}
 [ \rho ( u \cdot n - s) ] &= 0
\\
[ \rho u_i ( u \cdot n - s ) ] &= - [p(\rho)] n_i  + \Big [ \mu  \frac{\del u_i}{\del {x_j}} n_j +   \mu \frac{\del u_j}{\del {x_i}} n_j  \Big ]   + \lambda [div u] n_i
\end{aligned}
$$
where the summation convention over repeated indices is used.

We look for solutions where $u$ is continuous across the interface. Proceeding as in Section \ref{sec:twin}, the continuity of $u$ implies that 
$\Big [\frac{\del u_i}{\del {x_j}} \Big ]  n_j =  \Big [ \frac{\del u_j}{\del {x_i}} \Big ] n_j $. In turn, this  implies $[\nabla u] = a \otimes n$
and $[\nabla u] n = [\nabla u^T] n$, where $a$ is the amplitude and $n$ the normal.  We deduce 
$$
[\nabla u] n  = (a \cdot n) n = [\div u] n
$$

We conclude that solution that $u$ is continuous and $[\rho] \ne 0$ on an interface moving with speed s has to satisfy on the interface
$$
\begin{aligned}
 &[ \rho ( u \cdot n - s) ] = 0
\\
0 &= [ - p(\rho) + (2 \mu + \lambda ) \div u ]  \, n_i  
\end{aligned}
$$
The scalar quantity $z = - p(\rho) + (2 \mu + \lambda ) \div u$ is called effective viscous flux and has to be continuous across this kind of interfaces
(with $u$ continuous).
It was identified by Hoff \cite{Hoff95} and plays  an important role in the existence theory of the compressible Navier-Stokes system.

\subsubsection{Periodic solutions in multi-dimensions}

Next, we construct a periodic solution and an associated weakly convergent sequence of
weak solutions to \eqref{compns}.
Consider first the pressureless Euler equations in $d$ dimensions,
\begin{equation}
\label{pleuler}
\begin{aligned}
\del_t \rho + \dive \rho u &= 0
\\
\del_t \rho u + \dive \rho u \otimes u   &=  0  \, .
\end{aligned}
\end{equation}
A direct computation shows that for $\rho_0 > 0$ constant, the function
\begin{equation}\label{unif}
\hat \rho = \frac{\rho_0}{t^d} \, , \quad \hat u = \frac{y}{t}  \, , \qquad y \in \mathbb{R}^d \, , \; t > 0 \, ,
\end{equation}
is an exact solution of \eqref{pleuler}. Indeed,
$$
\begin{aligned}
\left ( \frac{\del}{\del t} + \hat u_j \frac{\del}{\del y_j} \right ) \frac{y_i}{t} &= 0
\\[5pt]
\del_t \frac{\rho_0}{t^d}   + \dive \left ( \frac{\rho_0}{t^d} \frac{y}{t}  \right ) & = 0
\end{aligned}
$$

For $a,b $ positive constants and $0 < \theta < 1$ the function $(\rho, u)$ defined on the
domain $Q = [1,2] \times \R^d$ by
\begin{equation}\label{propsol}
\begin{aligned}
\rho (t,y) &= 
\begin{cases} 
\frac{a}{t^d}  \quad & \qquad \quad k t < |y| < (k + \theta) t \\[5pt]
\frac{b}{t^d}  \quad  & \; (k + \theta) t < |y| < (k + 1) t \\
\end{cases}
\; , \qquad k \in \mathbb{N}_0 = \{ 0, 1, 2, ... \} \, ,
\\[5pt]
u(t, y) &= \frac{y}{t} \, .
\end{aligned}
\end{equation}
consists of a smooth velocity field  $u$  with density field $\rho$ that has discontinuities  at interfaces as depicted in Figure \ref{fig:1}.
\begin{figure}
\begin{tikzpicture}[scale=1.1] 
\def\nn{6} 
\foreach \i in {1,3,...,\nn} { 
	\draw[red, semithick] (.45*\i,0) arc (0:360:.45*\i) node[xshift=-6]{$\frac{a}{t^2}$}; 
	\draw[semithick] ({.45*(\i+1)},0) arc (0:360:{.45*(\i+1)}) node[xshift=-6]{$\frac{b}{t^2}$}; 
}
\node[red] at (.45*\nn,0)[right] {$\cdots$}; 
\end{tikzpicture} 
\caption{ density jumps at moving interfaces - dimension $d=2$}\label{fig:1}
\end{figure}
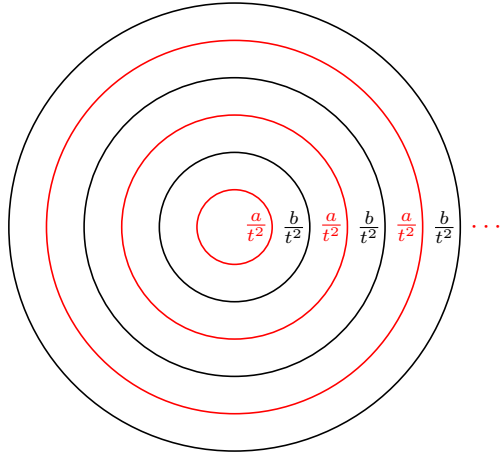 
Note that $(\rho, u)$ is an exact solution in each of the annuli $k t < |y| < (k + \theta) t$ and $(k + \theta) t < |y| < (k + 1) t$, $k \in \mathbb{N}_0$.

The discontinuities move on the shock surfaces $S_k$ defined by $\xi_k (t) = k t \hat y$, $\hat y  = \frac{y}{|y|} \in \mathcal{S}^{d-1}$ 
and on the shock surfaces $S_{k + \theta}$ defined by $\xi_{k + \theta}  (t) = (k+\theta)  t \hat y$, $\hat y  = \frac{y}{|y|} \in \mathcal{S}^{d-1}$.
On the shock surface $S_k$ the Rankine-Hygoniot conditions are satisfied:
$$
\begin{aligned}
-s [\rho] + \nu_j [\rho u_j ] &= (\nu_j u_j - s ) [\rho] = (\hat y \cdot u |_{S_k} -  |\dot \xi_k | )   [\rho] = 0
\\
-s [\rho u_i ] + \nu_j [\rho u_i u_j ] &= u_i  (\hat y \cdot u |_{S_k} -  |\dot \xi_k | )  [\rho] = 0 \, .
\end{aligned}
$$
A similar calculation shows that the Rankine-Hugoniot conditions are also satisfied on $S_{k + \theta}$. We conclude that $(\rho, u)$
is a weak solution of \eqref{pleuler} on the domain $\{ t >0 \} \times \R^d$.

Consider next the compressible Navier-Stokes system \eqref{compns}.
Observe that $(\hat \rho, \hat u)$ in \eqref{unif} is still
an exact solution of \eqref{compns} for any pressure function $p (\rho)$. Consider $(\rho, u)$ defined by \eqref{propsol}
and test under what conditions it is a weak solution on the domain $(1,2) \times \R^d$. We examine the Rankine-Hugoniot conditions on
the surfaces $S_k$ and $S_{k+\theta}$. The balance of mass is automatically satisfied while  the balance of momentum reduces to
$$
n_i  \left [ - p(\rho) \delta_{i j} +  ( 2 \mu + \lambda ) \div u   \right ] = 0
$$
This reduces to the condition
\begin{equation}\label{hypprmd}
p \left ( \frac{a}{t^d} \right ) = p \left ( \frac{b}{t^d} \right )  \quad \mbox{for \;  $t \in [1,2]$}.
\tag {{H}$_{md}$}
\end{equation}
If $0 < a< b$ and we impose $a < \frac{b}{2^d}$ we see that we can construct  functions $p(\rho)$ that satisfy \eqref{hypprmd}. Such a function
$p(\rho)$ will be non-monotone. 

\begin{proposition}
Let the pressure satisfy \eqref{hypprmd} for some $0 < \frac{a}{2^d} < a < \frac{b}{2^d} < b$. Then, \eqref{propsol} is a weak solution,
defined for $y \in \R$,  $t \in [1,2]$, 
of the compressible Navier-Stokes system \eqref{compns} with non-monotone pressure.  
\end{proposition}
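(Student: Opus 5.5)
The plan is to verify the weak formulation directly. I would split $(1,2)\times\R^d$ into the open annular regions $A_k=\{kt<|y|<(k+\theta)t\}$ and $B_k=\{(k+\theta)t<|y|<(k+1)t\}$, which are bounded by the smooth space-time hypersurfaces $S_k$ and $S_{k+\theta}$. In each region I would check the equations classically, and then I would check the Rankine--Hugoniot conditions across each interface. The standard divergence-theorem argument then gives the weak form for test functions in $C^\infty_c$. Only finitely many interfaces meet the support of a test function, so summing over $k$ causes no difficulty. The origin, where $S_0$ degenerates, lies inside the ball $|y|<\theta t$ on which $\rho=a/t^d$ is smooth; so it creates no interface at all.

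\emph{Step 1 (interior).} In each region, $\rho=c/t^d$ with $c\in\{a,b\}$ and $u=y/t$. As computed for \eqref{unif}, this pair solves \eqref{pleuler}. Moreover, $\nabla u=t^{-1}I$, so $\div(\mu(\nabla u+\nabla u^T)+\lambda(\div u)I)=0$. Since $\rho$ is spatially constant, $\nabla p(\rho)=0$. Hence \eqref{compns} holds classically in each region, for any $p$.

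\emph{Step 2 (interfaces).} On $S_{k}$ (and likewise on $S_{k+\theta}$), the unit normal is $\nu=\hat y$. The normal speed is $s=k$ (respectively $k+\theta$), and this equals $u\cdot\nu=|y|/t$ on the surface. Thus $u\cdot\nu-s=0$ on both sides. The velocity $u$ and its gradient are continuous, so the flux-jump terms vanish, $[\rho(u\cdot\nu-s)]=0$ and $[\rho u_i(u\cdot\nu-s)]=0$. The viscous stress jump is also zero. The momentum condition therefore reduces to the effective-viscous-flux condition derived above, namely $[-p(\rho)+(2\mu+\lambda)\div u]\nu=-[p(\rho)]\nu=0$. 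This is exactly \eqref{hypprmd}, which holds because $a/t^d$ and $b/t^d$ are the two states for $t\in[1,2]$.

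\emph{Step 3 (assembly).} Let $\phi$ be a test function and let $D$ be a region. I would integrate $\rho\del_t\phi+\rho u\cdot\nabla\phi$, and the corresponding momentum expression, over each region $D$ and integrate by parts. The volume terms vanish by Step 1. The boundary terms pair up across each interface into jump expressions, and these vanish by Step 2. This yields the weak form. For non-monotonicity, I would note that \eqref{hypprmd} forces $p$ to take equal values at the distinct densities $a/t^d<b/t^d$. The hypothesis $a<b/2^d$ makes the ranges $[a/2^d,a]$ and $[b/2^d,b]$ disjoint, so such a $p$ exists, but it cannot be monotone. The main point needing care is Step 2: one must confirm that each interface is a characteristic surface moving with the fluid, and that continuity of $u$ kills every viscous jump term. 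Both facts follow from the explicit form $u=y/t$.
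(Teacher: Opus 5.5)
Your proposal is correct and follows the paper's argument. The paper likewise notes that $\rho=c/t^d$, $u=y/t$ solves \eqref{compns} classically in each annulus for any $p$, and that on the characteristic surfaces $S_k$, $S_{k+\theta}$ mass balance holds automatically while the momentum Rankine--Hugoniot condition reduces to $[p(\rho)]=0$, i.e.\ \eqref{hypprmd}. Your remarks on the origin and on the divergence-theorem assembly fill in details the paper leaves implicit. One small precision: \eqref{hypprmd} only rules out a \emph{strictly} monotone $p$, since a pressure that is constant on $[a/2^d,b]$ also satisfies it.
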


\subsubsection{ Sustained oscillations}.
The function \eqref{propsol} is rescaled  to provide sustained oscillations as follows:
\begin{equation}\label{oscisol2}
\begin{aligned}
u_n (t, y) &= \frac{1}{n} u (t, n y) = \frac{y}{t}  \\
 \rho_n (t, y) &= \rho (t, n y)   
\end{aligned}
\quad , \qquad   t \in [1,2] \, , \quad y \in \R^d \\[5pt]
\end{equation}
where $(\rho, u)$ is given by \eqref{propsol}, $n \in \mathbb{N}$. 

Note the direct calculations
\begin{align*}
\del_t \rho_n + \dive \rho_n u_n &=  \big ( \del_t \rho + \dive \rho u \big ) \Big |_{(t, ny)}
\\
\del_t  (\rho_n u_n)  + \dive ( \rho_n u_n \otimes u_n ) &=  \frac{1}{n} \big ( \del_t  (\rho u)  + \dive ( \rho u \otimes u \big ) \Big |_{(t, ny)}
\\
 \nabla p(\rho_n) + \dive \Big ( \mu (\nabla u_n + \nabla u_n^T) + \lambda (\dive u_n )  \, I \, \Big ) &= n  \left [  \nabla p(\rho) 
 + \dive \Big ( \mu (\nabla u + \nabla u^T) + \lambda (\dive u )  \, I \, \Big ) \right ] \Big |_{(t, ny)}
 \end{align*}
 The equation \eqref{compns} is not invariant under the scaling \eqref{oscisol}, nevertheless because $(\rho, u)$ in \eqref{propsol} satisfies
 simultaneously both the pressureless Euler equation \eqref{pleuler} and the steady equation
 $$
  \nabla p(\rho) + \dive \Big ( \mu (\nabla u + \nabla u^T) + \lambda (\dive u )  \, I \, \Big ) = 0 \, ,
 $$
 it follows that $(\rho_n, u_n)$ in \eqref{oscisol2} satisfies the compressible Navier-Stokes system.
 
 When $(\rho_n, u_n)$ is restricted to  $Q = [1,2] \times B_1$ it provides $u_n = \frac{y}{t}$ and
 \begin{equation}\label{propsol2}
\begin{aligned}
\rho_n (t,y) &= 
\begin{cases} 
\frac{a}{t^d}  \quad &  \qquad \frac{k}{n}   < \frac{|y|}{t}  < \frac{k + \theta}{n}   \\[5pt]
\frac{b}{t^d}  \quad & \quad  \frac{k + \theta}{n} < \frac{|y|}{t}  < \frac{k+1}{n}  \\
\end{cases}
\; , \qquad   k = 0, 1, ... , n-1  \, .
\end{aligned}
\end{equation}
$(\rho_n, u_n)$ are weak solutions of \eqref{compns} with the properties $u_n \to u = \frac{y}{t}$ strongly while
$$
\rho_n \rightharpoonup \rho = \theta \frac{a}{t^d} + (1-\theta) \frac{b}{t^d} \quad \mbox{ weakly-$\star$ in  $L^\infty \big ( Q \big )$ }
$$
 The Young measure associated with the sequence $\rho_n$ is 
 $$
 \nu = \theta \delta_{\frac{a}{t^d}} + (1-\theta) \delta_{\frac{b}{t^d}}
 $$
 The sequence $\{ \rho_n\}$ exhibits persistent oscillations induced  by oscillations in the initial data.
 While $(\rho, u)$ is a solution of compressible Navier-Stokes system, the reader should note that 
 $\{ \rho_n \}$ has the property that $p(\rho_n)  \rightharpoonup q$ with  in general $q  \ne p(\rho)$.

%\vfil\eject

\section{Homogenization for scalar conservation laws: Generalized kinetic solutions}\label{sec:kineticcl}

Physical phenomena that involve sustained oscillations are commonplace, with examples appearing in problems of phase transitions, 
in multi-phase fluid flows, and are quite widespread in flows associated with fluid instabilities or with turbulence.
The existence of crystals with microstructure was long known experimentally and was given an explanation via 
minimizing sequences starting from the seminal work of Ball-James \cite{BJ87}. It has been conjectured that microstructure
may appear in dynamical problems as at long times the dynamic behavior can mimic that of minimizing sequences;
 see \cite{BHJPS91,SH92,Sengul10} for interesting studies of the dynamical appearance of microstructure in phase transitions,
 and \cite{Theil98} for the dynamics of Young measures describing microstructure.

In previous sections, we gave explicit examples of oscillating solutions for viscoelastic systems describing phase transitions
and for the compressible Navier-Stokes with non-monotone pressures. 
That raises the issue of devising equations describing the effective response of persistent oscillations.
The problem goes under the name nonlinear homogenization and is a difficult problem that is 
open in most contexts.  In simple cases that involve only one oscillating variable it can be addressed
and presenting such examples is our next task.

We start with a review of the generalized kinetic solutions that pertain to 
the  problem of oscillations for scalar conservation laws.
This is followed, in the next section, by effective equations for oscillating solutions in a one-dimensional hyperbolic-parabolic 
system modeling phase transitions. Finally, in Section \ref{sec:homcomNS}, we outline two approaches for the calculation of effective equations 
for the homogenization problem of the compressible Navier-Stokes system with non-monotone pressures.

\subsection{Kinetic formulation of scalar conservation laws}\label{sec:kineticformcl}
Consider the scalar conservation law for simplicity in one dimension,
\begin{equation}\label{conlaw}
\del_t u + \del_x f(u) = 0 \, ,
\end{equation}
where $u(t,x)$ is a scalar-valued function and $f(\cdot)$ smooth. Smooth solutions for the Cauchy problem break down in finite time,
and for global solvability it is necessary to consider weak solutions involving shocks. It is then known that in order to guarantee 
uniqueness one needs to exploit the nonlinear structure of the problem.

For scalar conservation laws, this is achieved using the notion of entropy solutions introduced by Kruzhkov. 
We recall $\eta - q$ is an entropy - entropy flux pair if they satisfy the relation
$$
q^\prime (\xi) = f^\prime (\xi) \eta^\prime (\xi)
$$
where $\lambda(\xi) = f^\prime (\xi)$ is the wave speed.  Given $\eta - q$ any smooth solution of \eqref{conlaw}
satisfies the additional conservation laws
$$
\del_t \eta(u) + \del_x q(u) = 0 \, .
$$

Entropy pairs serve to introduce the notion of Kruzhkov entropy solution, by requiring that a weak solution $u(t,x)$
satisfies the entropy inequalities
\begin{equation}\label{entropysoln}
\del_t \eta (u) + \del_x q(u) \le 0 \quad \mbox{$\forall$ entropy pairs $\eta - q$ with $\eta (u)$ convex}.
\end{equation}
This notion is sufficient to guarantee uniqueness of solutions for scalar conservation laws in various contexts, {\it e.g.} \cite{b-dafermos16}.

The kinetic formulation for conservation laws provides an equivalent notion of solutions introduced by Lions-Perthame-Tadmor \cite{LPT94a}. 
The notion is based on representation of the entropy pairs $\eta - q$ via the formulas
\begin{align}
\eta (u) - \eta(0) &=  \int_0^u \eta^\prime (\xi) d\xi
\nonumber
\\
&= \begin{cases}  
        \; \; \int_\R \charf_{0 < \xi < u} \eta^\prime (\xi) d\xi  & u > 0 
        \\
        \int_\R - \charf_{u < \xi < 0} \eta^\prime (\xi) d\xi  & u < 0 
      \end{cases}
\nonumber
\\
&= \int_\R \chi( \xi, u) \eta^\prime (\xi) d\xi 
\label{repformulaeta}
\\
q(u) - q(0) &= \int_\R \lambda (\xi) \chi( \xi, u) \eta^\prime (\xi) d\xi 
\label{repformulaq}
\end{align}
The "kinetic function" $\chi(\xi, u)$ 
\begin{equation}\label{kinfunction}
\chi(\xi, u) = 
\begin{cases} 
\; \; \charf_{0  < \xi < u}  & \mbox{for} \; u > 0 \\
- \charf_{u < \xi < 0}  & \mbox{for} \;  u < 0 \\
\end{cases}
\end{equation}
serves as the kernel representing all entropy pairs.

\begin{theorem}\cite{LPT94a}
Let $u(t,x) \in L^\infty ( (0,T) \times \Tone)$. Then $u$ satisfies \eqref{entropysoln} if and only if there is a positive, bounded measure $m$
such that
\begin{equation}\label{kinform}
\del_t \chi (\xi, u) + \lambda(\xi) \del_x \chi (\xi, u) = \del_\xi m
\end{equation}
in distributions.
\end{theorem}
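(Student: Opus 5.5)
The plan is to exploit the representation formulas \eqref{repformulaeta}--\eqref{repformulaq}. Kinetic equations are tested against $\eta^\prime(\xi)$, and entropy inequalities are recovered by integrating in $\xi$. The whole argument rests on the duality between $\partial_\xi m$ tested with $\eta^\prime$ and the measure $m$ tested with $\eta^{\prime\prime}$.

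\emph{($\Leftarrow$)} Suppose \eqref{kinform} holds with $m\ge 0$ bounded. Fix a smooth convex $\eta$, and a nonnegative test function $\varphi(t,x)$. Test \eqref{kinform} with $\varphi(t,x)\eta^\prime(\xi)$; since $u\in L^\infty$, $\chi(\xi,u)$ is supported in a fixed compact $\xi$-interval, so $\eta^\prime$ may be cut off outside it. By \eqref{repformulaeta}, \eqref{repformulaq}, the left side becomes $\partial_t(\eta(u)-\eta(0))+\partial_x(q(u)-q(0))$ tested against $\varphi$. The right side is $-\iint \varphi\,\eta^{\prime\prime}(\xi)\,dm\le 0$. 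This gives \eqref{entropysoln} in distributions. Nonsmooth convex entropies follow by approximation.

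\emph{($\Rightarrow$)} This is the substantive direction. I would use the Kruzhkov-type family of entropies $\eta_k(u)=(u-k)_+$, with $q_k(u)=\mathrm{sgn}_+(u-k)(f(u)-f(k))$. Define the distribution
$$
m(t,x,\xi) := -\big(\partial_t \eta_\xi(u)+\partial_x q_\xi(u)\big),
$$
where the parameter $k=\xi$ is now viewed as a variable. By \eqref{entropysoln}, which passes to these Lipschitz convex entropies by smoothing, $m$ is a nonnegative distribution in $(t,x)$ for each $\xi$. Joint measurability and the structure in $\xi$ are handled by noting that $m$ is a distribution in all variables and is nonnegative when tested against products $\varphi(t,x)\psi(\xi)$ with $\varphi,\psi\ge0$. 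Hence $m$ is a nonnegative Radon measure on $(0,T)\times\Tone\times\R$.

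The next step is the identity. Differentiate in $\xi$: $\partial_\xi (u-\xi)_+=-\charf_{\xi<u}$ and $\partial_\xi q_\xi(u)=-f^\prime(\xi)\charf_{\xi<u}$. Then
$$
\partial_\xi m=\partial_t\charf_{\xi<u}+\lambda(\xi)\partial_x\charf_{\xi<u}.
$$
Since $\chi(\xi,u)=\charf_{\xi<u}-\charf_{\xi<0}$ and the last term is independent of $(t,x)$, this is exactly \eqref{kinform}.

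The main obstacle is showing that $m$ is a \emph{bounded} measure. First, $m$ has compact support in $\xi$: for $\xi>\|u\|_\infty$ we have $\eta_\xi(u)=q_\xi(u)=0$. For $\xi<-\|u\|_\infty$ we have $\eta_\xi(u)=u-\xi$ and $q_\xi(u)=f(u)-f(\xi)$, so $m=-(\partial_tu+\partial_xf(u))=0$, using that $u$ is a weak solution. That $u$ is a weak solution is itself implied by \eqref{entropysoln} applied with $\eta=\pm u$. To bound the total mass, test $m$ against $\varphi(t,x)\equiv 1$ on the periodic domain. Integrating against $\xi$ then recovers the quadratic entropy: $\int m\,d\xi=-(\partial_t\tfrac12 u^2+\partial_x q_2(u))$ up to a constant. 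Hence
$$
m\big((0,T)\times\Tone\times\R\big)\le \tfrac12\int u^2(0,x)\,dx - \tfrac12\int u^2(T,x)\,dx \le C,
$$
with a cutoff in time handled via weak continuity of $u$. This gives finiteness of $m$ and completes the proof.
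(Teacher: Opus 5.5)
Your proof is correct. At its core it uses the same duality as the paper: testing the kinetic defect $\partial_t\chi+\lambda(\xi)\partial_x\chi$ against $\eta'(\xi)$ is the same as testing $m$ against $-\eta''(\xi)$. The execution is different, though.

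The paper proves only the direction ``entropy solution $\Rightarrow$ \eqref{kinform}'', and it does so implicitly. It \emph{defines} the defect to be $\partial_\xi m$ and reads off $m\ge 0$ from $\langle m,\eta''\rangle\ge 0$ for all $\eta''\ge 0$. It does not say why the defect has a $\xi$-antiderivative localized in $\xi$, why positivity on tensor-product test functions gives a positive measure, or why $m$ has finite mass, and it omits the converse.

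You instead construct $m$ explicitly as the dissipation of the Kruzhkov semi-entropies $(u-\xi)_+$. This is the paper's $m$ evaluated at $\eta''=\delta_\xi$, and it supplies what the paper leaves unsaid:
\begin{itemize}
\item positivity follows directly from the entropy inequalities; your tensor-product step is fine, since mollifying with a product of nonnegative kernels gives nonnegative smooth approximations;
\item the identity $\partial_\xi m=\partial_t\chi+\lambda\partial_x\chi$ is a short computation;
\item the vanishing of $m$ for $|\xi|>\|u\|_\infty$ comes from $u$ being a weak solution, which is exactly the normalization of the antiderivative that the paper skips;
\item the mass bound comes from integrating in $\xi$.
\end{itemize}
You also prove the converse.

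Two small points to tighten.

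First, in ($\Leftarrow$), cutting off $\eta'$ changes $\eta''$ for large $|\xi|$, so you also need the given $m$ to vanish for $|\xi|>M:=\|u\|_\infty$. This holds because $\partial_\xi m=0$ there, so $m$ is independent of $\xi$ on each half-line. A nonzero positive measure of that form would have infinite mass.

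Second, in the mass bound, weak continuity of $u$ does not give traces of $\int u^2\,dx$ at $t=0$ or $t=T$. A cleaner route is as follows.
\begin{itemize}
\item Take $\psi\in C_c^\infty(\R)$ with $0\le\psi\le1$ and $\psi=1$ on $[-M,M]$.
\item Take time cutoffs $\varphi_k\in C_c^\infty(0,T)$ that increase from $0$ to $1$ near $t=0$, decrease near $t=T$, and satisfy $\varphi_k\uparrow\charf_{(0,T)}$.
\item Then $\langle m,\varphi_k\psi\rangle=\int_0^T\varphi_k'(t)\,g(t)\,dt\le\|g\|_{L^\infty(0,T)}$, where $g(t)=\int_{\Tone}\int_\R\psi(\xi)\,(u-\xi)_+\,d\xi\,dx$.
\item Since $g\ge 0$ is bounded in terms of $M$ and $|\Tone|$, monotone convergence gives finite total mass.
\end{itemize}
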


\begin{proof}
We only show that the Kruzhkov entropy solution imples \eqref{kinform}. Using the representation formulas
for the entropies one may express
$$
\del_t \eta (u) + \del_x q(u) = \int_\R \Big ( \del_t \chi (\xi, u) + \lambda(\xi) \del_x \chi (\xi, u) \Big ) \eta^\prime (\xi) d\xi  \le 0
$$
for any entropy pair $\eta - q$ with $\eta$ convex. If we define $\del_\xi m := \del_t \chi (\xi, u) + \lambda(\xi) \del_x \chi (\xi, u) $
then we have
$$
\langle \del_\xi m , \eta^\prime (\xi) \rangle = - \langle m , \eta^{\prime \prime} (\xi) \rangle \le 0 \quad \forall \; \eta^{\prime \prime} \ge 0
$$
This means that the distribution $m$ is positive and thus a positive measure.
\end{proof}

The notion of the kinetic formulation has been extended to the compressible Euler system in one space dimension
\cite{LPT94} as well as for the system of one-dimensional elasticity \cite{PT00}. The main tool is always a representation formula
for the convex entropies.  For systems, the kinetic function is associated to the Riemann function for the equation generating the entropies,
and the system that produces the kinetic formulation generally involves the presence of macroscopic moments.

\subsection{The emergence of generalized kinetic solutions from viscous limits of scalar conservation laws}\label{sec:genkinsoln}
The weak limits of kinetic functions  can be used as a tool for deriving effective equations describing oscillations, see
\cite{PT00}, \cite[Ch 4]{b-Perthame02}. We outline the method
by introducing a kinetic description for viscous limits following the approach in \cite{HT02,CP03}.

Let $\{ u^n \}$ be a sequence of approximate solutions to the initial value problem for a scalar conservation law
in one-space dimension,
\begin{equation}\label{ivpscalarvcl}
\begin{aligned}
\del_t u^n + \del_x f(u^n) &= \eps_n \del^2_x u^n
\\
u^n (x,0) &= u_0^n (x) \, ,
\end{aligned}
\end{equation}
where  $u^n : (0,T) \times \Tone \to \R$ are scalar valued
and periodic and $\eps_n > 0$ is a small parameter converging to zero.
The entropy pairs $\eta(u) - q(u)$ associated to the scalar conservation law 
satisfy the entropy equation 
\begin{equation}
\label{entropystr}
\del_t \eta(u^n) + \del_x q(u^n) = \del^2_x ( \eps_n \eta(u^n) ) - \eps_n \eta^{\prime\prime} (u^n) (\del_x u^n )^2 \, .
\end{equation}

The solutions $\{ u^n\}$ of the initial value problem \eqref{ivpscalarvcl} satisfy the a-priori bounds
\begin{align}
\label{linfbound}
\sup_{x \in \Tone} |u^n (x,t) | &\le \sup_{x \in \Tone} |u^n_0  (x) | 
\\
\label{enecl}
\int_{\Tone}  \tfrac{1}{2} |u^n|^2 (x,t) dx + \eps_n \int_0^t \int_{\Tone}  &|\del_x u^n |^2 \, dx d\tau \le \int_{\Tone} \tfrac{1}{2} |u_0^n|^2 (x) dx \, .
\end{align}
We assume the initial data are uniformly bounded  in $L^\infty (\Tone)$ and seek to derive an equation that describes
the effective response of the sequence of oscillating solutions as $\eps_n$ tends to zero. (Note, we have not assumed any genuine 
nonlinearity properties for $f(u)$.)

For simplicity of the exposition we assume the initial data are positive, $u_0^n (x) \ge 0$. Then the sequence of solutions $\{u^n\}$
satisfies that $u^n$ is nonnegative and bounded in $L^\infty$,
\begin{equation}
\label{unifbound}
0 \le  u^n (t,x) \le M := sup_n \| u^n_0 \|_\infty
\end{equation}
There exists a subsequence (again denoted by $\{ u^n \}$) and functions $u(t,x) \in L^\infty$,  $F(t,x,\xi) \in L^\infty$ such that
\begin{equation}\label{wkproperties}
\begin{aligned}
u^n &\rightharpoonup u \quad \mbox{wk-$\ast$ in $L^\infty_{t,x} $}
\\
\charf_{u^n < \xi} &\rightharpoonup F \quad \mbox{wk-$\ast$ in $L^\infty_{t,x, \xi}$} \, .
\end{aligned}
\end{equation}
The representation theory of weak limits via Young measures \cite{Tartar79,Ball89} implies that there exists a parametrized family of probability measures 
$\nu_{t,x} (\lambda)$ that along a further subsequence represents the weak limits
\begin{equation}\label{def1YM}
g(u^n) \rightharpoonup \langle \nu_{t,x} , g (\lambda) \rangle  = \int g (\lambda) \, d\nu_{t,x} (\lambda)  \qquad \mbox{wk-$\ast$  in $L^\infty (Q_T)$}
\end{equation}
for any $g (u)$ continuous. By \eqref{unifbound}, the support of the Young measure verifies $\supp \nu_{t,x} \subset [0,  M]$.

It is instructive to explore the relation between $F$ defined in \eqref{wkproperties} and the Young measure $\nu$. 
Let $\theta \in C_c (\R)$ and consider the class of entropies $g(\xi)$ generated by the formula
$$
g(u) = \int_{-\infty}^u  \theta (\xi)  \, d\xi = \int_{-\infty}^\infty  \theta (\xi) (1 - \charf_{u  < \xi } ) \, d\xi  
$$
Then $g$ is continuous and for some $m > 0$ its support $\supp g \subset [-m, \infty)$ and $g(u) = const$ for $u > m$.
Using \eqref{wkproperties},
\begin{equation}
\label{repf1YM}
g(u^n) = \int_\R  \theta (\xi) (1 - \charf_{u^n  < \xi } ) \, d\xi   \rightharpoonup \int_\R \theta (\xi) (1 - F(t,x,\xi) ) \, d\xi 
\end{equation}
weak-$\ast$ in $L^\infty$ for $\theta\in C_c (\R)$.

By comparing \eqref{def1YM} and \eqref{repf1YM} we deduce
$$
\int g (\lambda) \, d\nu_{t,x} (\lambda) = \int_\R g^\prime (\xi) (1 - F(t,x,\xi) ) \, d\xi 
$$
for functions $g \in C^1 (\R)$ with $\supp g \subset [-m. \infty)$ and $g(\xi) = const $ for $\xi \in [m, \infty)$. If we interpret $F (\cdot, \xi)$ to be the 
right continuous representative then $F(\xi) = \nu_{t,x} ( (-\infty, \xi] )$ and the formula can be deduced directly 
 by the integration by parts formula \cite[Thm 3.36]{b-Folland99}. Therefore $F$  can be thought of as the distribution function of the Young measure.

Next, the entropy dissipation is expressed via a single equation as follows. Consider entropies $\eta \in C^1_c (\R)$ with compact support
and interpret $\eta^\prime$ as a test function. Using the representation formulas, we have
$$
\begin{aligned}
\eta (u^n) &= \int_\R \eta^\prime (\xi)  (1 - \charf_{u^n < \xi} ) d\xi
\\
q(u^n) &= \int_\R \eta^\prime(\xi) \lambda (\xi)  (1 - \charf_{u^n < \xi} ) d\xi
\end{aligned}
$$
Observe that formally we may write
$$
\begin{aligned}
\eta^{\prime\prime} (u^n) \eps_n (\del_x u^n)^2 &= \int_\R \eta^{\prime\prime}(\xi) \eps_n (\del_x u^n)^2 \delta (\xi - u^n) d\xi
%\\
&= - \left\langle \eta^\prime (\xi) , \del_\xi \left ( \eps_n (\del_x u^n)^2 \delta (\xi - u^n) \right ) \right\rangle
\end{aligned}
$$

To make this rigorous, note the  estimate \eqref{enecl} implies the quantity $G^{{\eps_n}}  = \eps_n (\del_x u^n)^2  \in_b L^1_{t,x}$ is uniformly bounded in $L^1_{t,x}$.
We wish to define  $\delta(\xi - u^n ) G^{{\eps_n}}$ as a distribution in $\disxtx$ by its action on tensor products
\begin{equation}
\label{dist:basic}
\langle \delta(\xi - u^n ) G^{{\eps_n}} ,\varphi \otimes \eta' \rangle =
\int_{x,t}G^{{\eps_n}}(x,t) \varphi(x,t) \eta'(u^n (x,t)) dx dt 
\, .
\end{equation}
This follows from the Schwartz kernel Theorem: Define the linear map
$$
\mathcal{K} : C_{c}^{\infty} (\RR) \to \dis ( \Rd \times \RR^{+})
\quad \text{by} \quad \mathcal{K} \psi = G^{{\eps_n}} (x,t) \psi (u^{\eps}(x,t))
$$
If $\psi_{j} \to 0$ in $C_{c}^{\infty} (\RR)$ then 
$\mathcal{K} \psi_{j} \to 0$ in $\disxt$. The kernel Theorem implies
that $\delta(\xi - u^n ) G^{{\eps_n}}$ is well defined as a distribution
in $\disxtx$ and acts on tensor products via \eqref{dist:basic}.
Moreover,
\begin{equation}
\langle \del_{\xi}\delta(\xi - u^n ) G^{{\eps_n}} , \varphi \otimes \eta' \rangle =
-\int_{x,t}G^{{\eps_n}} (x,t) \varphi(x,t) \eta''(u^{n}(x,t)) dx dt
\, . 
\end{equation}

Thus \eqref{entropystr} is written as
\begin{eqnarray}
\label{relax:dist}
\lefteqn{ 
 \Big  \langle  \del_{t} (1 - \charf_{u^n < \xi} ) +  \lambda(\xi)  \del_x (1 - \charf_{u^n < \xi} )  - \eps_n \del^2_x (1 - \charf_{u^n < \xi} )  \, , \,
\eta'(\xi)\varphi(x,t) \Big \rangle }
                       \nonumber
\\  
& &  \qquad \qquad \qquad  \qquad   = \Big \langle \del_{\xi} \big (\delta(\xi - u^n ) G^{{\eps_n}} \big ) \, , \, \eta'(\xi)\varphi(x,t) \Big \rangle
\nonumber.
\end{eqnarray}
Since the subspace generated by the direct sum test functions
$\varphi \otimes \eta'$ is dense in 
$ C^{\infty}_{c}(\RR^{+} \times \T \times \RR)$, 
the bracket (\ref{relax:dist}) is extended to test functions
$ \theta(t,x,\xi) \in C^{\infty}_{c}(\RR^{+} \times \T \times \RR)$. So, the equation \eqref{entropystr} is interpreted via duality as an identity
in $\mathcal{D}^\prime_{t,x,\xi}$ of the form
\begin{equation}\label{genkineq}
\del_t (1 - \charf_{u^n < \xi} ) + \del_x \lambda(\xi) (1 - \charf_{u^n < \xi} ) = \eps_n \del^2_x (1 - \charf_{u^n < \xi} ) 
+ \del_\xi m^n
\end{equation}
where we have set $m^n = \eps_n (\del_x u^n)^2 \delta (\xi - u^n) $.  Now $m^n$ is a positive measure and in view of \eqref{enecl}
$m^n$ is uniformly bounded in measures and along a subsequence
$$
m^n \to m \qquad \mbox{weak-$\ast$ in measures}
$$
where $m$ is a positive bounded measure.
Passing to the limit, we conclude that $F(t,x,\xi)$ satisfies in the sense of distributions the initial value problem
\begin{equation}
\label{effeqcl}
\begin{aligned}
\del_t (1 - F) + \del_x \lambda (\xi) ( 1 - F) &= \del_\xi m
\\
F(0,x,\xi) &= \mbox{wk-$\ast$ lim}  \charf_{u_0^n (x) < \xi} \, .
\end{aligned}
\end{equation}

\begin{remark}
The equation \eqref{effeqcl} should not be confused with \eqref{kinform} the kinetic formulation for a scalar conservation law. 
The latter is an equation whose solution is constrained to be of the form $\chi (\xi, u)$ in \eqref{kinfunction}.
For positive solutions this reduces to $\chi(\xi, u) = \charf_{0 < \xi < u}$ . In the effective equation 
\eqref{effeqcl} the function $F(\xi)$ is nondecreasing with $0 \le F \le 1$; it is the distribution function of the Young measure
associated to a measure valued entropy solution. 
Perthame calls these solutions generalized kinetic solutions and used this concept to provide a proof of the DiPerna's uniqueness theorem for
entropic measure-valued solutions, \cite[Thm 4.3.1]{b-Perthame02}.
\end{remark}

%\vfil\eject

\subsection{ Representation of weak limits for nonlinear functions via the distribution function of the Young measure}\label{sec:repweaklim}

The Young measures (or parametrized measures) are the standard tool used in the representation of weak limits.
Their theory is traced back to an idea of L.C. Young that was systematized in a functional analysis framework in the
pioneering work of L. Tartar \cite{Tartar79}.  The reader is referred to Ball \cite{Ball89} and Perdregal \cite{b-Pedregal97} for systematic
expositions on the subject.

Borel measures on $\R$ are in correspondence with their associated distribution functions, {\it e.g.} \cite[Sec 1.5]{b-Folland99}. 
Due to this property, oscillations of  real-valued sequences $\{ u^n \}$ can be described using the distribution function of the Young measure.
It leads to a simpler object for the representation of weak limits, which  avoids the difficulties encountered in \cite{Ball89} 
for dealing with weak topologies on measures and is capable to describe the dynamics of oscillations.

In this subsection we review some of the technical aspects expanding on material from \cite{b-Perthame02,PT00}, 
Consider a sequence $\{ u^{n} \}$ weakly compact in $L^{1}_{loc} (Q_T)$ with $Q_T = (0,T)\times \R^d$ and fix $K$ 
a compact subset of $Q_T$. We recall that  a uniformly bounded sequence $\{ u^n \}$ in $L^1(K)$, $K$ of finite measure, 
\begin{equation}\label{L1b}
\sup_{n \in \N} \int_K |u^n | \le C  \, ,
\end{equation}
is weakly compact if and only if $\{u^n \}$ is uniformly integrable, {\it i.e.}
given $\eps > 0$ there is $\delta > 0$ such that for any measurable $E \subset K$ with $|E | < \delta$ the integral
$\int_E |u^n | < \eps$ for all $n \in \N$.

The condition of uniform integrability is equivalent to requiring that for $c > 0$, 
\begin{equation}\label{wkcompact}
\sup_{n \in \N} \int_{\{(t,x) \in K :  |u^n| \ge c \} } |u^n| \to 0 
\quad 
\text{ as $c \to \infty$}
\end{equation}
or that 
$$
\begin{aligned}
\text{ for some function $\Psi : \RR^+ \to \RR^+$ } \; 
&\text{ nondecreasing and satisfying
$\frac{\Psi ( \tau)}{ \tau } \to \infty$ as $\tau \to \infty$,}
\\
&\text{$\Psi(|u^\eps|)$ is bounded in $L^{1} (K)$.}
\end{aligned}
$$
We refer to Dunford-Schwartz \cite[pp. 289-295]{b-DS67} and 
Dellacherie-Meyer \cite[pp. 21-28]{b-DM75} for properties
of the weak topology in  $L^1$ and 
the Dunford-Pettis characterization of weak $L^1$ compactness. 
The requirement of uniform integrability \eqref{wkcompact} is placed to exclude concentrations. The simplest example 
of concentrations is given by the sequence
$$
u^n (x) =
\begin{cases}
n &x \in (0, \tfrac{1}{n})
\\
0 & \mbox{otherwise}
\end{cases}
$$
which has the limit $u^n \rightharpoonup \delta_{x=0}$ weak-$\ast$ in measures. The reader is referred to \cite{b-Pedregal97}[Ch 6.3, 6.4]
for a more sophisticated example of a sequence with concentrations and the handling  of concentrations via Chacon's biting convergence.

The Young measure theorem \cite{Ball89} implies that there exists a subsequence (for simplicity again denoted by) $\{ u^{n} \}$ 
and a parametrized family of probability measures $\nu_{(t,x)}$ such that 
$$
S(u^n ) \rightharpoonup \overline{S(u)} = \int S(\lambda) d\nu_{(t,x)} (\lambda)  \quad \mbox{weakly in $L^1(K)$}
$$
for any continuous function $S$ such that $\{ S(u^n) \}$ is sequentially weakly compact in $L^1 (K)$.

Henceforth, we focus on real valued sequences $u^n : Q_T \to \R$ and functions $S : \R \to \R$ and use the correspondence 
of Borel measures with their distribution functions on $\R$, and the representation formulas 
\eqref{repformulaeta}, \eqref{repformulaq} to derive an alternate representation of weak limits.
The notation $\chi(\xi, u)$ in \eqref{kinfunction} is used for the kinetic function and, 
after extraction of  subsequences, there is 
$u \in L^{1}_{\text loc} $ and $g(t,x,\xi)\in L^\infty$ that satisfy 
\begin{align}
\label{seno10}
 u^n &\rightharpoonup u, \quad wk-L^1_{\text loc}
\\
\label{seno11}
\chi (\xi, u^n) &\rightharpoonup g, \quad wk-\ast  \; L^\infty 
\end{align}
From the definition of weak convergence, we deduce
\begin{equation}\label{seno12}
\begin{cases}
\; \;  \;  0 \leq g(t,x,\xi) \leq 1,  &\quad \text{for} \; \xi \geq 0,\\
 -1 \leq g(t,x,\xi) \leq 0,  &  \quad \text{for} \; \xi \leq 0.
\end{cases}
\end{equation}
The main step is a remark that allows to use
$g$ in order to replace the Young measures. It is more convenient to use because
we deal with a $ L^{1}_{{\text loc} ; \;  t,x}(L^{1}_{\xi}) $ function,

\begin{proposition}
Under the framework \eqref{seno10}, \eqref{seno11}
the function $g$ belongs to 
$L^{1}_{{\text loc} ; \;  t,x}(L^{1}_{\xi})$,
and satisfies for $\text{a.e.} \; (t,\;x)$,
\begin{equation}\label{seno13}
\int_{\R} \vert g(t,x,\xi) \vert \;d\xi =\text{w-lim}\vert u^n (t,x)
\vert \; \; 
\in L^{1}_{loc},
\end{equation}
\begin{equation}\label{seno14}
 \int_{\R} g(t,x,\xi)\;d\xi = u(t,x), 
\end{equation}
\begin{equation}\label{seno15}
\int_{\R}  S'(\xi) g(t,x,\xi) \; d\xi =\text{w-lim} \;
S\big(u^n (t,x)\big),
\quad \forall S'\in L^\infty, \; S(0)=0,
\end{equation}
Moreover,
\begin{equation}\label{seno16}
\text{w-lim} \; S\big(u^n (t,x)\big) = S(u) \, ,
\;  \forall S'\in L^\infty, \; S(0)=0, 
\quad \text{if and only if} \quad
g =\chi (\xi, u)   \, .
\end{equation}
\end{proposition}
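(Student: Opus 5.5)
The whole argument rests on the pointwise identities satisfied by the kinetic function $\chi(\xi,u)$ in \eqref{kinfunction}. For every real $u$,
\begin{equation*}
\int_\R |\chi(\xi,u)|\,d\xi = |u|, \qquad \int_\R \chi(\xi,u)\,d\xi = u, \qquad \int_\R S'(\xi)\chi(\xi,u)\,d\xi = S(u)-S(0),
\end{equation*}
where the last one is exactly the representation formula \eqref{repformulaeta}. Moreover $|\chi(\xi,u)| = \sgn(\xi)\chi(\xi,u)$, since $\chi$ has the sign of $\xi$. The plan is to test the weak-$\ast$ convergence \eqref{seno11} against functions of the form $\varphi(t,x)\psi(\xi)$ and pass to the limit in these identities.

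\textbf{Step 1: truncated identities.} Fix $\varphi \in C_c(Q_T)$ with $\varphi\ge0$, supported in a compact $K$, and fix $R>0$. Using $|\chi| = \sgn(\xi)\chi$, we have
\begin{equation*}
\iint \varphi\, \charf_{|\xi|<R}\,|\chi(\xi,u^n)| = \iint \varphi\,\sgn(\xi)\charf_{|\xi|<R}\,\chi(\xi,u^n) \longrightarrow \iint \varphi\,\sgn(\xi)\charf_{|\xi|<R}\, g.
\end{equation*}
This is legitimate because $\varphi\,\sgn\,\charf_{|\xi|<R}$ lies in $L^1_{t,x,\xi}$. By \eqref{seno12} the right-hand side equals $\iint \varphi\,\charf_{|\xi|<R}|g|$. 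On the other hand, the left-hand side equals $\int \varphi\, \min(|u^n|,R)$. Let $\overline{|u|}$ denote the weak $L^1$ limit of $|u^n|$, which exists along a subsequence by the weak compactness assumed in \eqref{seno10}.

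\textbf{Step 2: removing the truncation (main obstacle).} The key estimate is
\begin{equation*}
0\le \int_K \bigl(|u^n|-\min(|u^n|,R)\bigr) \le \int_{\{|u^n|\ge R\}\cap K}|u^n|.
\end{equation*}
By uniform integrability \eqref{wkcompact}, the right-hand side tends to $0$ as $R\to\infty$, uniformly in $n$. Hence
\begin{equation*}
\iint \varphi\,\charf_{|\xi|<R}|g| \to \int \varphi\,\overline{|u|} \quad \text{as } R\to\infty.
\end{equation*}
By monotone convergence, the left-hand side converges to $\iint\varphi|g|$. This gives $g\in L^1_{loc;t,x}(L^1_\xi)$ and, since $\varphi\ge 0$ is arbitrary, the identity \eqref{seno13}. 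This interchange of the limits $n\to\infty$ and $R\to\infty$ is where the absence of concentration is used; it is the delicate point of the proof.

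\textbf{Step 3: the remaining identities.} The same scheme applies to \eqref{seno14} and \eqref{seno15}, with test functions $\varphi(t,x)\charf_{|\xi|<R}S'(\xi)$ and $S'\in L^\infty$. The truncation error is now bounded by $\|S'\|_\infty \int_{\{|u^n|\ge R\}\cap K}|u^n|$, which is controlled uniformly in $n$ as in Step 2. For the $R\to\infty$ limit on the $g$ side we use dominated convergence, which is available because $g\in L^1_\xi$ by Step 2. Identity \eqref{seno14} is the special case $S(u)=u$. Note that $|S(u^n)|\le \|S'\|_\infty |u^n|$, so $\{S(u^n)\}$ is weakly compact in $L^1$ and its weak limit exists along a subsequence.

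\textbf{Step 4: the characterization \eqref{seno16}.} If $g=\chi(\xi,u)$, then \eqref{seno15} together with the pointwise identity for $\chi$ gives $\text{w-lim}\,S(u^n) = S(u)$.

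Conversely, suppose $\text{w-lim}\,S(u^n) = S(u)$ for all admissible $S$. Then \eqref{seno15} gives, for a.e. $(t,x)$,
\begin{equation*}
\int_\R S'(\xi)\bigl(g-\chi(\xi,u)\bigr)\,d\xi = 0 \quad \text{for all } S'\in L^\infty.
\end{equation*}
The set of admissible $S'$ is uncountable, so we restrict to a countable dense family of $S'$, for instance indicators of rational intervals. This yields a single null set outside of which the identity holds for the whole family. Since $g-\chi(\xi,u)$ lies in $L^1_\xi$, it follows that $g=\chi(\xi,u)$ a.e.
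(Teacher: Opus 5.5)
Your proof is correct and follows essentially the same route as the paper. You truncate in $\xi$ at level $R$ and test \eqref{seno11} against $\varphi(t,x)\,\sgn(\xi)\charf_{|\xi|<R}$ or $\varphi(t,x)\,S'(\xi)\charf_{|\xi|<R}$. You then kill the tails $(|u^n|-R)_+$ by uniform integrability and use $g\in L^1_\xi$ to remove the truncation on the limit side. Your version is slightly tighter in two places: the quantitative tail bound gives \eqref{seno13} as an identity, whereas the paper's duality step only records $\iint_K\int|g|\le\liminf\iint_K|u^n|$. You also spell out the countable-family argument for the converse in \eqref{seno16}, which the paper simply calls a direct consequence of \eqref{seno15}.
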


\begin{proof} The definition of weak convergence applied to \eqref{seno11} implies that for $K$ compact in  $Q_T$ we have
\begin{equation}\label{defwkstar}
\iint_K \int_\R \chi(\xi, u^n ) \psi (t, x) \phi(\xi) dt dx d\xi \to \iint_K \int_\R  g(t,x,\xi) \psi (t, x) \phi(\xi) dt dx d\xi 
\end{equation}
for $\psi \in L^1(K)$, $\phi \in L^1(\R)$. For $\xi \ge 0$, consider a test function
with $\psi \ge 0$ supported in a compact $K$ and $\phi \ge 0$ compactly supported in $(0, \infty)$. Then \eqref{seno12}  implies
$$
0 \le \iint_K \int_\R  g(t,x,\xi) \psi (t, x) \phi(\xi) dt dx d\xi \le \iint_K \int_\R  \psi (t, x) \phi(\xi) dt dx d\xi \, .
$$
Hence $0 \le g(t,x,\xi) \le 1$ for a.e. $\xi \ge 0$, $(t,x) \in K$. A similar argument gives $-1 \le g(t,x,\xi) \le 0$
for $\xi \le 0$, $(t,x) \in K$, thus giving \eqref{seno12}.

To prove $g\in L^{1}_{loc; t,x} ( L^{1}_{\xi})$, note that \eqref{defwkstar} a-fortiori holds for test functions $\psi \in L^\infty(K)$ and
$\phi \in L^1(\R)$ with $0 \le (\sgn \xi ) \phi (\xi) \le 1$. Moreover,
$$
\begin{aligned}
\Big | \iint_K  \psi (t,x) \Big ( \int_\R \chi(\xi, u^n) \phi(\xi) d\xi \Big ) dt dx \Big | &\le \| \psi \|_{\infty} \iint_K \Big ( \int_\R | \chi(\xi, u^n) | d\xi \Big ) dt dx
\\
&= \| \psi \|_{\infty} \iint_K |u^n|  dt dx
\end{aligned}
$$
Passing to the limit,
$$
\begin{aligned}
\Big | \iint_K  \psi (t,x) \Big ( \int_\R g(t,x,\xi) \phi(\xi) d\xi \Big ) dt dx \Big | \le \| \psi \|_{\infty}  \liminf  \iint_K |u^n|  dt dx 
\end{aligned}
$$
Apply this to a sequence $\{ \phi_k \} \subset  \L^1 (\R)$ such that $0 \le (\sgn \xi) \phi_k  \le 1$ and $\phi_k \to \sgn \xi$ pointwise, to
deduce
$$
\frac{1}{ \| \psi \|_{\infty}} \Big | \iint_K  \psi (t,x) \big ( \int_\R |g(t,x,\xi) |  d\xi \big ) dt dx \Big | \le  \liminf  \iint_K |u^n|  dt dx \, .
$$
Using that $\big ( L^1 \big)^\prime = L^\infty$ and the Hahn-Banach Theorem gives
$$
\begin{aligned}
\iint_K \int_{\RR} |g| &\le  \liminf \iint_K \int_{\RR} |\chi (\xi, u^n)|
= \liminf \iint_K |u^n|
\end{aligned}
$$
and thus $g\in L^{1}_{loc; t,x} ( L^{1}_{\xi})$.

Next, we prove the identity  \eqref{seno14},
the other identities are variants and we will give a detailed proof of a similar statement Proposition \ref{prop65}. We introduce two real numbers, $R>0$ (large) and $S<0$ (small),
we have
\begin{equation}\label{seno17}
u^n = \int_{S}^{R} \chi (\xi, u^n) \; d\xi
+\int_{R}^{+\infty}
\chi (\xi, u^n) \; d\xi+\int_{-\infty}^{S} \chi (\xi, u^n)
\; d\xi.
\end{equation}
From the definition of $\chi (\xi, u^n)$, we deduce, after
a further subsequence,
$$
\int_{R}^{+\infty}
 \chi (\xi, u^n) \; d\xi = (u^n -R)_{+} \rightharpoonup v^{R}(t,x),
\quad wk-L^1_{\text loc} \, .
$$
The Dunford-Pettis characterisation of weak compactness in
$L^{1}$ implies
that, as $R \rightarrow \infty$,
\begin{equation}\label{seno18}
v^{R}(t,x) \rightharpoonup 0, \quad wk-L^1_{\text loc}. 
\end{equation}
In the same way, we find a function $v_{S}$ which also satisfies
\eqref{seno18}, and passing to the limit in \eqref{seno17}, we find
$$u(t,x) = v^{R}(t,x) + v_{S}(t,x) + \int_{S}^{R} g(t,x,\xi) \;d\xi.$$
Passing to the limit in $R$ and $S$ and using that
$g\in L^{1}_{loc; t,x} ( L^{1}_{\xi})$ we recover \eqref{seno14}.
The last statement of the Lemma is a direct consequence of  \eqref{seno15}.
\end{proof}

\begin{remark}\label{seno2}
1. The relation of $g$ with 
the Young (probability) measure $\mu $ associated with $\{u^n \}$ is
$$ 
\partial_{\xi}g(t,x,\xi) =\delta(\xi=0) - \nu_{(t,x)}(\xi).
$$

\noindent
2.
Sometimes the need arises to use the function $g_+(t,x,\xi)$ defined by
\begin{equation}\label{seno20}
\charf_{\xi < u^\eps}  \rightharpoonup g_+ , \quad wk-\ast \; L^\infty  \, .
\end{equation}
Note that $0 \le g_+ \le 1$ and that,  since $\charf_{\xi < u^\eps}$ is 
nonincreasing in $\xi$, the function $g$ is nonincreasing in $\xi$ as
well. By \eqref{seno2}, the functions $g$ and $g_+$ are connected by
\begin{equation}\label{seno21}
g = g_+ \charf_{\xi > 0} - (1 - g_+) \charf_{\xi <0}
\quad \text{a.e.} \; t, x, \xi \, .
\end{equation}

\noindent
3.  Let $S$ be a convex function,  $\alpha \in \RR$ and  
consider the minimization problem
$$
\begin{aligned}
\inf \int_{\RR} S' ( \xi ) f(\xi) \, d\xi
\quad \text{over $f \in L^1 (\RR) $ such that} \; \;
&\begin{cases}
\; \; 0 \le f \le 1 &\text{$\xi > 0$} \\
-1 \le f \le 0 &\text{$\xi < 0$}
\end{cases}
\\
\text{and} \; \; &\int_{\RR} f = \alpha
\end{aligned}
$$
Brenier \cite{Brenier83} showed that this minimization problem 
achieves its minimum at $f = \chi (\xi, \alpha)$, and that if $S$
is strictly convex the minimizer is unique (see \cite[Sec 2.2]{b-Perthame02} for the proof).
As an implication, for $S$ strictly convex with  
$S'\in L^\infty, \; S(0)=0$, we have
\begin{equation}\label{seno19}
\text{w-lim} \; S\big(u^n (t,x)\big)
= \int_{\R}  S'(\xi) g(t,x,\xi) \; d\xi \ge 
\int_{\R}  S'(\xi) \chi ( \xi, u (t,x)) d\xi = S(u)
\end{equation}
with equality if and only if $g(t,x,\xi) = \chi ( \xi, u (t,x))$.
Notice that, when formulated in terms of Young measures, \eqref{seno19}
amounts to Jensen's inequality.
\end{remark}

Next, consider a sequence $\{ u_n \}$ that is uniformly bounded in $L^p (K)$, for $K$ compact and $p \ge 1$,
\begin{equation}\label{Lpb}
\sup_{n \in \N} \int_K |u^n| \le C \, ,
\end{equation}
which is $L^p$-uniformly integrable, that is it satisfies for $p \ge 1$ and $c > 0$, 
\begin{equation}
\sup_{n \in \N} \int_{\{(t,x) \in K : |u^n| \ge c\} } |u^n|^p \to 0 \, , \quad \mbox{as $c \to \infty$}.
\end{equation}
After extracting subsequences (still denoted by $\{ u^n\}$)
\begin{align}
\label{seno10p}
 u^n &\rightharpoonup u, \quad wk-L^p_{\text loc}
\\
\label{seno11p}
\chi (\xi, u^n) &\rightharpoonup g, \quad wk-\ast  \; L^\infty 
\end{align}
Let $S(u)$ satisfy for $C>0$ the bound
\begin{equation}\label{Sbound1}
| S(u) | \le C (1 + |u|^p ) \, .
\tag {S$_1$}
\end{equation}
Then $|S(u)| \le 2C |u|^p$ for $|u| \ge 1$  and thus the sequence $\{ S(u^n) \}$ is uniformly integrable in $L^1(K)$.
An application of the Young measure Theorem \cite{Ball89} suggests that after extracting a subsequence there exists
a parametrized family of probability measures such that
\begin{equation}
\label{YMrep1}
S(u^n ) \rightharpoonup \overline{S(u)} = \int S(\xi) d\nu_{(t,x)} (\xi)  \quad \mbox{weakly in $L^1(K)$}
\end{equation}
for any continuous function $S$ satisfying \eqref{Sbound1}.

We prove:

\begin{proposition}\label{prop65}
Suppose that $S(u) : \R \to \R$ satisfies for $p \ge 1$ the bound
\begin{equation}\label{derb}
| S^\prime (u) | \le  C ( |u|^{p -1} + 1)
\tag {S$_2$}
\end{equation}
The weak limit $w-lim S(u^n) = \overline{S(u)}$ in \eqref{YMrep1} satisfies 
\begin{equation}\label{YMrep}
\int S(\xi) d\nu_{(t,x)} (\xi)  =  S(0) + \int_\R S^\prime (\xi) g(t,x, \xi) d\xi
\end{equation}
for any continuous $S$ satisfying \eqref{derb}.
\end{proposition}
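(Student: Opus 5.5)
The plan is to follow the truncation argument used above for \eqref{seno14}, but now with the weight $S'(\xi)$. The starting point is the pointwise identity
$$
S(u^n) - S(0) = \int_0^{u^n} S'(\xi)\,d\xi = \int_\R S'(\xi)\,\chi(\xi,u^n)\,d\xi ,
$$
which is \eqref{repformulaeta} applied with $\eta = S$. Fix $R>0$ and $T<0$, and split the $\xi$-integral into $[T,R]$, $(R,\infty)$ and $(-\infty,T)$. On $[T,R]$ the function $S'\charf_{[T,R]}$ lies in $L^1(\R)$. For any $\psi\in L^\infty(K)$ the product $\psi\otimes S'\charf_{[T,R]}$ is therefore an admissible test function in \eqref{defwkstar}. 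Hence
$$
\int_T^R S'(\xi)\chi(\xi,u^n)\,d\xi \rightharpoonup \int_T^R S'(\xi) g(t,x,\xi)\,d\xi \quad \text{weakly in } L^1(K).
$$

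The tails are handled next. For $\xi>R>0$ we have $\chi(\xi,u^n)=\charf_{\xi<u^n}$, so
$$
\int_R^\infty S'\chi(\xi,u^n)\,d\xi = \big(S(u^n)-S(R)\big)\charf_{u^n>R}.
$$
By \eqref{derb}, this is bounded by $C(|u^n|^p+1)\charf_{|u^n|>R}$. The $L^p$-uniform integrability assumption then gives
$$
\sup_n \int_K \big|(S(u^n)-S(R))\charf_{u^n>R}\big| \to 0 \quad \text{as } R\to\infty .
$$
The same bound holds for the tail over $(-\infty,T)$ as $T\to-\infty$. After extracting subsequences, these tails converge weakly to limits $v^R$ and $v_T$, and by weak lower semicontinuity of the $L^1$ norm, $\|v^R\|_{L^1(K)}+\|v_T\|_{L^1(K)}\to 0$. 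Passing to the limit $n\to\infty$ in the decomposition and using \eqref{YMrep1} on the left side gives
$$
\overline{S(u)} - S(0) = v^R + v_T + \int_T^R S'(\xi) g\,d\xi .
$$

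To let $R\to\infty$ and $T\to-\infty$, I need $S'g\in L^1_\xi$ for a.e.\ $(t,x)$. The main obstacle is that $S'$ is unbounded, so the earlier fact $g\in L^1_{loc;t,x}(L^1_\xi)$ is not enough. I would adapt the earlier duality argument. Take test functions $\phi\in L^1$ with $|\phi|\le |S'|\charf_{[T,R]}$, so that
$$
\Big|\int\!\!\int_K \psi \int \phi\,\chi(\xi,u^n)\Big| \le \|\psi\|_\infty \int_K \int_0^{|u^n|} |S'| \le \|\psi\|_\infty\, C\!\int_K(|u^n|^p+|u^n|),
$$
using \eqref{derb}. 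Passing to the limit, taking the supremum over such $\phi$, and letting $R\to\infty$, $T\to-\infty$ by monotone convergence yields
$$
\int\!\!\int_K\int_\R |S'(\xi)|\,|g|\,d\xi \le C\sup_n\|u^n\|_{L^p(K)}^p + C .
$$
So $S'g\in L^1(K\times\R)$. Dominated convergence then lets the truncation limit pass to $\int_\R S' g\,d\xi$ in $L^1(K)$, while $v^R+v_T\to0$ in $L^1(K)$. This gives \eqref{YMrep} a.e.\ in $K$. Since $K$ is arbitrary, the identity holds a.e.\ in $Q_T$, and the Young measure representation \eqref{YMrep1} identifies the left side as $\int S\,d\nu_{(t,x)}$.
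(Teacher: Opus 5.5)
Your proposal is correct and follows essentially the same route as the paper: the same splitting of $S(u^n)-S(0)=\int_\R S'(\xi)\chi(\xi,u^n)\,d\xi$ into a truncated middle part and two tails, tail control via \eqref{derb} and uniform integrability, and passage to the limit via \eqref{defwkstar} and \eqref{YMrep1}. Your extra duality step giving $S'g\in L^1(K\times\R)$ is a real improvement in rigor: the paper's closing appeal to $g\in L^1_{loc;t,x}(L^1_\xi)$ alone does not justify $\int_\R S'g\,d\xi$ when $S'$ is unbounded, and your step fills that gap.
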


\begin{proof} Observe that after integration the bound \eqref{derb} implies \eqref{Sbound1} and thus \eqref{YMrep1} holds.
The formulas \eqref{repformulaeta}, \eqref{kinfunction} give for $c > 0$
\begin{equation}
\label{repapprox}
\begin{aligned}
S(u^n) - S(0) &= \int_{-\infty}^{-c} \big ( -\charf_{u^n < \xi < 0} \big ) S^\prime(\xi) d\xi + 
\int_{-c}^{c}  \chi(\xi, u^n) S^\prime(\xi) d\xi 
+ \int_{c}^{\infty} \charf_{0 < \xi < u^n} S^\prime(\xi) d\xi
\\
&= \Psi_- (u^n, -c) + \int_{-c}^{c}  \chi(\xi, u^n) S^\prime(\xi) d\xi  + \Psi_+ (u^n, c)
\end{aligned}
\end{equation}

Using \eqref{derb} for $c >1$ we have
$$
\begin{aligned}
|\Psi_+ (u^n, c) | &= \Big | \int_{c}^{\infty} \charf_{0 < \xi < u^n} S^\prime(\xi) d\xi \Big |
\\
&= \Big | \int_{c}^{u^n} S^\prime(\xi) d\xi \Big | \charf_{c < u^n}
\\
&\le C \int_c^{u^n} (1 + \xi^{p-1} ) d\xi  \charf_{c < u^n}
\\
&\le C \big ( |u^n|^p - c^p \big ) \charf_{c < u^n}
\end{aligned}
$$
As a result, it follows
$$
\int_K \big | \Psi_+ (u^n, c)  \big | \le C \int_{\{(t,x) \in K : u^n > c \}} \big ( |u^n|^p - c^p \big )^+
$$
This implies $\Psi_+ (u^n, c) \in_b L^1(K)$ and is uniformly integrable. Hence, 
$$
\begin{aligned}
\Psi_+ (u^n, c) &\rightharpoonup \Psi_+^c (x)  \quad wk-L^1
\\
\Psi_+^c (x) &\rightharpoonup 0  \qquad wk-L^1
\end{aligned}
$$

In a similar fashion, for $c > 1$, 
$$
\begin{aligned}
|\Psi_- (u^n, -c) | &\le C \big ( |u^n|^p - c^p \big ) \charf_{u^n < - c}
\\
\int_K \big |\Psi_- (u^n, -c) \big | &\le C \int_{\{(t,x) \in K : u^n < -c\}} \big ( |u^n|^p - c^p \big )\charf_{u^n < -c}
\\
&= \int_{\{(t,x) \in K : u^n < -c\}} \big ( |u^n|^p - c^p \big )^+
\end{aligned} 
$$
$\Psi_- (u^n, -c) \in_b L^1(K)$ and is uniformly integrable, and
$$
\begin{aligned}
\Psi_- (u^n, -c) &\rightharpoonup \Psi_-^c (x)  \quad wk-L^1
\\
\Psi_-^c (x) &\rightharpoonup 0  \qquad wk-L^1
\end{aligned}
$$

Using \eqref{YMrep1} we pass to the limit in  \eqref{repapprox} and obtain
$$
\overline{S(u)}  - S(0) = \Psi_-^c (x) + \int_{-c}^c g(t,x,\xi) S^\prime (\xi) d\xi + \Psi_+^c (x)
$$
Subsequently, sending $c \to \infty$ and using $g \in L^{1}_{{\text loc} ; \;  t,x}(L^{1}_{\xi}) $ yields \eqref{YMrep}.
\end{proof}

\subsection{A proof of Tartar's theorem on cancellation of oscillations for scalar conservation laws}\label{sec:genkinproof}

The correspondence between Young measures and distribution functions 
was used in \cite{PT00} to give a particularly simple proof of Tartar's theorem on oscillations for scalar
conservation laws \cite{Tartar79}; namely, that the Young measure describing oscillations for a scalar conservation is supported in domains where 
the wave speed remains constant.

\begin{theorem}\label{seno3} 
Assume the flux $f$ is $C^{2}$ and let $\{ u^{\eps} \}$ satisfy
\eqref{seno10}, \eqref{seno20} and
\begin{equation}\label{seno22}
\del_t \eta (u^\eps) + \del_x q (u^\eps)
\; \; \; \text{lies in a compact of $H^{-1}_{loc}$} 
\end{equation}
for any $(\eta, q)$ with $\eta_u \in C_c^1 (\R)$.

Let $I = \{ \xi \in \RR : 0 < g_+ (x, t, \xi) < 1 \}$, then
for a.e. $(x,t)$ :
\begin{itemize}
\item[(i)] either $I$ an interval in which case the speed $\lambda (\xi)$ must
remain constant on $I$;

\item[(ii)] or $I$ is empty or a single point in which case $g = \charf (u, \xi)$.
\end{itemize}
\end{theorem}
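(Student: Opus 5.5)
The plan is to combine the div-curl lemma with the kinetic representation of weak limits. For entropy pairs $(\eta_1,q_1)$, $(\eta_2,q_2)$ with $\eta_i'\in C^1_c(\R)$, the hypothesis \eqref{seno22} together with the trivially compact fields lets us apply the Murat--Tartar div-curl lemma to the vector fields $(\eta_1(u^\eps),q_1(u^\eps))$ and $(q_2(u^\eps),-\eta_2(u^\eps))$. This gives the commutation relation
$$
\overline{\eta_1 q_2 - \eta_2 q_1} = \overline{\eta_1}\,\overline{q_2} - \overline{\eta_2}\,\overline{q_1} \quad \mbox{a.e. } (t,x).
$$

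Next I would rewrite both sides in terms of $g_+$ using Proposition \ref{prop65} and the representation formulas \eqref{repformulaeta}, \eqref{repformulaq}. Take the primitives normalized from $+\infty$: $\eta_i(u)=\int \eta_i'(\xi)\charf_{\xi<u}\,d\xi$ and $q_i(u)=\int \lambda(\xi)\eta_i'(\xi)\charf_{\xi<u}\,d\xi$, with compactly supported $\eta_i'$. The products become double integrals
$$
\eta_1 q_2(u)=\iint \eta_1'(\xi)\eta_2'(\zeta)\lambda(\zeta)\charf_{\xi<u}\charf_{\zeta<u}\,d\xi\,d\zeta .
$$
Since $\charf_{\xi<u}\charf_{\zeta<u}=\charf_{\max(\xi,\zeta)<u}$, the weak limit of this product is $g_+(\max(\xi,\zeta))$. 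The commutation relation then becomes
$$
\iint \eta_1'(\xi)\eta_2'(\zeta)\big(\lambda(\zeta)-\lambda(\xi)\big)\Big[g_+(\max(\xi,\zeta)) - g_+(\xi)g_+(\zeta)\Big]\,d\xi\,d\zeta=0 .
$$
This holds for all test functions $\eta_1',\eta_2'$, and by density it holds pointwise. We conclude that for a.e. $(t,x)$ and a.e. $(\xi,\zeta)$, say with $\xi<\zeta$,
$$
\big(\lambda(\zeta)-\lambda(\xi)\big)\, g_+(\zeta)\big(1-g_+(\xi)\big)=0 .
$$
Reaching this identity requires care with the countable families of test functions needed to make the a.e. set independent of them. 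I would pick a countable dense family of $\eta'$ in $C_c$, and use the monotonicity of $g_+$ in $\xi$ (Remark \ref{seno2}) to pass to a good representative.

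Finally comes the case analysis. Since $g_+$ is nonincreasing in $\xi$ with values in $[0,1]$, the set $I=\{0<g_+<1\}$ is an interval, up to endpoints. If $\xi<\zeta$ both lie in $I$, then $g_+(\zeta)>0$ and $1-g_+(\xi)>0$, so $\lambda(\xi)=\lambda(\zeta)$. Hence $\lambda$ is constant on $I$, by continuity of $\lambda=f'$ after handling null sets. If instead $I$ is empty or a single point, then $g_+$ takes only the values $0$ and $1$ a.e. and jumps once. So $g_+=\charf_{\xi<u}$, using \eqref{seno14} to identify the jump location with the weak limit $u$, and via \eqref{seno21} this gives $g=\chi(\xi,u)$.

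The main obstacle I expect is the rigorous passage from the integral identity to the pointwise statement for a.e. $(t,x)$ simultaneously in $(\xi,\zeta)$. This uses separability, a countable dense set of test functions, and the monotone representative of $g_+$, which removes ambiguity on $\xi$-null sets.
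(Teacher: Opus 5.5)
Your proposal is correct and is essentially the paper's argument. Both proofs apply the div-curl commutation relation to two kinetic-type entropy pairs, reduce to $(\lambda(\zeta)-\lambda(\xi))\,g_+(\zeta)\big(1-g_+(\xi)\big)=0$ for $\xi<\zeta$, and finish with the case analysis based on the monotonicity of $g_+$ together with \eqref{seno14} and \eqref{seno21}. The only difference is cosmetic: the paper pairs $\charf_{\xi<u}$ with $\charf_{u<\theta}$, so the product vanishes directly for $\theta<\xi$, whereas you use two entropies of type $\charf_{\xi<u}$ and the identity $\charf_{\xi<u}\charf_{\zeta<u}=\charf_{\max(\xi,\zeta)<u}$. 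These entropies are normalized at $-\infty$, not at $+\infty$ as you wrote. The two choices differ only by additive constants, to which the commutation relation is insensitive, and your treatment of the null sets in $(t,x)$ and in $\xi$ is more careful than the paper's.
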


\begin{proof}
Following the argument of Tartar for compensated compactness, we consider
two classes of entropy-entropy flux pairs of the type:
\begin{align}
&\eta_1 (u) = \int_\R \charf_{{\xi < u}} \phi (\xi) d\xi,
\quad 
&&\eta_2 (u) = \int_\R \charf_{u  < \theta} \psi (\theta) d\theta,
\\
&q_1 (u) = \int_\R \charf_{{\xi < u}} \lambda(\xi) \phi (\xi) d\xi,
\quad
&&q_2 (u)  = \int_\R \charf_{u < \theta} \lambda (\theta)
      \psi (\theta) d\theta,
\end{align}
where $\phi , \, \psi \in C_c^1 (\R)$ and we recall that $\lambda=f'$. Both
pairs are constant near infinity.
We denote with
brackets the various weak limits, in $(t,x)$ or in $(t,x,\xi)$, and 
apply the usual compensated compactness identity
$$
\langle  \eta_1 q_2 - \eta_2 q_1  \rangle =
\langle \eta_1 \rangle \langle q_2 \rangle  -  \langle \eta_2 \rangle  \langle q_1 \rangle \, .
$$

Using the representation formulas
$$
\eta_1 (u^{\eps}) \rightharpoonup
\int_\R \langle \charf_{{\xi < u^{\eps}}} \rangle  \phi (\xi) d\xi
\, , \quad
\eta_2 (u^{\eps}) \rightharpoonup 
\int_\R \langle \charf_{u^{\eps} < \theta} \rangle  \psi (\theta) d\theta
$$
and so on, we obtain
$$
\int_\R \int_\R  \big ( \lambda (\theta) - \lambda (\xi) \big )
\Big [ \langle \charf_{{\xi < u^{\eps}}} \charf_{u^{\eps} < \theta} \rangle
- \langle \charf_{{\xi < u^{\eps}}} \rangle  \; 
\langle \charf_{u^{\eps} < \theta} \rangle \Big ]
\phi(\xi) \psi(\theta) \; d\xi d\theta = 0 ,
$$
from where we conclude
\begin{equation}\label{seno23}
(\lambda (\theta) - \lambda (\xi) )
\Big [
\langle \charf_{{\xi < u^{\eps}}} \charf_{u^{\eps} < \theta}  \rangle
- \langle \charf_{{\xi < u^{\eps}}} \rangle \; \langle  \charf_{u^{\eps} < \theta} \rangle
\Big ] 
 = 0 \quad \text{a.e. $\xi , \theta$}.
\end{equation}

Recalling that $g_{+} (t,x,\xi) = \langle \charf_{{\xi < u^{\eps}}} \rangle$,
we have for $\theta < \xi$
$$
\big ( \lambda (\theta) - \lambda (\xi) \big ) \;
g_{+} (\xi) \big ( 1 - g_{+} (\theta) \big ) = 0 \, . 
$$
The function $g_{+}$ is decreasing and, due to the fact that
$g \in L^{1}_{\xi}$ and \eqref{seno21}, we have $g_{+} (-\infty) = 1$
and $g_{+} (\infty) = 0$.  The set $I$ is either empty, or a
single point, or an interval. If $I$ is an interval and
$\theta < \xi$ any interior points then 
$\lambda (\theta) = \lambda (\xi)$ and thus $\lambda$ stays constant
on $I$. If $I$ is empty or a single point, 
then $g_{+} (\xi) = \charf_{\xi < a}$ for some $a \in \RR$, and we
conclude from \eqref{seno21}, \eqref{seno2} and \eqref{seno14} that $a = u$ and
$g = \charf (u, \xi)$.  
\end{proof}

%\vfil\eject

\section{ Homogenization of one-dimensional viscoelastic models in phase transitions.} \label{sec:osci}

The problem of calculating effective equations for the propagation of oscillations in models
describing phase transitions for rate-dependent materials  can be addressed by a similar methodological approach. 
The system
\begin{equation}\label{vemodel}
\begin{aligned}
\del_t u &= \del_x v
\\
\del_t v &= \del_x (\sigma (u) + v_x ) \, ,
\end{aligned}
\end{equation}
describes shear motions of a viscoelastic material, with $u$ the shear deformation, $v$ the velocity in the shear direction,
and the total stress $S = \sigma (u) + v_x$
are defined for $0 \le x \le 1$, $t > 0$. We impose traction-free boundary conditions
\begin{equation}\label{vebc}
S (0,t) = S(1,t) = 0
\end{equation}
and consider highly oscillatory initial data
\begin{equation}\label{veid}
u^\eps (x, 0)= u_0^\eps (x) \, , \quad v^\eps (x, 0) = v_0^\eps (x) \, ,
\end{equation}
that are assumed to satisfy the uniform bounds
\begin{equation}\label{ub2data}
\| u^\eps_0 \|_{L^\infty} \le C \, , \quad  \| v^\eps_0 \|_{W^{1,\infty}} \le C   \tag{A}.
\end{equation}
As seen in the example of Section \ref{sec:1dnonmon} and Remark \ref{rmk:oscshear}, oscillating data can induce sustained oscillations in solutions
$(u^\eps, v^\eps)$ of \eqref{vemodel}-\eqref{veid}. The objective is to compute the effective equations describing the oscillations.
In the sequel, we drop the $\eps$-dependence where possible, but it will always be implied.
Existence of solutions for \eqref{vemodel} is studied in \cite{AB82} and references therein, whereas the problem of stability of solutions
with discontinuities in strain is studied in \cite{Pego87}.

\subsection{A-priori bounds and convergence framework}\label{weakconv}
We first set up a natural framework to consider the homogenization problem for \eqref{vemodel}.
The stress function $\sigma(u)$ is assumed non-monotone as appropriate for models with phase transitions.
It is postulated that  $\sigma(u) \to \pm \infty$ as $u \to \pm \infty$, and that the non-convex stored energy
$W(u) = \int_0^u \sigma (\tau) d\tau$ satisfies, for some $p > 1$ and constants $c, C  > 0$, the bounds
\begin{equation}\label{boundW}
\begin{aligned}
c | u|^p - C &\le W(u) \le C ( |u|^p  + 1)
\\
|\sigma(u)| &\le C \big ( 1 + |u|^{p-1} \big )
\end{aligned}
\tag{H$_1$} 
\end{equation}

For initial data satisfying uniform bounds \eqref{ub2data}  the formal energy identity
$$
\del_t \Big ( \frac{1}{2} v^2 + W(u) \Big ) - \del_x \big (v (\sigma (u) + v_x ) \big ) + v_x^2 = 0
$$
and the existence theory for \eqref{vemodel} yield the bound
\begin{equation}\label{energyb}
\int \tfrac{1}{2} (v^\eps)^2 (x,t)  + W(u^\eps (x,t) ) \, dx   + \int_0^t \int (v^\eps_x)^2 \le \int \tfrac{1}{2} (v_0^\eps)^2 + W( u_0^\eps ) \, dx 
= \cE^\eps (0)  \le C \, .
\end{equation}
Standard theory of weak convergence yields
$$
u^\eps \rightharpoonup u  \quad \mbox{wk-$\ast$  in $L^\infty (L^p)$}
$$

Moreover, the Aubin-Lions lemma states: Let  $X_0$, $X$, $X_1$ Banach spaces with $X_0 \subset X \subset X_1$ where the embedding
$X_0 \subset X$ is compact and the embedding $X \subset X_1$ is continuous. For $1 \le r, q \le \infty$ let 
$$
W = \big \{ u \in L^r ( (0,T) ; X_0) \; : \: \del_t u \in L^q ( (0,T) ; X_1 ) \big \}
$$ 
\begin{itemize}
\item[(i)] If $r < \infty$ then the embedding of $W$ into $L^r ( (0,T) ; X )$ is compact
\item[(ii)] If $r = \infty$ then the embedding of $W$ into $C ( [0,T] ; X )$ is compact
\end{itemize}

In the present case,  let $I= (0,1)$, $p \ge 2$ and $p^\prime \le 2$ its dual exponent. The embedding $H^1 (I) \subset L^2 (I)$ is compact while
the embedding $L^2 (I) \subset W^{-1, p^\prime} (I)$ is continuous. From the hypothesis \eqref{boundW} and \eqref{energyb} we deduce on
the one hand $v^\eps \in_b L^2 ( (0,T) ; H^1 (I) )$. On the other hand, we have 
$\sigma(u^\eps) \in_b L^\infty ( (0,T) ; L^{p^\prime} (I) )$ and $v^\eps_x \in_b L^2 ( (0,T) ; L^2 (I) )$ ; hence, using \eqref{vemodel}, 
$$
\del_t v^\eps = \del_x (  \sigma(u^\eps) + v^\eps_x  ) \in_b L^2 ( (0,T) ; W^{-1, p^\prime}  (I) ) \, .
$$
The Aubin-Lions lemma then implies
$$
v^\eps \to v  \quad  \mbox{strongly in  $L^2 \big( (0,T) ; L^2 (I) \big)$}
$$

\subsubsection{$L^\infty$ bounds for the strain}
One may get improved estimates, by adapting an idea of Andrews-Ball \cite{AB82},
 to yield uniform bounds on the shear strain (see \cite{Tzavaras24} for further details).
Using \eqref{vemodel}, we check that the function
\begin{equation}\label{defg}
g(x,t) = u(x,t) - \int_0^x v(y,t) dy  \, , \quad A(x,t) = \int_0^x v(y,t) dy
\end{equation}
satisfies the ordinary differential equation
\begin{equation}\label{eqng}
\del_t g + \sigma (g) = - \left (\int_0^1  \sigma^\prime \big ( g(x,t) + (1-s) A(x,t) \big ) ds \right ) A(x,t)
\end{equation}
Since $A(x,t)$ is estimated via \eqref{energyb} by
$$
| A(x,t)| := \Big | \int_0^x v(y, t) dy \Big | \le \Big ( \int_0^1 v^2(y,t) dy \Big )^{\tfrac{1}{2}} \le \sqrt{\cE(0)} = : M
$$
we see that $g$ satisfies the differential inequalities
\begin{align}
\label{ineq1}
\del_t g + \sigma (g)  &\le \max_{s \in [0,1] \, \; |A| \le M} \big | \sigma^\prime ( g + (1-s) A ) \big | | A|  = : B(g, M)
\\
\del_t g + \sigma (g)  &\ge  - \max_{s \in [0,1] \, \; |A| \le M} \big | \sigma^\prime ( g + (1-s) A ) \big | | A|  =   - B(g, M)
\end{align}
Define $\displaystyle{B(u,M) = \max_{s \in [0,1] \, \; |A| \le M} \big | \sigma^\prime ( u + (1-s) A ) \big | | A|}$.  Then we have:

\begin{lemma} \label{lem1}
Suppose $\sigma (u)$ has the limiting behavior
\begin{equation}\label{hypgr}
\begin{aligned}
\lim_{u \to \infty} \big ( \sigma (u) - B(u,M) \big ) &= + \infty
\\
\lim_{u \to - \infty} \big ( \sigma (u) + B(u,M) \big ) &= - \infty
\end{aligned}
\tag {H$_2$}
\end{equation}
Then solutions $(u^\eps, v^\eps)$ emanating from data \eqref{ub2data} satisfy the uniform bound
\begin{equation}\label{ubound}
| u^\eps (x,t) | \le K \, .
\end{equation}
\end{lemma}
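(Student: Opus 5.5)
The plan is a comparison (maximum-principle) argument for the ordinary differential inequalities \eqref{ineq1} satisfied by $g = u - A$ at each fixed $x$. Then we pass from $g$ back to $u$ using the uniform bound $|A| \le M$.

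First, note that the data bounds \eqref{ub2data} give a uniform bound on $g$ at $t=0$. Indeed $|u_0^\eps| \le C$, and $|A(x,0)| \le \|v_0^\eps\|_{L^\infty} \le C$, so $|g(x,0)| \le K_0$ independently of $\eps$. Also $\cE^\eps(0)\le C$ by \eqref{boundW}, so the constant $M=\sqrt{\cE(0)}$ is uniform in $\eps$, and by \eqref{energyb} the bound $|A(x,t)|\le M$ holds for all $t$.

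Next, use \eqref{hypgr} to choose $K_1 \ge K_0$ such that $\sigma(u) - B(u,M) > 0$ for all $u \ge K_1$ and $\sigma(u) + B(u,M) < 0$ for all $u \le -K_1$. At fixed $x$, consider $t \mapsto g(x,t)$. Suppose $g$ exceeds $K_1$ at some time. Let $t_0$ be the last time before that with $g(x,t_0)=K_1$. On the interval where $g \ge K_1$, \eqref{ineq1} gives
\[
\del_t g \le -\big(\sigma(g) - B(g,M)\big) < 0 .
\]
This contradicts $g$ increasing above $K_1$. The symmetric argument with the lower inequality gives $g \ge -K_1$, so $|g| \le K_1$ for all $t$.

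Finally,
\[
|u^\eps| \le |g| + |A| \le K_1 + M =: K .
\]

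The main obstacle is regularity. The pointwise-in-$x$ ODE \eqref{eqng} requires $u$ to be differentiable in $t$ for a.e. $x$, or at least absolutely continuous. It also requires the derivation $\del_t g = v_x - v = \sigma(u) + v_x - \ldots$ to be valid, and this uses the traction-free condition \eqref{vebc} to write $\int_0^x v_t\,dy = S(x,t)$. I would carry out the argument for the smooth (or strong) solutions supplied by the existence theory \cite{AB82}, where $u_t = v_x \in L^1_t$ for a.e. $x$. In that setting the comparison argument applies to absolutely continuous functions in the form $\frac{d}{dt}(g-K_1)_+ \le 0$ a.e. Since $K$ is independent of $\eps$, the bound then carries over to the whole family.
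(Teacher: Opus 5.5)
Your proposal is correct and follows essentially the same route as the paper: a maximum-principle (comparison) argument on the differential inequalities \eqref{ineq1} for $g = u - A$, starting from the uniform bound on $g(x,0)$ given by \eqref{ub2data}, followed by $|u^\eps| \le |g| + |A| \le K_1 + M$ via \eqref{defg}. Your last-crossing-time argument, which applies the strict inequality $\del_t g < 0$ on all of $\{g \ge K_1\}$ pointwise in $x$, is slightly cleaner than the paper's first-hitting-time argument, which as written only yields $g_t \le 0$ at the hitting point.
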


{\it Proof.} 
Let $g$ satisfy \eqref{eqng} and note $|g^\eps_0 (x) | \le K$ uniformly in $\eps$.  By virtue of \eqref{hypgr} we may select $g_+ > 0$, $g_- < 0$ such that
$$
\begin{aligned}
\sigma(g ) - B(g, M) > 0  \quad \mbox{for $g > g_+$}
\\
\sigma(g ) + B(g, M) < 0  \quad \mbox{for $g < g_-$}
\end{aligned}
$$
Then
$$
\min\{ -K, g_- \} < g (x,t) < \max\{K, g_+ \}  \, .
$$
Indeed, by the assumption of the data, $- K < g(x,t) < K$ for $t$ sufficiently small. Let $t$ be the first time that for some $x$ we have $g(x,t) = \max\{K, g_+ \} $.
Then \eqref{ineq1} implies that at $(x,t)$ as above it is $g_t (x,t) \le 0$ which gives a contradiction. Similarly, follows the lower bound.
Equation \eqref{defg} implies that $|u^\eps (x,t)| \le K$ uniformly in $\eps$.
\qed

\subsubsection{Regularizing effect for the total stress}
The total stress  $S = \sigma (u) + v_x$
satisfies the linear parabolic equation
\begin{equation}\label{eqstress}
\del_t S = \del_{x x} S + \sigma^\prime (u) \big ( S - \sigma (u) \big )
\end{equation}
with boundary conditions \eqref{vebc} and initial conditions
$$
S (x,0) = \sigma (u_0^\eps) + v_{0 x}^\eps \in L^\infty \, ,
$$
uniformly bounded by  \eqref{ub2data}. Standard parabolic theory then yields.

\begin{lemma}\label{regstress}
Under the hypotheses of Lemma \ref{lem1} the total stress $S^\eps$ satisfies the uniform bound
\begin{equation}\label{stressb}
| S^\eps (x,t) | \le C 
\end{equation}
for $(x,t) \in Q_T = (0,1) \times (0,T]$, and along a subsequence
\begin{equation}\label{stressconv}
S^\eps \to S \quad \mbox{ a.e. $(x,t) \in Q_T$}.
\end{equation}
\end{lemma}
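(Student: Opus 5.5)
The plan is to treat \eqref{eqstress} as a linear parabolic equation for $S$ with a zeroth-order coefficient and a source, both bounded uniformly in $\eps$. First I verify \eqref{eqstress}. Differentiating $S = \sigma(u) + v_x$ in time and using $u_t = v_x$ and $v_t = S_x$ gives
$$
S_t = \sigma'(u) v_x + v_{xt} = \sigma'(u)\big(S - \sigma(u)\big) + S_{xx}.
$$
The boundary conditions are $S(0,t)=S(1,t)=0$ by \eqref{vebc}.

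By Lemma \ref{lem1}, $|u^\eps|\le K$. Hence $c^\eps(x,t) := \sigma'(u^\eps)$ and $f^\eps := -\sigma'(u^\eps)\sigma(u^\eps)$ satisfy $|c^\eps|\le L$ and $|f^\eps|\le L$, with $L$ depending only on $K$ and $\sigma$. The equation then reads $S_t - S_{xx} - c^\eps S = f^\eps$.

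\textbf{$L^\infty$ bound.} Set $\tilde S = e^{-Lt} S$, which satisfies $\tilde S_t - \tilde S_{xx} + (L - c^\eps)\tilde S = e^{-Lt} f^\eps$ with $L - c^\eps \ge 0$. Comparison with the spatially constant supersolution $\|S_0\|_\infty + Lt$ (and its negative as a subsolution) gives $|\tilde S|\le \|S^\eps(\cdot,0)\|_\infty + LT$. The initial stress is bounded uniformly by \eqref{ub2data} and the continuity of $\sigma$, so \eqref{stressb} follows. If the maximum principle for weak solutions is delicate, I would argue instead by Duhamel with the Dirichlet heat kernel $G$. Since $\int G \le 1$, Gronwall applied to $\|S(t)\|_\infty \le \|S_0\|_\infty + \int_0^t (L\|S\|_\infty + L)\,ds$ gives the same conclusion.

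\textbf{Compactness.} Write $S^\eps = S^\eps_1 + S^\eps_2$, where $S^\eps_1 = G(t)*S^\eps_0$ solves the homogeneous heat equation and $S^\eps_2 = \int_0^t G(t-s)*h^\eps(s)\,ds$ with $h^\eps = c^\eps S^\eps + f^\eps$ bounded in $L^\infty(Q_T)$.
\begin{itemize}
\item[(a)] For $S_2^\eps$, maximal parabolic regularity gives $S_2^\eps \in_b L^q(W^{2,q})$ with $\partial_t S_2^\eps \in_b L^q(L^q)$ for every $q<\infty$. Aubin-Lions (stated above) then yields strong $L^2$ precompactness on $Q_T$.
\item[(b)] For $S_1^\eps$, the initial data $S_0^\eps$ merely oscillate, so compactness can hold only away from $t=0$. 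On $[\delta,T]$ the smoothing of the heat semigroup gives uniform $C^1$ bounds in $x$ and $t$, and Arzelà-Ascoli applies there.
\end{itemize}
A diagonal argument over $\delta\to 0$, combined with the uniform bound \eqref{stressb} to control the strip $(0,\delta)$, gives strong $L^2(Q_T)$ convergence along a subsequence. A further subsequence then converges a.e., which is \eqref{stressconv}.

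The main obstacle is that the initial data are only weakly convergent, so no compactness holds up to $t=0$. This is why the argument must split off the free heat evolution and use its instantaneous smoothing, rather than estimating $S^\eps$ globally. A secondary point is justifying \eqref{eqstress} and the comparison principle at the regularity of weak solutions, which I would handle by working with the mild (Duhamel) formulation throughout.
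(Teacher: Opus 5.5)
Your proposal is correct and follows the paper's route. The paper derives the linear parabolic equation \eqref{eqstress} for $S^\eps$, with the Dirichlet conditions \eqref{vebc} and uniformly bounded $L^\infty$ initial stress, and then simply appeals to ``standard parabolic theory''. Your argument supplies exactly the details that appeal leaves implicit: the comparison/Duhamel bound using $|u^\eps|\le K$ from Lemma \ref{lem1}, and the splitting into a free heat part (compact on $[\delta,T]$ by smoothing) plus a forced part (maximal regularity and Aubin--Lions).
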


\subsection{Kinetic functions and Young measures}
Consider a family $(u^\eps, v^\eps)$ of solutions to \eqref{vemodel}-\eqref{veid} and proceed to calculate the effective
equation describing the homogenization process. Under hypotheses \eqref{ub2data}  the family $(u^\eps, v^\eps)$ satisfies \eqref{ubound}, \eqref{stressb} and has the convergence properties
that, along a subsequence,
\begin{equation}\label{wklim}
\begin{aligned}
u^\eps \rightharpoonup u  \quad \mbox{wk-$\ast$  in $L^\infty (Q_T)$}
\\
v^\eps \to v  \qquad \qquad  \mbox{in  $L^2 (Q_T)$}
\\
S^\eps \to S \quad \mbox{ a.e. $(x,t) \in Q_T$}
\end{aligned}
\end{equation}
We introduce the Young measure associated with the sequence $\{ u^\eps\}$, namely a parametrized family of probability measures
$\nu_{t,x} (\lambda)$ that represents weak limits
\begin{equation}\label{defYM}
\beta(u^\eps) \rightharpoonup \langle \nu_{t,x} , \beta(\lambda) \rangle  = \int \beta (\lambda) \, d\nu_{t,x} (\lambda)  \qquad \mbox{wk-$\ast$  in $L^\infty (Q_T)$}
\end{equation}
for any $\beta(u)$ continuous. Due to the uniform bound \eqref{ubound} the support of the Young measure verifies $\supp \nu_{t,x} \subset [-K, K]$.

We start by computing
\begin{equation}\label{epsfamily}
\del_t \beta (u^\eps) = (S^\eps - \sigma(u^\eps) ) \beta^\prime (u^\eps)
\end{equation}
and using the Young measure representation and Lemma \ref{regstress} we derive the family of equations
\begin{equation}\label{family}
\del_t \Big\langle \nu_{t,x} , \beta(\lambda) \Big\rangle = \Big  \langle \nu_{t,x} ,  S \beta^\prime (\lambda)  - \sigma (\lambda) \beta^\prime (\lambda)  \Big \rangle 
\end{equation}
An interesting formal derivation of an effective equation that describes the entire family of equations \eqref{family} is given in  \cite{Serre91}.
It exploits the fact that oscillations are scalar and is based  on a rendition of the Young-measure via  rearrangements, an idea attributed to  L. Tartar.

We present here an approach based on the kinetic function. 
Let $\{ u^\eps \}$ be the family of scalar-valued functions. The bound \eqref{ubound} implies, along a subsequence if necessary, that
\begin{equation}\label{defF}
\charf_{u^\eps < \xi}  \rightharpoonup F(t,x,\xi) \qquad \mbox{wk-$\ast$  in $L^\infty (Q_T \times \R)$}
\end{equation}
to some $F \in L^\infty (Q_T \times \R)$.  The function $F$ is nondecreasing in $\xi$ and satisfies $F(t,x, -\infty) = 0$, $F(t,x,\infty)=1$.
 The role of $F$ is indicated by the formulas 
\begin{equation}\label{repform1}
\begin{aligned}
\beta(u^\eps) = - \int_{-\infty}^{\infty} \beta^\prime (\xi) \charf_{u^\eps < \xi} d\xi  \rightharpoonup - \int_{-\infty}^{\infty} \beta^\prime (\xi) F(t,x,\xi) d\xi
\end{aligned}
\end{equation}
\begin{equation}\label{repform2}
\begin{aligned}
\beta(u^\eps) = \int_{-\infty}^{\infty} \beta^\prime (\xi) \charf_{\xi < u^\eps} d\xi  \rightharpoonup  \int_{-\infty}^{\infty} \beta^\prime (\xi) (1 - F(t,x,\xi)) d\xi
\end{aligned}
\end{equation}
for $\beta \in C^1_c (\R)$. 
Comparing with \eqref{defYM} we see that
\begin{equation}\label{ibp}
\int \beta (\lambda) \, d\nu_{t,x} (\lambda) = - \int_{-\infty}^{\infty} \beta^\prime (\xi) F(t,x,\xi) d\xi =  \int_{-\infty}^{\infty} \beta^\prime (\xi) (1 - F(t,x,\xi)) d\xi
\quad \mbox{for $\beta \in C^1_c(\R)$}.
\end{equation}
Hence, in the sense of distributions
$$
\nu_{t,x} (\xi) = \del_{\xi } F(t,x,\xi) \, .
$$
An alternate way to introduce $F(t,x,\xi)$ is to realize that since $\nu_{t,x} (\lambda)$ is a Borel measure on $\R$ one may introduce its distribution function
\begin{equation}\label{defdistr}
F(t,x,\xi) =  \int_{(-\infty, \xi]} d\nu_{t,x}(\lambda)
\end{equation}
This choice picks the right continuous representative of the function $F_{t,x} (\xi)$. The integration by parts formula 
\cite[Thm 3.30]{b-Folland99} for BV functions provides an alternate viewpoint to \eqref{ibp}.

We use the notation $dF_{t,x} (\xi)$ for integration against the Young measure. Then the weak$-\ast$ limit in $L^\infty$ of a continuous function $f(u)$
can be computed via the formula
\begin{equation}\label{moments}
\begin{aligned}
\overline{f(u)} &= \int f(\lambda) d\nu_{t,x} (\lambda) = \int f(\xi) dF_{t,x} (\xi)
\\
&= \int_{(-\infty, 0]} f(\xi) dF_{t,x} (\xi) + \int_{(0, \infty)} f(\xi) d(F_{t,x} (\xi) - 1)
\\
&= f(0) - \int_{-\infty}^0 f^\prime (\xi) F(t,x,\xi) d\xi + \int_0^\infty f^\prime (\xi) ( 1 - F(t,x,\xi) ) d\xi 
\end{aligned}
\end{equation}
where we have used the integration by parts formula, with the right continuous representative of $F(\xi)$, and the facts that $\nu$ has compact support 
and $F(-\infty) =0$, $F(+\infty) =1$. Formula \eqref{moments} computes the weak limit $\overline{f(u)} = \mbox{wk-lim} f(u^n)$. Representation formulas of weak limits via kinetic functions are discussed in Section \ref{sec:repweaklim}.  Items 1. and 2. in Remark \ref{seno2} 
present the relations of the various available tools; note that $g_+$ in \eqref{seno21} and $F$ in \eqref{defF} are related by $g_+ = 1 -F$.

\subsubsection{The effective system}\label{sec:effs}
We compute two forms of the effective system describing the propagation of oscillations in \eqref{vemodel}-\eqref{veid}:

The first one is
\begin{equation}\label{effs1}
\begin{cases}
\del_t F + \del_\xi \Big ( \big ( v_x + \overline{\sigma(u)} - \sigma(\xi) \big ) F \Big ) + \sigma^\prime (\xi) F = 0   & \\[5pt]
\del_t v = \del_{xx} v + \del_x  \big (\overline{\sigma(u)} \big )    & \\[5pt]
S =  \overline{\sigma(u)}  + v_x   = \int \sigma(\xi) dF_{t,x} (\xi)  + v_x    & \\
\end{cases}
\quad (x,t) \in Q_T \, \; \; \xi \in \R
\end{equation}
with boundary and initial data
\begin{align}
S(0,t) &= S(1,t) = 0  \, , \quad   0 < t < T
\label{effbc1}
\\
v(0,x) &= v_0 (x) = \mbox{s-lim} v_0^\eps (x) \, , \quad F(0,x,\xi) = \mbox{wk-$\ast$ lim} \Big (  \charf_{ u_0^\eps (x) < \xi } \Big )
\label{effdata1}
\end{align}
where $v(t,x)$ and $F(t,x,\xi)$ are defined in \eqref{wklim} and \eqref{defF}.

To obtain \eqref{effs1}$_1$, start from \eqref{epsfamily} with $\beta(\cdot) \in C_c^1 (\R)$ and use the
representation formula \eqref{repform1} and the formula $\del_\xi \charf_{u^\eps < \xi} = \delta (\xi - u^\eps)$ to write
$$
\begin{aligned}
\del_t \Big ( - \int \beta^\prime(\xi) \charf_{u^\eps < \xi} d\xi \Big ) &= \int \big (S^\eps - \sigma(\xi) \big )   \beta^\prime(\xi) \delta(\xi - u^\eps) d\xi
\\
&= \int \beta^\prime(\xi)  \Big [ \del_\xi \Big ( \big(S^\eps - \sigma(\xi) \big ) \charf_{u^\eps < \xi} \Big )+ \sigma^\prime(\xi) \charf_{u^\eps < \xi} \Big ] d\xi
\end{aligned}
$$
Using an argument as  in the derivation of the generalized kinetic equation \eqref{genkineq} we conclude that in the sense of distributions
\begin{equation}
\del_t \charf_{u^\eps < \xi}  + \del_\xi \Big (  \big (S^\eps - \sigma(\xi) \big ) \charf_{u^\eps < \xi} \Big ) + \sigma^\prime(\xi) \charf_{u^\eps < \xi} = 0 \, .
\end{equation}
Next, using \eqref{defF}, \eqref{stressb} and \eqref{stressconv}, we pass to the limit $\eps \to 0$ and obtain \eqref{effs1}$_1$. The remaining
equations easily follow from \eqref{wklim}.

The equation \eqref{effs1}$_1$ may be written in the form
\begin{equation}
\del_t F + ( S-\sigma(\xi) ) \del_\xi F = 0
\end{equation}
and this writing is rigorous when $\sigma(\xi)$ is $C^1$ and $\del_\xi F$ is a measure. Let $f(u)$ be a $C^1$ function.
Using \eqref{moments}, we compute
\begin{align}
\del_t \overline{f(u)} &= \del_t \Big ( f(0) - \int_{-\infty}^0 f^\prime (\xi) F d\xi - \int_0^\infty f^\prime (\xi) (F-1) d\xi \Big )
\nonumber
\\
&= - \int_{-\infty}^0 f^\prime (\xi)  \del_t F d\xi - \int_0^\infty f^\prime (\xi) \del_t (F-1) d\xi
\nonumber
\\
&= \int_{-\infty}^0 (S - \sigma) f^\prime \del_\xi F d\xi + \int_0^\infty (S-\sigma) f^\prime \del_\xi F d\xi
\nonumber
\\
&= \int_{(-\infty, 0]} (S - \sigma) f^\prime  dF_{t,x} (\xi) + \int_{(0,\infty)} (S-\sigma) f^\prime dF_{t,x} (\xi)
\nonumber
\\[5pt]
&= \overline{(S-\sigma) f^\prime}
\label{formula3}
\end{align}

We apply this formula to the test function $f(u) = u$ and obtain
$$
\del_t u = \overline{(S-\sigma) } = v_x
$$
which provides a derivation of \eqref{vemodel}$_1$ for solutions of the system \eqref{effs1}. One may also use \eqref{formula3}
to derive an alternate form of the effective system. Namely, using \eqref{formula3} for $f(u) = \sigma(u)$, we compute
$$
\del_t S = v_{xt} + \del_t  \overline{\sigma(u)} = S_{xx} + \overline{(S-\sigma) \sigma^\prime}
$$

This leads to a second form for the effective system,
\begin{equation}\label{effs2}
\begin{cases}
\del_t F + \del_\xi \Big ( \big ( S - \sigma(\xi) \big ) F \Big ) + \sigma^\prime (\xi) F = 0   & \\[5pt]
\del_t S = \del_{xx} S + \int \sigma^\prime(\xi) (S - \sigma(\xi) ) dF_{t,x} (\xi)    & \\
\end{cases}
\; , \quad (x,t) \in Q_T \, \; \; \xi \in \R \, ,
\end{equation}
subject to boundary and initial data
\begin{align}
S(0,t) &= S(1,t) = 0  \, , \quad   0 < t < T \, , 
\label{effbc2}
\\
S(0,x) &= S_0 (x) = \mbox{wk lim}\big ( \sigma(u^\eps_0(x)) + \del_x v_0^\eps \big ) \, , \quad F(0,x,\xi) = \mbox{wk-$\ast$ lim} 
\Big (  \charf_{ u_0^\eps (x) < \xi } \Big ) \, .
\label{effdata2}
\end{align}
An alternative way for deriving \eqref{effs2}$_2$ is to start from \eqref{eqstress}  and pass to the limit $\eps \to 0$ using
\eqref{stressconv} and Young measures to represent the reaction term.

\medskip
We summarize:

\begin{theorem}\label{mainthm}
Let $(u^\eps, v^\eps)$ and $S^\eps$ be a family of solutions of \eqref{vemodel}-\eqref{veid} induced by data satisfying \eqref{ub2data}
with $\sigma(u)$  nonmonotone  satisfying  \eqref{boundW} and \eqref{hypgr}. Let $\big ( v, u, S \big )$ be defined via \eqref{wklim}
and $F(t,x,\xi)$ via \eqref{defF}. Then $F$ is connected to the Young measure via  \eqref{defdistr}  and $(F, v)$ satisfies the system \eqref{effs1}-\eqref{effdata1}.
As an alternative, $(F,S)$ can be determined via the system \eqref{effs2}-\eqref{effdata2}.
\end{theorem}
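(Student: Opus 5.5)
The proof assembles the a priori estimates of Lemma \ref{lem1} and Lemma \ref{regstress} with a kinetic reformulation of the strain equation, and then passes to the limit using that the stress converges strongly while the strain converges only weakly.

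\emph{Compactness and the distribution function.} First, \eqref{boundW}, \eqref{hypgr} and \eqref{ub2data} give, via Lemma \ref{lem1}, the uniform bound $|u^\eps|\le K$. Together with the energy bound \eqref{energyb} and the Aubin--Lions lemma, this yields the weak-$\ast$ convergence of $u^\eps$ and the strong $L^2$ convergence of $v^\eps$ in \eqref{wklim}. Lemma \ref{regstress} gives $|S^\eps|\le C$ and $S^\eps\to S$ a.e. Extracting a further subsequence, I obtain both a Young measure $\nu_{t,x}$ supported in $[-K,K]$ and the limit $F$ in \eqref{defF}. Testing with $\beta\in C^1_c(\R)$ in \eqref{repform1} and comparing with \eqref{defYM} gives \eqref{ibp}, so $\partial_\xi F=\nu_{t,x}$ in distributions. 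Since $0\le F\le 1$, $F$ is nondecreasing in $\xi$, $F=0$ for $\xi<-K$ and $F=1$ for $\xi>K$, the right-continuous representative is \eqref{defdistr}, exactly as in the integration-by-parts argument of \eqref{moments}.

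\emph{The $\eps$-level kinetic equation.} Next I derive
\[
\partial_t \charf_{u^\eps<\xi}+\partial_\xi\big((S^\eps-\sigma(\xi))\charf_{u^\eps<\xi}\big)+\sigma'(\xi)\charf_{u^\eps<\xi}=0
\]
in $\mathcal{D}'_{t,x,\xi}$. I start from \eqref{epsfamily}, which follows from $u_t=v_x=S-\sigma(u)$, test against $\varphi(t,x)\beta'(\xi)$, and rewrite $(S^\eps-\sigma(u^\eps))\beta'(u^\eps)$ as $\int (S^\eps-\sigma(\xi))\beta'(\xi)\,\partial_\xi\charf_{u^\eps<\xi}\,d\xi$. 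Integration by parts in $\xi$ is legitimate here because the integrand has compact support in $[-K,K]$. Density of the tensor products $\varphi\otimes\beta'$, as in the derivation of \eqref{genkineq}, then extends the identity to general test functions.

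\emph{Passage to the limit.} Every term in the kinetic equation is linear in $\charf_{u^\eps<\xi}$ except the product $S^\eps\charf_{u^\eps<\xi}$. For that term, $S^\eps\to S$ a.e. and boundedly, hence strongly in $L^1_{loc}$, while $\charf_{u^\eps<\xi}$ converges weakly-$\ast$. The product therefore converges to $SF$, and the limit equation is \eqref{effs1}$_1$ with $S=\overline{\sigma(u)}+v_x$. The momentum equation passes to the limit linearly, using $\sigma(u^\eps)\rightharpoonup\overline{\sigma(u)}=\int\sigma\,dF$ from \eqref{moments}; this gives \eqref{effs1}$_2$. The constitutive relation \eqref{effs1}$_3$ follows from $v^\eps_x\rightharpoonup v_x$. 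For the data and boundary conditions, the weak forms incorporating $F(0)$, $v_0$ and $S=0$ at $x=0,1$ pass to the limit in the same way, since $\|v_0^\eps\|_{W^{1,\infty}}$ bounded gives strong convergence of $v_0^\eps$ up to a subsequence.

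\emph{The $(F,S)$ formulation.} For \eqref{effs2}, one route is to apply the moment identity \eqref{formula3} with $f=\sigma$; this is justified by testing the limit kinetic equation with $\sigma'(\xi)\chi_{[-K-1,K+1]}$. The cleaner route is to pass to the limit directly in \eqref{eqstress}: the reaction term $\sigma'(u^\eps)(S^\eps-\sigma(u^\eps))$ is a continuous function of $u^\eps$ multiplied by a strongly convergent factor, so it converges to $\int\sigma'(\xi)(S-\sigma(\xi))\,dF$. The initial datum $S_0$ is taken as the weak limit of the $\eps$-level stress data.

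\emph{Main obstacle.} The crux is the strong convergence \eqref{stressconv} of the total stress, on which Lemma \ref{regstress} rests; everything else is linear bookkeeping. Since that lemma is given, the remaining delicate point is to justify the time derivative and initial trace in the distributional kinetic equation. I would handle this by keeping the initial term in the weak formulation with test functions in $C^\infty_c([0,T)\times(0,1)\times\R)$, rather than trying to identify the trace of $F$ afterwards.
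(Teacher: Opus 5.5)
Your proposal is correct and follows essentially the same route as the paper. You derive the $\eps$-level kinetic equation $\partial_t \charf_{u^\eps<\xi}+\partial_\xi\big((S^\eps-\sigma(\xi))\charf_{u^\eps<\xi}\big)+\sigma'(\xi)\charf_{u^\eps<\xi}=0$ from \eqref{epsfamily} via \eqref{repform1}, pass to the limit using the strong convergence \eqref{stressconv} against the weak-$\ast$ convergence \eqref{defF}, and obtain \eqref{effs2} either through the moment identity \eqref{formula3} with $f=\sigma$ or by the direct limit in \eqref{eqstress}, which are exactly the two routes the paper indicates.
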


Given $(F,v)$ the function $u$ is determined by the moment
\begin{equation}\label{formu}
u = - \int_{-\infty}^0 F(t,x,\xi) d\xi - \int_0^\infty (F(t,x,\xi) -1) d\xi
\end{equation}
and satisfies $u_t = v_x$. Alternatively, given $(F, S)$ determined via \eqref{effs2}-\eqref{effdata2}, 
the function $u$ is again determined by \eqref{formu}. We wish to define now $v(t,x)$
by the relation
$$
v_t = S_x \, ,  \quad v_x = S - \overline{\sigma(u)}
$$
Due to  \eqref{effs2}$_2$ and the formula $\del_t  \overline{\sigma(u)} = \overline{(S-\sigma)\sigma^\prime}$ (following from \eqref{formula3}),
these equations define $v$ and satisfy the desired equations $v_t = S_x$ and $u_t = v_x$.

\subsubsection{The behavior of $F$ across interfaces for prescribed stress.}
We saw  the effective response is captured by $(F,S)$ determined by the coupled system \eqref{effs2}.
The system consists of a kinetic equation, whose coefficients depends on the total stress $S$, coupled with
an equation for the total stress with a right hand side evaluated via a moment. (Alternatively, one may work with \eqref{effs1}.)

To gain some intuition, we consider the effect of the kinetic equation when the total stress $S(t,x)$ is given.
The distribution function $F(t,x,\xi)$ then satisfies the initial value problem
\begin{equation}\label{effivp}
\begin{aligned}
\del_t F + ( S-\sigma(\xi) ) \del_\xi F = 0
\\
F(0, x,\xi) = F_0(x,\xi) \, .
\end{aligned}
\end{equation}
Recall that $F$ is almost everywhere equal to the distribution function of the Young measure,
that is $F$ satisfies \eqref{defdistr}.  In particular, $F$ will be nondecreasing, $0 \le F \le 1$, and if
$\supp \nu_{(t,x)} \subset [\xi_1 , \xi_2]$ then $F = 0$ for $\xi < \xi_1$, $F = 1$ for $\xi \ge 2$ and 
$0 \le F \le 1$ for $\xi \in (\xi_1, \xi_2)$.
%$$
%F(t,x,\xi) = 
%\begin{cases}  
%0  & \xi <  \xi_1  \\
%0 \le F(t,x, \xi ) \le 1   & \xi_1 < \xi < \xi_2  \\
%1  & \xi_2 \le \xi  \\
%\end{cases}
%$$

The initial data $F_0$ are generated by 
$$
F_0 (x,\xi) = \mbox{wk-$\ast$ lim} \Big (  \charf_{ u_0^\eps (x) < \xi } \Big ) \, . 
$$
This means that for $x$-fixed the function $F_0(x, \cdot)$ is the distribution function that describes the statistics
of the oscillating sequence  of values $\{ u_0^\eps (x) \}$.  When we have strong convergence $u_0^\eps \to u_0$ then $F_0$ is a Heaviside 
function with a single jump  at $u_0$, and the corresponding Young measure is $\nu_{(0, x)} = \delta (\xi - u_0(x))$.

Consider the problem \eqref{effivp} and to simplify study the case that $S = S(x)$ is independent of $t$. Fix $x_0$ and let
$S_0 = S(x_0)$. The equation
$$
\del_t F + ( S_0 -\sigma(\xi) ) \del_\xi F = 0
$$
is described through its characteristic system
$$
\begin{aligned}
\frac{ \del \xi}{\del t} &= S_0 - \sigma (\xi)
\\
\frac{\del F}{\del t} &= 0
\\
\xi (0) &= \xi_0 \, , \quad F(0) = F_0
\end{aligned}
$$
This implies that $F(t)  = const = F_0$ on the characteristic $\xi(t ; \xi_0)$ emanating from $\xi_0$. 

Suppose now that the function $\sigma(u)$ is nonmonotone and of the form presented in Fig. \ref{fig:graph}, and let $T$ and $S$ be the top and bottom of
the stress curve. Fix $S_0$ and let $a$, $b$, $c$ the three intersections that give the equilibria of the ordinary differential equation
$\frac{ \del \xi}{\del t} = S_0 - \sigma (\xi)$. Note that $a$ and $c$ are stable equilibria while $b$ is unstable. Moreover, the phase space is split
to initial data $\xi_0 < b$ where the corresponding characteristic $\xi(t ; \xi_0) \to a$ as $t \to \infty$ and initial data $\xi_0 > b$ where the characteristic
$\xi(t ; \xi_0) \to c$ as $t \to \infty$.
\begin{figure}[htbp]
\begin{center}
\includegraphics[scale=0.45]{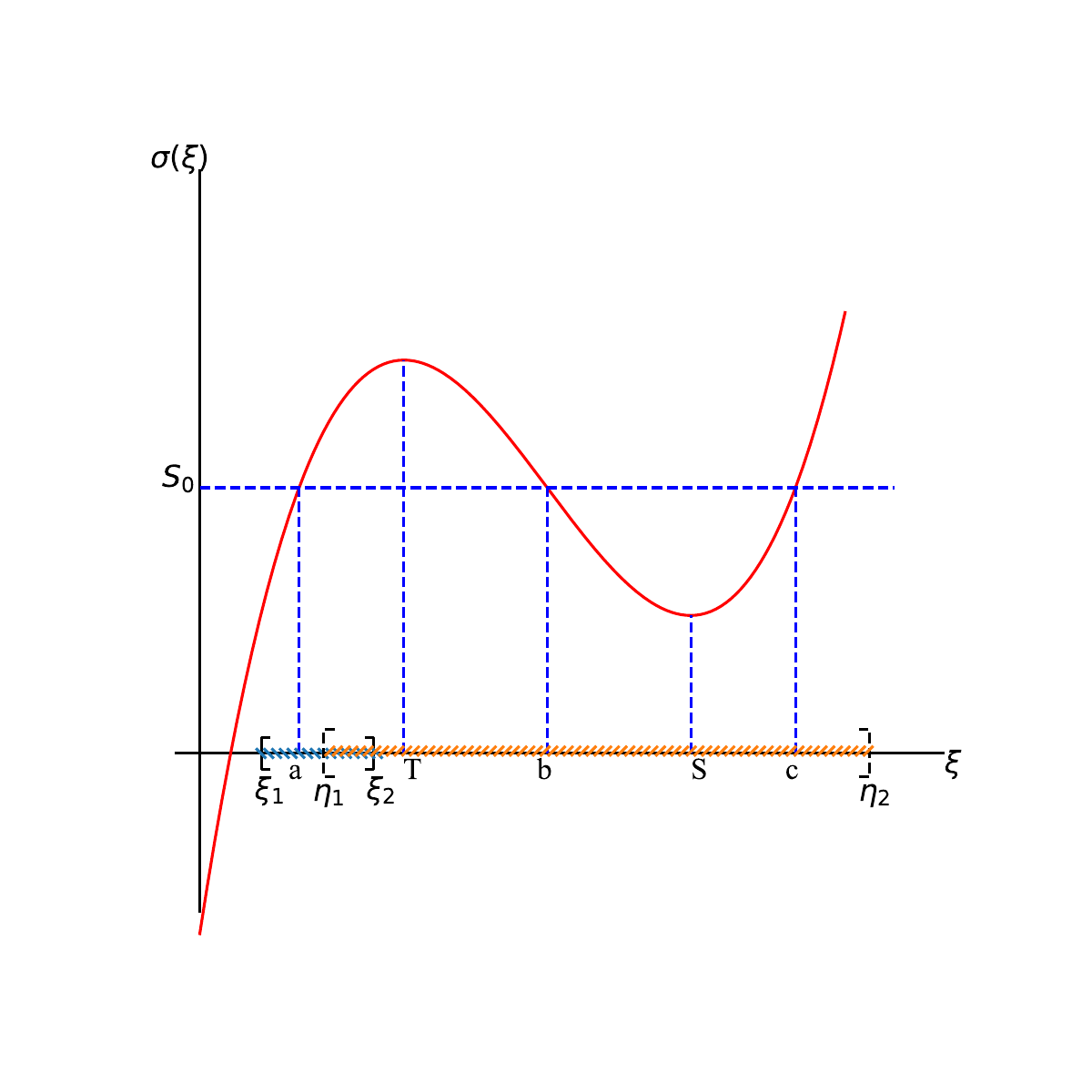}
\vspace{-0.8cm}
\caption{Placement of initial data relative to the constitutive function}
\label{fig:graph}
\end{center}
\end{figure}

This provides a particularly simple solution to the initial value problem \eqref{effivp} for smooth $F_0(x,\xi)$ that gives insight 
on the behavior of the distribution function. Suppose that for $x_0$ fixed and $S_0 = S(x_0)$ the initial data are of the form
$$
F(x_0 ,\xi) = 
\begin{cases}  
0  & \xi <  \xi_1  \\
A(\xi)    & \xi_1 < \xi < \xi_2  \\
1  & \xi_2 \le \xi  \\
\end{cases}
$$
where $\xi_1 < \xi_2 < b$ and $A(\xi)$ is a continuous function increasing from $A(\xi_1) =0$ to $A(\xi_2)=1$. The reader is referred to 
Fig.\ref{fig:graph} for the placement of the support of the initial oscillations $[\xi_1, \xi_2]$. Analyzing the characteristics
we see that the solution $F(t, x_0, \xi)$ then tends as $t \to \infty$ to the Heaviside function $H ( \xi - a)$, where $H(\cdot)$ is the function
$H(\xi) = 0$ for $\xi < 0$ and $H(\xi) = 1$ for $\xi \ge 0$. In that case the limiting solution 
has a single phase.

On the other hand if the initial data are
$$
F(x_0 ,\xi) = 
\begin{cases}  
0  & \xi <  \eta_1  \\
B(\xi)    & \eta_1 < \xi < \eta_2  \\
1  & \eta_2 \le \xi  \\
\end{cases}
$$  
where $\eta_1  < b < \eta_2$ and $B(\xi)$ is continuous and increasing from $B(\eta_1) =0$ to $B(\eta_2)=1$, see Fig.\ref{fig:graph}.
The solution then splits into two parts that eventually in infinite time develop to two distinct phases, and $F(t, x_0, \xi)$ will
converge to the sum of two Heaviside functions $\alpha_1 H(\xi -a) + \alpha_2 H(\xi -c)$ supported on the two stable equilibria $a$, $c$.
The associated Young measure will approach in large times the measure $\mu = \alpha_1 \delta_a + \alpha_2 \delta_c$.

\section{Homogenization for the compressible Navier-Stokes system}\label{sec:homcomNS}

We focus next on the homogenization problem for the compressible Navier-Stokes system.
The problem was suggested and formally developed in Serre \cite{Serre91}. Hillairet \cite{Hillairet07} provided a
rigorous description of the problem that we now outline.

\subsection{The homogenization problem}
For the isentropic gas law $p(\rho) = \rho^\gamma$ with $\gamma > \tfrac{3}{2}$ a class of
global finite-energy weak solutions $(\rho^n, u^n)$ are considered that are uniformly bounded
in energy, that is either like in the work of Lions,
\begin{equation}\label{hypdata1}
\sup_{t \in (0,T)} E  \big[ \rho^n u^n, \rho^n \big] (t) < E_0  \quad n \in \N
\end{equation}
or as in the works of Feireisl 
\begin{equation}\label{hypdata2}
\limsup_{t \to 0+} E  \big[ \rho^n u^n, \rho^n \big] (t) < E_0  \quad n \in \N \, .
\end{equation}
Under these assumptions, using the Lions or the Feireisl existence theory, there exists a pair
$(\rho, u) : Q_T \to (\R^+ , \R^3)$ 
\begin{equation}\label{regularity}
\rho \in L^\infty \big ( (0,T) ; L^\gamma (\Omega) \big ) \, , \quad u \in L^2 \big ( (0,T) ; H^1_0 (\Omega) \big )
\end{equation}
that satisfies 
\begin{equation}\label{wklimNS}
\begin{aligned}
&\rho^n \rightharpoonup \rho  \quad \mbox{ wk-$\ast$ in $L^\infty \big ( (0,T) ; L^\gamma (\Omega) \big )$}
\\
&u^n \rightharpoonup u \quad \mbox{ wk in $L^2 \big ( (0,T) ; H^1_0 (\Omega) \big )$} 
\end{aligned}
\end{equation}

Hillairet \cite{Hillairet07} considers the problem of derivation of equations describing the effective
response of the sequence $(\rho^n, u^n)$. The problem is addressed  using Young measure solutions. 
The idea is to derive equations for
the weak limits of renormalized solutions (see part (iii) of Definition \ref{def:fews}). It is proved:

\begin{theorem}\label{thmhomoCNS} \cite{Hillairet07}
Assume $(\rho^n, u^n)$ satisfies hypothesis \eqref{hypdata1} or \eqref{hypdata2} and $(\rho, u)$ with regularity
\eqref{regularity} is the weak limit \eqref{wklimNS}. There exists a measurable family of parametrized probability measures 
$\big \{ \nu_{(t,x)}  \big \}_{(t,x) \in Q_T}$ such that
\begin{itemize}
\item[(a)] $\rho$, $\nu$ satisfy
$$
\rho = \langle \nu_{(t,x)}  , \lambda \rangle \, , \quad \overline{p(\rho)} := \langle \nu_{(t,x)}  , p(\lambda) \rangle = \int p(\lambda) d\nu_{(t,x)}(\lambda)
$$
\item[(b)] for any $b \in C([0, \infty))$, smooth and with compact support
\begin{equation}\label{renormYM}
\begin{aligned}
&\del_t \langle \nu_{(t,x)} , b(\lambda) \rangle + \div \Big (\big \langle \nu_{(t,x)} , b(\lambda) \big\rangle u \Big )
+ \big \langle \nu_{(t,x)} , \lambda b^\prime(\lambda) - b(\lambda) \big\rangle \div u
\\
&\qquad = \tfrac{1}{\lambda + 2 \mu} \Big ( \big \langle \nu_{(t,x)} , \lambda b^\prime(\lambda) - b(\lambda) \big\rangle \overline{p(\rho)}  
-  \big \langle \nu_{(t,x)} , (\lambda b^\prime(\lambda) - b(\lambda)) p(\lambda) \big\rangle \Big )
\end{aligned}
\end{equation}
in $\dis (Q_T)$.
\item[(c)]  $(\rho, u)$ satisfies the equations
$$
\begin{aligned}
\rho_t + \div (\rho u) &= 0
\\
(\rho u)_t + \div (\rho u \otimes u ) + \nabla \; \overline{p(\rho)} &= \mu \triangle u + (\lambda + \mu) \nabla (\div u)
\end{aligned}
\qquad \mbox{in $\dis (Q_T)$} \, .
$$
\end{itemize}
\end{theorem}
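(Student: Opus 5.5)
The plan is to pass to the limit in the renormalized equations satisfied by $\rho^n$, using the effective viscous flux to remove the product $b(\rho^n)\,\div u^n$, which has no a priori compactness. First, the Young measure $\nu_{(t,x)}$ is obtained from the fundamental theorem of Young measures \cite{Ball89} applied to $\{\rho^n\}$. Since $\rho^n$ is bounded in $L^\infty(L^\gamma)$ and $p(\rho)=\rho^\gamma$, we would add the Lions--Feireisl improved integrability $\rho^n\in_b L^{\gamma+\theta}_{loc}$, obtained by testing the momentum equation with $\nabla\Delta^{-1}(\rho^n)^\theta$. This makes $\{p(\rho^n)\}$ equi-integrable, and Proposition \ref{prop65}-type arguments then give part (a): $\rho=\langle\nu,\lambda\rangle$ and $\overline{p(\rho)}=\langle\nu,p(\lambda)\rangle$.

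For part (c), we pass to the limit in the continuity and momentum equations. Aubin--Lions applied to $\rho^n$ in $C(L^\gamma_{weak})$, together with the $L^2(H^1_0)$ bound on $u^n$, gives $\rho^nu^n\rightharpoonup\rho u$. The same Aubin--Lions argument applied to $\rho^n u^n$ (its time derivative is bounded in a negative Sobolev space) gives $\rho^nu^n\otimes u^n\rightharpoonup\rho u\otimes u$. This is the standard Lions argument, and the only change is that the pressure limit stays as $\overline{p(\rho)}$.

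For part (b), we start from the renormalized equation (iii) for $b(\rho^n)$. The products $b(\rho^n)u^n$ converge by the compactness just described. The key term is $(\rho^nb'(\rho^n)-b(\rho^n))\div u^n$. Here we use the effective viscous flux $z^n=(\lambda+2\mu)\div u^n-p(\rho^n)$ and Lions' weak continuity lemma: for bounded $B(\rho^n)$,
\[
\overline{B(\rho)\big((\lambda+2\mu)\div u-p(\rho)\big)}=\overline{B(\rho)}\,\big((\lambda+2\mu)\div u-\overline{p(\rho)}\big).
\]
This lemma is proved by testing the momentum equation with $\phi\nabla\Delta^{-1}(B(\rho^n))$, using the commutator (div--curl / Coifman--Meyer) lemma to handle $\rho^n u^n\otimes u^n$ against $\mathcal{R}B(\rho^n)$, and comparing with the analogous identity for the limit system. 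Applying it with $B=\lambda b'-b$ gives
\[
\overline{(\lambda b'-b)\div u}=\langle\nu,\lambda b'-b\rangle\,\div u+\tfrac{1}{\lambda+2\mu}\Big(\langle\nu,(\lambda b'-b)p\rangle-\langle\nu,\lambda b'-b\rangle\,\overline{p(\rho)}\Big).
\]
Substituting this into the limit of the renormalized equation yields \eqref{renormYM}.

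The main obstacle is this weak continuity of the effective viscous flux. It needs the commutator estimates and careful localization, and in the Dirichlet case it needs cutoffs near $\partial\Omega$. Also, $\rho b'(\rho)-b(\rho)$ is bounded only because $b'$ vanishes for large $\xi$, which is what allows $B$ to be treated as a bounded test function. I would first carry out the argument on the torus and then localize.
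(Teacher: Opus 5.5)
Your proposal is correct and follows essentially the same route as the paper's sketch of Hillairet's proof. The shared ingredients are: improved integrability of $p(\rho^n)$ (Lions/Feireisl), so that $\overline{p(\rho)}$ is represented by the Young measure; the convergences $\rho^n u^n \rightharpoonup \rho u$ and $\rho^n u^n\otimes u^n \rightharpoonup \rho u\otimes u$ for part (c); and $b(\rho^n)u^n \rightharpoonup \overline{b(\rho)}\,u$ together with the weak continuity $\overline{B(\rho)z}=\overline{B(\rho)}\,\overline{z}$ of the effective viscous flux, applied to $B(\rho)=\rho b'(\rho)-b(\rho)$ in the limit of the renormalized equation, for part (b).
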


The proof of Hillairet \cite{Hillairet07} uses the key steps needed in the existence theory of Lions and Feireisl.
As the author states the techniques developed in the existence theory \cite{b-Lions98, FNP02, b-Feireisl04}
enable to justify rigorously the formal calculus leading to Theorem \ref{thmhomoCNS}.  
Beyond providing the proof of that theorem, a main novelty in  \cite{Hillairet07} is the study of a particular type of Young measure solutions of the form
$$
\nu_{(t,x)} = \sum_{i=1}^n \alpha_i (t,x) \delta_{\rho_i (t,x)}
$$
that consists on $n$ non-interacting phases and to explore the properties of such multi-phase flows. This is an interesting connection
between multiphase flows and the homogenization problem; see  \cite{Hillairet07} for theoretical results on the existence of such solutions.

The derivation of the homogenized system can be found in \cite{Hillairet07} with some
elements  summarized below. First, it is proved that
$$
\begin{aligned}
\rho^n u^n \rightharpoonup \rho u \qquad \qquad &\mbox{wk in $L^2 ( (0,T) ; L^{\frac{2\gamma}{\gamma +1}} (\Omega) \big )$}  
\\
\rho^n u^n \otimes u^n  \rightharpoonup \rho u \otimes u \qquad &\mbox{wk in $\dis (Q_T)$}
\end{aligned}
$$
which yields part (c).

The approximate solutions satisfy the renormalized equation for the density
\begin{equation}\label{apprenentr}
\del_t b(\rho^n) + \div \big( b(\rho^n) u^n \big) = \big ( b(\rho^n) - \rho^n b^\prime (\rho^n) \big ) \div u^n  \quad \mbox{in $\dis(Q_T)$}
\end{equation}
for $b(\cdot)$ continuous functions on $\RR$ that satisfy $b(\rho) = const$ for $\rho$ sufficiently large. Young measures are introduced to
represent the weak limits
$$
b(\rho^n) \rightharpoonup \overline{b(\rho)} = \int b (\lambda) d\nu_{(t,x)} (\lambda)  \quad \mbox{weak-$\ast$ in $L^\infty$}.
$$
The passage to the limit of \eqref{apprenentr} is based on three ingredients that are part of the existence theory of compressible Navier-Stokes.
An a-priori estimate that $p(\rho^n)$ is uniformly bounded in $L^{\alpha_c} (Q_T)$ for some $\alpha_c > 1$. This estimate, achieved via harmonic analysis techniques by Lions or using the Bogovskii operator by Feireisl, 
 allows to represent weak limits for  functions $b(\rho)$ that grow up to $\rho^\gamma$
via Young measures,  in particular 
showing that $p(\rho^n) \rightharpoonup \overline{p(\rho)}$, see \cite{Feireisl01} \cite[Sec 3.1, 4.2, 4.3]{b-NS04}.

The remaining ingredients are weak convergence properties that pertain the theory of compressible Navier-Stokes,
like the property
\begin{equation}\label{weaku}
b(\rho^n) u^n \rightharpoonup \big ( \overline{b(\rho)} \big ) u \quad \mbox{in $\dis(Q_T)$}.
\end{equation}
and the special role of the effective viscous flux
$z^n = p(\rho^n) - (\lambda + 2 \mu) \div(u^n)$ regarding its weak convergence properties: namely, that 
following application of compensated compactness \cite{Lions93} \cite{Feireisl01}, \cite[Sec 8.2]{BJ18},
\begin{equation}\label{weakz}
\Big ( p(\rho^n) - (\lambda + 2 \mu) \div(u^n) \Big ) b(\rho^n) \rightharpoonup \big (\overline{p(\rho)} - (\lambda + 2 \mu) \div u \big ) \overline{b(\rho)}
\quad \mbox{in $\dis (Q_T)$.}
\end{equation}
Using these ingredients allows to derive equation \eqref{renormYM} for the Young measures. We refer to 
\cite{Hillairet07} for the proof and note that the full analysis is valid for the isentropic gas 
law $p(\rho) = \rho^\gamma$ with $\gamma > \frac{3}{2}$.

\subsection{An approximate effective kinetic equation}\label{sec:effmethod1}
Equation \eqref{renormYM} is not easily amenable to analysis. However, as the only oscillating variable is the density
which is scalar, the distribution functions of the Young measure will be employed to study oscillations.
This is in analogy to the kinetic description of scalar conservation laws in Section \ref{sec:genkinsoln} or the
description of oscillations for phase transitions in one-dimensional viscoelasticity discussed in Section \ref{sec:osci}.

For compressible Navier-Stokes of a  barotropic gas, Plotnikov and Sokolowski \cite[Ch 7]{b-PS12}
exploit the correspondence between Young measures and distribution functions (in the scalar context) to derive a kinetic equation 
for weak limits of the renormalized family  \eqref{renormYM}. They employ it to derive an alternate proof of Lions's theorem 
on propagation of compactness for a barotropic gas.

 It is expedient to put the problem of describing oscillations for the density in a wider context, and allow the pressure $p(\rho)$ to be non-monotone, thus 
 including models like Van-der-Waals gases. 
 We refer to Bresch-Jabin \cite{BJ18} for a discussion of  global existence for general constitutive laws and the compactness properties of 
 approximate solutions.  We use the same assumptions on the pressure
 \begin{equation}
 \begin{aligned}
 p(\rho) \in C^1([0, \infty)) \, , \quad p(0) = 0
 \\
 \frac{1}{a}  \rho^{\gamma -1} - b \le p^\prime (\rho) \le a \rho^{\gamma -1} + b
 \end{aligned}
 \tag {HP}
 \end{equation}
 with $\gamma > \frac{3}{2}$ and some constants $a, b > 0$ in dimension $d=3$.

 The first objective is to obtain an approximate kinetic equation and then  survey the conditions necessary to pass to the limit and derive a
 kinetic model for the distribution function. The equation
\begin{equation}
\del_t b(\rho^n) + \div \big (  b(\rho^n) u^n \big ) + \big ( \rho^n b^\prime (\rho^n)  - b(\rho^n) \big ) \div u^n  = 0
\end{equation}
is expressed in the form
\begin{equation}\label{approx}
\del_t b(\rho^n) + \div \big( b(\rho^n) u^n \big)
- \tfrac{1}{\lambda + 2 \mu} \big ( \rho^n b^\prime (\rho^n)  - b(\rho^n) \big )  z^n 
+ \tfrac{1}{\lambda + 2 \mu}  \big ( \rho^n b^\prime (\rho^n)  - b(\rho^n)  \big ) p(\rho^n) = 0 \, ,
\end{equation}
understood in the sense of distributions $\dis(Q_T)$, using the effective viscous flux
\begin{equation}\label{effvf}
z^n = p(\rho^n) - (\lambda + 2 \mu) \div(u^n) \, .
\end{equation}

The equation \eqref{approx} will be represented for a class of renormalized solutions. Let $\theta(\xi) \in C^1_c (\R)$ and consider functions 
$b(\rho)$ defined by
$$
b(\rho) = \int_{-\infty}^\rho \theta(\xi) d\xi = \int_\R \charf_{\xi < \rho} \theta(\xi) d\xi = \int_\R \big ( 1 - \charf_{\rho < \xi} \big ) \theta(\xi) d\xi
$$
Note that $\theta = b^\prime \in C_c^1 (\R)$ and that if $\supp \theta \subset [-m, m]$ then $b(\rho) = 0$ for $\rho < -m$ and $b(\rho) = const$ for $\rho > m$.

The goal is to represent the last two terms in \eqref{approx}. Consider the first term $I^n_1$ and observe
that at least formally
$$
\begin{aligned}
I_1^n := \big ( \rho^n b^\prime (\rho^n)  - b(\rho^n) \big ) z^n
&= \int_\R ( \xi b^\prime (\xi) - b(\xi) ) \delta (\xi - \rho^n) \,  z^n d\xi
\\
&= \int_\R ( \xi b^\prime (\xi) - b(\xi) ) \big ( \del_\xi ( \charf_{\rho^n < \xi} - 1) \big ) z^n d\xi
\\
&= - \int_\R \xi b^{\prime \prime} (\xi) \big( \charf_{\rho^n < \xi} -1 \big) z^n d\xi 
\\
&= \int_\R b^\prime (\xi) \del_\xi \Big ( \xi ( \charf_{\rho^n < \xi} - 1) \, z^n \Big ) d\xi
\end{aligned}
$$
To check the validity of this equation and interpret it appropriately as a distribution we express $b(\xi)$ in terms of 
$\theta (\xi) = b^\prime(\xi)$ and write
$$
\begin{aligned}
I_1^n &= \big ( \rho^n \theta (\rho^n) - \int_{-\infty}^{\rho^n} \theta(s) ds \big ) z^n
\\
&= \Big ( \int_{-\infty}^{\rho^n}  s \theta^\prime (s) ds \Big ) z^n
\\
&= \int_\R \charf_{s < \rho^n} (s \theta^\prime (s) ) z^n \, ds
\\
&= \big \langle \theta^\prime (\xi) , \xi \charf_{\xi < \rho^n} z^n \big \rangle
\\
&= \Big \langle \theta (\xi) , - \del_\xi \big ( \xi \big ( 1 - \charf_{\rho^n < \xi} \big)  z^n \big ) \Big \rangle
\end{aligned}
$$
where the last two formulas are interpreted in distributions.

Consider next the term $I_2^n$ given by
$$
\begin{aligned}
I_2^n &:= \big ( \rho^n b^\prime (\rho^n)  - b(\rho^n) \big ) p(\rho^n) 
\\
&= \big ( \rho^n \theta (\rho^n) - \int_{-\infty}^{\rho^n} \theta(s) ds \big ) p(\rho^n)
\\
&= \Big ( \int_{-\infty}^{\rho^n}  s \theta^\prime (s) ds \Big ) p(\rho^n)
\\
&= \int_{-\infty}^{+\infty} \charf_{\tau < \rho^n}  \frac{d}{d\tau} \Big [ \Big ( \int_{-\infty}^{\tau}  s \theta^\prime (s) ds \Big ) p(\tau) \Big ] d\tau
\\
&= \int_\R \theta^\prime (\tau) \tau p(\tau) \big ( 1 - \charf_{\rho^n < \tau}  \big ) d\tau
+ \int_\R \left ( \int_\R \charf_{s<\tau} s \theta^\prime (s) ds \right ) p^\prime(\tau) \big (1 - \charf_{\rho^n < \tau } \big ) d\tau
\\
&= J_1^n + J_2^n
\end{aligned}
$$
The term $J_1^n$ is expressed as a distribution
$$
\begin{aligned}
J_1^n &= \int_\R \theta^\prime(\xi) \, \xi p(\xi) \big (1- \charf_{\rho^n < \xi} \big ) \, d\xi
\\
&= - \Big \langle \theta (\xi) , \del_\xi \Big ( \xi p(\xi) \big (1 - \charf_{\rho^n < \xi} \big )  \Big ) \Big \rangle
\end{aligned}
$$
Similarly, the term $J_2^n$ gives
$$
\begin{aligned}
J_2^n &= \int_\R \theta^\prime(\xi) \, \xi \left (\int_{-\infty}^\infty \charf_{\xi < \zeta} \charf_{\zeta < \rho^n } p^\prime (\zeta)   \, d\zeta \right )d\xi
\\
&= - \Big \langle \theta (\xi) , \del_\xi \Big ( \xi \int_\xi^\infty p^\prime(\zeta) \big (1- \charf_{\rho^n < \zeta} \big ) d\zeta \Big ) \Big \rangle
\end{aligned}
$$
Combining the two formulas we have
$$
I_2^n = - \Big \langle \theta (\xi) , \del_\xi 
\left ( \xi p(\xi) \big ( 1 - \charf_{\rho^n < \xi} \big ) + \xi \int_\xi^\infty p^\prime(\zeta) \big ( 1 - \charf_{\rho^n < \zeta} \big ) d\zeta \right )
 \Big \rangle
$$

We conclude that for $\theta \in C^\infty_c (\R)$ we have
$$
\begin{aligned}
 &\del_t \int_\R \theta (\xi) \big ( 1 - \charf_{\rho^n < \xi} \big ) d\xi  + \div_x \Big ( \int_\R \theta (\xi) u^n \big ( 1 - \charf_{\rho^n < \xi} \big ) d\xi \Big )
\\
&\qquad  + \tfrac{1}{\lambda + 2 \mu} \big \langle \theta(\xi) , \del_\xi \big ( \xi \big ( 1 - \charf_{\rho^n < \xi} \big ) z^n \big ) \Big \rangle
\\
&\qquad - \tfrac{1}{\lambda + 2 \mu}  \Big \langle \theta (\xi) , \del_\xi 
\Big ( \xi p(\xi) \big ( 1 - \charf_{\rho^n < \xi} \big ) + \xi \int_\xi^\infty p^\prime(\zeta) \big (1 - \charf_{\rho^n < \zeta} \big ) d\zeta \Big )
 \Big \rangle
 = 0
\end{aligned} 
$$
in distributions $\dis (Q_T)$. The formula can be explicitly expressed for a test function $\varphi (t,x)  \in C^\infty_c ( [0, T) \times \Omega)$ in
the form
$$
\begin{aligned}
&- \iint \int_\R (\del_t \varphi ) \theta \big ( 1 - \charf_{\rho^n < \xi} \big ) + (\nabla \varphi ) \theta \cdot u^n \big ( 1 - \charf_{\rho^n < \xi} \big )
\\
&- \iint \int_\R  \varphi (\del_\xi \theta )\tfrac{1}{\lambda + 2 \mu} \big ( \xi \big (1 - \charf_{\rho^n < \xi} \big ) z^n \big ) 
\\
&+ \iint \int_\R  \varphi (\del_\xi \theta )\tfrac{1}{\lambda + 2 \mu} \Big ( \xi p(\xi) \big ( 1 - \charf_{\rho^n < \xi} \big ) + \xi \int_\xi^\infty p^\prime(\zeta) 
\big (1 -  \charf_{\rho^n < \zeta}  \big ) d\zeta \Big )
= 0
\end{aligned}
$$

An argument as in the derivation of \eqref{genkineq} shows that $\charf_{\rho^n < \xi} $ satisfies the initial
value problem
\begin{equation}\label{approxeqn}
\begin{aligned}
\del_t \big ( 1 - \charf_{\rho^n < \xi} \big )  + \div \Big ( u^n \big ( 1 - \charf_{\rho^n < \xi} \big ) \Big ) 
+ \tfrac{1}{\lambda + 2 \mu}  \del_\xi \Big ( \xi \big ( 1 - \charf_{\rho^n < \xi} \big ) z^n \Big ) 
\\
- \tfrac{1}{\lambda + 2 \mu} \del_\xi \left ( \xi p(\xi) \big ( 1 - \charf_{\rho^n < \xi} \big ) 
+ \xi \int_\xi^\infty p^\prime(\zeta) \big ( 1 - \charf_{\rho^n < \xi} \big )  d\zeta \right )
&= 0
 \\
 \lim_{t \to 0} \charf_{\rho^n (t,x) < \xi} &= \charf_{\rho_0^n (x) < \xi}
\end{aligned}
\end{equation}
The equation captures the nonlinear structure of the renormalized solution via a duality argument acting on the test function $\theta$.

At the point of this writing the implications of the approximate kinetic equation have not been fully analyzed, whether analytically or as tools for
the computation of propagating oscillations.
Results from conservation laws indicate that such equations lead to better understanding of oscillations, 
like the cancellation of oscillations properties obtained in \cite{b-Perthame02}, \cite{PT00} and Section \ref{sec:kineticcl},
or like the propagation of compactness in \cite[Ch 7]{b-PS12} for density oscillations in compressible Navier-Stokes for a barotropic gas. 
This belief is reinforced by the analysis presented in Section \ref{sec:osci} for homogenization in
one-dimensional viscoelastic models of phase transitions.

\subsection{An alternative approach to obtain an effective kinetic equation}\label{sec:effmethod2}

In this second approach we introduce  functions $b(\rho) = \rho g(\rho)$ and use $g^\prime(\rho) \in C_c^1 (\R)$ as a test function.
The equation for the renormalized approximate solutions \eqref{apprenentr} is expressed in the form
$$
\del_t \big ( \rho^n g(\rho^n) \big ) + \div \big ( u^n \rho^n g(\rho^n) \big ) + (\rho^n )^2 g^\prime (\rho^n) \div u^n = 0
$$
In turn, with the objective to exploit the special properties of the effective viscous flux \eqref{effvf}, this is written in the form
\begin{equation}\label{appren2}
\begin{aligned}
&\del_t \big ( \rho^n g(\rho^n) \big ) + \div \big ( u^n \rho^n g(\rho^n) \big ) 
\\
&\qquad = \tfrac{1}{\lambda + 2 \mu} (\rho^n )^2 g^\prime (\rho^n) z^n  - \tfrac{1}{\lambda + 2 \mu}  (\rho^n )^2 g^\prime (\rho^n) p(\rho^n)
\\
&\qquad = \tfrac{1}{\lambda + 2 \mu}  I_1^n - \tfrac{1}{\lambda + 2 \mu}  I_2^n
\end{aligned}
\end{equation}

We next use the representation formula 
$$
g(\rho) = \int_{-\infty}^\rho  g^\prime (\xi)  d\xi = \int_\R   g^\prime (\xi) \charf_{\xi < \rho}  d\xi \, .
$$ 
We will think of $g^\prime \in C_c^1 (\R)$ as a test function. Note that for $\supp g^\prime \subset [-m, m]$ the associated $g$ satisfy
$g(\rho) = 0$ for $\rho < -m$ and $g(\rho) = const$ for $\rho > m$.

Along sequences of solutions $\{ \rho^n \}$ we have
$$
\rho^n g(\rho^n) = \int_\R  \rho^n  \charf_{ \xi < \rho^n } g^\prime (\xi) d\xi
$$
Define now the function $H(t,x,\xi)$ to be the weak limit of
\begin{equation}\label{defH}
\rho^n \charf_{\rho^n  < \xi} \rightharpoonup H
\end{equation}
If the measure $H$ is absolutely continuous with respect to $\rho dx$ then we can visualize
$H = \rho G$ where $G(t,x,\xi)$ is the Radon-Nikodym derivative.

The goal is to derive an approximate equation for the quantity $\rho^n \charf_{\xi < \rho^n}$ and to use the function 
$H \in L^\infty ( [0,\infty) ; L^\gamma (Q_T) )$ as the quantity describing the effective oscillations.
In order to express \eqref{appren2}, observe that at least formally
$I_1^n$ and $I_2^n$ may be expressed via the representation formulas
$$
\begin{aligned}
I_1^n  = (\rho^n )^2 g^\prime (\rho^n) z^n &= \int_\R \xi g^\prime (\xi) \rho^n \delta (\xi - \rho^n ) z^n \, d\xi
\\
&= \int_\R g^\prime (\xi)  \Big ( - \xi  \del_\xi \big ( \rho^n \charf_{\xi < \rho^n} \big )   z^n \Big ) \, d\xi
\end{aligned}
$$
$$
\begin{aligned}
I_2^n  = (\rho^n )^2 g^\prime (\rho^n) p(\rho^n) &= \int_\R \rho^n \xi g^\prime (\xi) p(\xi)  \del_\xi \big ( - \charf_{\xi < \rho^n } \big )  \, d\xi
\\
&= \int_\R g^\prime (\xi)  \Big ( - \xi  p(\xi) \del_\xi \big ( \rho^n \charf_{\xi < \rho^n} \big )  \Big ) \, d\xi
\end{aligned}
$$

To validate the meaning of the first formula use the representation
$$
\rho g^\prime (\rho) = \int_\R \charf_{\xi < \rho} \big ( \xi g^\prime \big )^\prime d\xi
$$
to interpret $I_1^n$ as a distribution via the formulas
$$
\begin{aligned}
(\rho^n )^2 g^\prime (\rho^n) z^n &= \int_\R \rho^n  \charf_{\xi < \rho^n} z^n ( \xi g^\prime \big )^\prime d\xi
\\
&= \big\langle  \big( \xi g^\prime \big )^\prime , \rho^n  \charf_{\xi < \rho^n} z^n \big\rangle
\\
&= \big\langle  g^\prime (\xi) , -\xi \del_\xi \big ( \rho^n  \charf_{\xi < \rho^n} z^n  \big ) \big\rangle
\end{aligned}
$$
Similarly, for $p(\cdot)$ smooth,
$$
(\rho^n )^2 p(\rho^n) g^\prime (\rho^n) = \big\langle  g^\prime (\xi) , -\xi p(\xi) \del_\xi \big ( \rho^n  \charf_{\xi < \rho^n}  \big ) \big\rangle
$$

Combining all together we obtain
$$
\begin{aligned}
&\del_t \left (  \int_\R  g^\prime \rho^n  \charf_{\xi< \rho^n } d\xi \right ) + \div \left ( \int_\R  g^\prime u^n \rho^n  \charf_{\xi < \rho^n } d\xi \right )
\\
&\quad =  \tfrac{1}{\lambda + 2 \mu}  \Big \langle g^\prime ,  - \xi \del_\xi \big ( \rho^n \charf_{\xi < \rho^n} z^n \big ) 
+ \xi p(\xi) \del_\xi \big ( \rho^n \charf_{\xi < \rho^n}  \big ) 
  \Big \rangle
\end{aligned}
$$
holds in $\dis_{t,x}$ for any $g^\prime \in C_c^\infty(\R)$. We thus conclude that the quantity $\rho^n \charf_{\xi < \rho^n}$ satisfies the
transport equation
\begin{equation}
\label{apprkineq}
\begin{aligned}
&\del_t \Big ( \rho^n  \charf_{\xi < \rho^n}   \Big ) + \div \Big ( u^n \rho^n  \charf_{\xi < \rho^n}  \Big) 
%\\
%&\quad 
+  \tfrac{1}{\lambda + 2 \mu} 
\Big ( \xi \del_\xi \big ( \rho^n \charf_{\xi < \rho^n} z^n \big ) 
- \xi p(\xi) \del_\xi \big ( \rho^n \charf_{\xi < \rho^n} \big ) \Big )
 = 0
\end{aligned} 
\end{equation}

\medskip

To pass to the limit, we need a set of properties regarding weak limits,
namely the property \eqref{defH} that defines the function $H(t,x,\xi)$ and the properties
\begin{align}
u^n \big ( \rho^n  \charf_{\rho^n  < \xi} \big ) \rightharpoonup  u H 
\\
z^n \big ( \rho^n  \charf_{\rho^n  < \xi} \big ) \rightharpoonup  z H  
\end{align}
which are the analogs of \eqref{weaku}, \eqref{weakz} in the present context.
We arrive at the effective kinetic equation
$$
\del_t H + div ( u H ) +  \tfrac{1}{\lambda + 2 \mu} \Big ( \xi \del_\xi (H z) - \xi p(\xi)  \del_\xi  H  \Big ) = 0) 
$$
Finally, taking account that $z = \overline{p(\rho)} - (\lambda + 2 \mu) \div u$ where $\overline{p(\rho)} = \mbox{wk-lim} \, p(\rho^n)$,
the transport equation for $H$ simplifies to
\begin{equation}
\del_t H + div ( u H ) - \tfrac{1}{\lambda + 2 \mu} \xi \Big ( p(\xi) - \overline{p(\rho)}  + (\lambda + 2\mu) \div u \Big )  \del_\xi H = 0
\end{equation}
The last term is well defined in the sense of distributions if $p(\xi)$ is sufficiently smooth. Otherwise it will be interpreted via integrations
by parts.

The momentum equation remains the same in Theorem \ref{thmhomoCNS}.
In summary, the limiting system describing the effective behavior reads
\begin{equation}\label{homoS2}
\begin{aligned}
\del_t H + div ( u H ) &= \tfrac{1}{\lambda + 2 \mu}  \xi  (\del_\xi H ) \Big ( p(\xi) - \overline{p(\rho)} + (\lambda + 2 \mu) \div u 
\Big ) 
\\
\del_t (\rho u) + \div (\rho u \otimes u ) + \nabla \; \overline{p(\rho)} &= \mu \triangle u + (\lambda + \mu) \nabla (\div u)
\\
H(0, x, \xi) &= \mbox{wk-lim} \Big ( \rho^n_0 \charf_{\rho^n_0 (x) < \xi} \Big )
\\
u_0 (x) &= \mbox{s-lim} ( u^n_0 (x) )
\end{aligned}
\end{equation}

%\vfil\eject
%\bigskip
\bigskip

{\bf Acknowledgement}.
These Lecture Notes were partly prepared in the Fall 2025 semester while the author was in
residence at the Simons Laufer Mathematical Sciences Institute in Berkeley,
supported in part by the National Science Foundation under Grant No. DMS-2424139
and in part by KAUST baseline funds, No BAS/1/1652-01-01.
I would like to thank the Institute for their hospitality and  Professors 
{\sc Fran\c{c}ois Golse} and {\sc Pierre-Emmanuel Jabin} for helpful discussions.

%\bigskip
%{\bf Statements and Declarations}
%
%{\bf Data availability} 
%The data supporting the findings of this study are available within the paper,
%
%{\bf Competing Interests}
%The author has no relevant competing interests to declare.

%{\bf Funding} 
%Research supported by KAUST baseline funds, No BAS/1/1652-01-01

%\bibliographystyle{plain} % We choose the "plain" reference style 
%\bibliography{biblioosc}

%\bibliographystyle{abbrv} % We choose the "abbrv" reference style = abbreviates initials and journals
%\bibliography{biblioosc}

%
\end{document}